\documentclass{elsarticle}

\usepackage{amsfonts}
\usepackage{amsthm}
\usepackage{amsmath}
\usepackage{amssymb}
\usepackage{mathtools} 
\usepackage{xcolor}
\usepackage{soul} 
\usepackage{mathrsfs}

\usepackage[top=1in, bottom=1in, left=1in, right=1in]{geometry}

\newcommand{\norm}[1]{\left \lVert#1\right \rVert}
\newtheorem{theorem}{Theorem}[section]
\newtheorem{lemma}{Lemma}[section]

\theoremstyle{definition}
\newtheorem{definition}{Definition}[section]

\theoremstyle{remark}
\newtheorem{remark}{Remark}[section]

\numberwithin{equation}{section}

\begin{document}
	
		\begin{frontmatter}
	
	\title{Perturbation theory for fractional evolution equations in a Banach space}
	
	\date{}
	

	\author[]{Arzu Ahmadova}
	\ead{arzu.ahmadova@emu.edu.tr}
	\author[]{Ismail T. Huseynov}
	\ead{ismail.huseynov@emu.edu.tr}
	\author[]{Nazim I. Mahmudov}
	\ead{nazim.mahmudov@emu.edu.tr}
	\cortext[cor1]{Corresponding author}
	
	\address{Department of Mathematics, Eastern Mediterranean University, Mersin 10, 99628, T.R. North Cyprus, Turkey}
	
	

	\begin{abstract}
		\noindent A strong inspiration for studying perturbation theory for fractional evolution equations comes from the fact that they have proven to be useful tools in modeling many physical processes. In this paper, we study fractional evolution equations of order $\alpha\in (1,2]$ associated with the infinitesimal generator of an operator fractional cosine function generated by bounded time-dependent perturbations in a Banach space. We show that the abstract fractional Cauchy problem associated with the infinitesimal generator $A$ of a strongly continuous fractional cosine function remains uniformly well-posed under bounded time-dependent perturbation of $A$. We also provide some necessary special cases.
	\end{abstract}
\begin{keyword}
	Perturbation theory, fractional evolution equation, strongly continuous fractional cosine and sine families, well-posedness
\end{keyword}  
	
		\end{frontmatter}
	\section{Introduction}\label{Sec:intro}
	
	Some partial differential equations arising in the transverse motion of an extensible beam \cite{Fit}, the vibration of hinged bars \cite{W-K}, damped McKean-Vlasov equations \cite{MM} and many other physical phenomena can be formulated as the second-order abstract differential equations with cosine families in infinite-dimensional spaces.
	The most fundamental and extensive work on cosine families of linear operators is that of Fattorini in \cite{Fat-1,Fat-2}.

	Moreover, Travis and Webb \cite{TW1} have investigated the following semilinear second-order Cauchy problem in a Banach space $\mathbb{X}$ via the theory of strongly continuous cosine families of linear bounded operators:
	
	\begin{equation}\label{abstract-1}
		\begin{cases}
			u^{\prime \prime}(t)= Au(t) + f(t,u(t),u^{\prime}(t)), \quad t \in \mathbb{R},\\
			u(t_{0})=x \in \mathbb{X}, \quad u^{\prime}(t_{0})=y \in \mathbb{X}.
		\end{cases}
	\end{equation} 
	A systematic and general treatment of the abstract semilinear second-order Cauchy problem from the stand point of existence, uniqueness, continuous dependence and smoothness of solutions have been provided by Travis and Webb in \cite{TW1,TW2}. Also, Bochenek in \cite{bochenek} has investigated the existence of a solution of the initial value problem for the second-order abstract evolution equation under more general hypotheses than in \cite{TW1}.
	The fractional analogue of the same problem \eqref{abstract-1} with Caputo time-derivative is considered by Kexue \cite{kexue1} and general results are attained by using the theory of fractional cosine families.
	
	The existence and uniqueness of the following inhomogeneous $\alpha$-order Cauchy problem is studied by Kexue et al. \cite{kexue2} in a Banach space $\mathbb{X}$:
	
		\begin{equation*}
		\begin{cases*}
				\left( \prescript{}{}{\mathcal{D}^{\alpha}_{t}}u\right) (t)=A u(t)+f(t), \quad t >0, \\
				\left( \mathcal{I}^{2-\alpha}u\right) (0)=0, \quad  \left( \mathcal{I}^{1-\alpha}u\right) (0)=x \in \mathbb{X},
		\end{cases*}
	\end{equation*}
	 where  $1<\alpha<2$, $A: \mathscr{D}(A)\subseteq \mathbb{X} \to \mathbb{X}$ is a closed, densely-defined linear operator, $\mathscr{D}(A)$ is the domain of $A$ endowed with the graph norm $\norm{x}_{\mathscr{D}(A)}=\norm{x}+\norm{Ax}$, and $\prescript{}{}{\mathcal{D}^{\alpha}_{t}}$ is the $\alpha$-order Riemann-Liouville fractional derivative operator. Furthermore, in \cite{chen} Chen and Li have established the properties of three kinds of resolvent families defined by purely algebraic equations which extend the classical cosine functional equation. In \cite{henriquez}, Henr\'{i}quez et al. have studied the differentiability of mild solutions for a class of fractional abstract Cauchy problems of order $\alpha\in (1,2)$.

	The perturbation theory has long been a very useful tool in the hands of both the analyst and physicist. A considerable amount of research has been done on the perturbation theory of linear operators in a Banach space, principally by Phillips \cite{philips}, Lutz \cite{lutz}, Travis and Webb \cite{TW3}.
	
	In \cite{philips}, Philips has obtained the closed-form of the classical solution to the following perturbed Cauchy problem in a Banach space $\mathbb{X}$:
		\begin{equation*}
		\begin{cases*}
			u^{\prime}(t)= (A+B(t))u(t) + f(t), \quad t >0,\\
			u(0)=x \in \mathscr{D}(A),
		\end{cases*}
	\end{equation*}  
where $B:\mathbb{R_{+}}\to \mathcal{L}(\mathbb{X})$, $f:\mathbb{R_{+}}\to \mathbb{X}$ be strongly continuously differentiable functions on $\mathbb{R_{+}}$.

In \cite{lutz}, Lutz has investigated for the first time the effect on the homogeneous Cauchy problem associated with the infinitesimal generator of an operator cosine function produced by bounded time-dependent perturbations: 

\begin{equation}\label{secondorder}
	\begin{cases}
		u^{\prime \prime}(t)= (A+B(t))u(t), \quad t \in \mathbb{R},\\
		u(0)=x \in \mathscr{D}(A), \quad u^{\prime}(0)=y \in \mathscr{D}(A).
	\end{cases}
\end{equation}  
 Lin \cite{lin} has studied the time-dependent perturbation theory for abstract second-order evolution equations and their applications to partial differential equation in the form of the following problem:

\begin{equation}\label{abstract2}
	\begin{cases}
		u^{\prime \prime}(t)= \left( A+B(t)\right) u(t) + f(t), \quad t \in (0,\tau],\quad \tau >0,\\
		u(0)=u_{0},\quad u^{\prime}(0)=v_{0}.
	\end{cases}
\end{equation}
On the other hand, Serizawa and Watanabe \cite{watanabe2} have investigated the same problem stated in \eqref{abstract2} with Neumann boundary condition. 

The fractional analogue of the abstract problem \eqref{secondorder} has been established by Bazhlekova in \cite{bazhlekova,bazhlekova2}. Bazhlekova \cite{bazhlekova2} has proposed uniquely determined a classical solution of the following homogeneous Cauchy problem:
\begin{equation*}
	\begin{cases*}
		\left( \prescript{C}{}{\mathcal{D}^{\alpha}_{t}}u\right) (t)= (A+B(t))u(t), \quad t > 0,\\
		u(0)=x \in \mathscr{D}(A), \quad u^{\prime}(0)=0,
	\end{cases*}
\end{equation*} 
where $B:\mathbb{R_{+}}\to \mathcal{L}(\mathbb{X})$ is a continuous function on $\mathbb{R_{+}}$.
	
	

    Motivated by above articles, we consider an abstract Cauchy problem of fractional order $\alpha\in (1,2]$ in a Banach space $\mathbb{X}$:
	
	\begin{equation}\label{eq-2}
	\left( \prescript{C}{}{\mathcal{D}^{\alpha}_{t}}u\right) (t)=\left(A+B(t) \right) u(t)+f(t), \quad t>0,
	\end{equation}
	with initial conditions
	
	\begin{align}\label{initial}
	&u(0)=x\in \mathscr{D}(A), \qquad  u^{\prime}(0)=y\in \mathscr{D}(A),
	\end{align}
which is uniformly well-posed. Here $\prescript{C}{}{\mathcal{D}^{\alpha}_{t}}$ is a fractional-order Caputo differentiation operator and \\ $A:\mathscr{D}(A)\subseteq \mathbb{X} \to \mathbb{X}$ be a densely-defined, closed linear operator in a Banach space $\mathbb{X}$. Let $\mathscr{D}(A)\subseteq \mathbb{X}$ denote the
	domain of $A$ which will be specified later, $B:\mathbb{R}_{+}\to \mathcal{L}(\mathbb{X})$ and $f:\mathbb{R}_{+}\to \mathbb{X}$ be strongly continuously differentiable functions on $\mathbb{R_{+}}$.
	
	Therefore, the aim of this paper is to develop the perturbation theory to study fractional-order abstract Cauchy problems of order $\alpha\in (1,2]$ which generalizes classical case \eqref{secondorder}. The pioneering work on \eqref{eq-2}-\eqref{initial} in classical sense was done by Phillips \cite{philips}, Lutz \cite{lutz} and our development follows these approaches. To the best of our knowledge, the presented results extend those of \cite{lutz,bazhlekova2} in several aspects. First, we allow for more general case, in that we assume $u^{\prime}(0)=y\in \mathscr{D}(A)$, i.e., the second initial value is non-zero and as a result of this choice we derive the corresponding perturbed fractional sine family $S_{\alpha}(t;A+B)$, however this condition is zero in \cite{bazhlekova2}. Second, we consider both homogeneous and inhomogeneous abstract Cauchy problems for fractional evolution equations, however Bazhlekova \cite{bazhlekova2} and Lutz \cite{lutz} have studied only homogeneous cases in fractional and classical senses, respectively. Furthermore, we have attained new closed-form of solutions whenever $A,B \in \mathcal{L}(\mathbb{X})$ are non-permutable linear bounded operators. Thus, in this work it is shown that the inhomogeneous Cauchy problem with the infinitesimal generator $A$ of a strongly continuous fractional cosine families remains uniformly well-posed under bounded time-dependent perturbations.

	
	In this paper, we study perturbation properties of problem \eqref{eq-2} with initial conditions \eqref{initial}, generalizing some facts concerning fractional cosine operator functions
	and also distinguishing some new features. Therefore, the structure of our paper is organized as follows. In Section \ref{prel}, we provide main definitions and relations from fractional calculus, fractional evolution equations and operator theory. Section \ref{pertub} is devoted to studying homogeneous and inhomogeneous abstract fractional-order perturbed Cauchy problem in a Banach space $\mathbb{X}$. In Section \ref{sec4}, we investigate some necessary particular cases of the main problem \eqref{eq-2}-\eqref{initial} and we give some comparisons between existing and our results.

\section{Preliminaries}\label{prel}
We embark on this section by briefly introducing the essential structure of fractional calculus, fractional evolution equations and operator theory for linear operators. For the more salient details on these matters, see the textbooks \cite{kilbas,podlubny,engel,kato,watanabe,voigt,miyadera}.

Let $\mathbb{R_{+}}=[0,\infty)$ and $\mathbb{N}$ denote the set of natural numbers with $\mathbb{N}_{0}=\mathbb{N}\cup\left\lbrace 0\right\rbrace$.
Let $\mathbb{\mathbb{X}}$ be Banach space equipped with the norm $\norm{\cdot}$. 
We donote by $\mathcal{L}(\mathbb{X})$ the Banach algebra of all bounded linear operators on $\mathbb{X}$ and becomes a Banach space with regard to the norm $\norm{T}=\sup\left\lbrace \norm{Tx}: \norm{x} \leq 1\right\rbrace $, for any $T\in \mathcal{L}(\mathbb{X})$. The identity and zero operators on $\mathbb{X}$ are denoted by $I\in \mathcal{L}(\mathbb{X})$ and $0\in \mathcal{L}(\mathbb{X})$, respectively.

We will use the following functional spaces \cite{hille-philips} through the paper:

\begin{itemize}
	
	\item
	$\mathbb{C}\left(\mathbb{R_{+}},\mathbb{X}\right)$ denotes the Banach space of continuous $\mathbb{X}$-valued functions $g:\mathbb{R_{+}}\to \mathbb{X} $ equipped with an infinity norm $\|g\|_{\infty}=\sup\limits_{t\in \mathbb{R_{+}}}\norm{g(t)}$;
	
	\item
	$\mathbb{L}^{1}(\mathbb{R_{+}},\mathbb{X})$ denotes a Banach space of equivalence classes of all measurable $\mathbb{X}$-valued functions \\ $g:\mathbb{R_{+}}\to \mathbb{X}$ which are Bochner integrable and normed by $\norm{g}_{\mathbb{L}_{1}}=\int\limits_{0}^{T}\norm{g(s)}\mathrm{d}s<\infty$; 
	
	\item
	$\mathbb{C}^{n}(\mathbb{R_{+}},\mathbb{X})$, $n\in \mathbb{N}$ denotes the Banach space of $n$-times continuously differentiable $\mathbb{X}$-valued functions defined by 
	\begin{equation*}
	\mathbb{C}^{n}(\mathbb{R_{+}},\mathbb{X})=\left\lbrace g\in\mathbb{C}^{n}(\mathbb{R_{+}},\mathbb{X}): g^{(n)} \in\mathbb{C}(\mathbb{R_{+}},\mathbb{X}), n \in \mathbb{N}\right\rbrace
	\end{equation*}
	and equipped with an infinity norm $\|g\|_{\infty}=\sum\limits_{k=0}^{n}\sup\limits_{t\in \mathbb{R_{+}}}\norm{g^{(k)}(t)}$.
	In addition, $\mathbb{C}^{n}(\mathbb{R_{+}},\mathbb{X})\subset \mathbb{C}(\mathbb{R_{+}},\mathbb{X})$, $n\in \mathbb{N}$.
\end{itemize}

Let $f:\mathbb{R_{+}}\to\mathbb{R}$ be an integrable scalar-valued function and let $g:\mathbb{R_{+}}\to \mathbb{X}$ be a continuous $\mathbb{X}$-valued function. We denote by 
\begin{equation*}
(f\ast g)(t)=\int\limits_{0}^{t}f(t-s)g(s)\mathrm{d}s, \quad t \in \mathbb{R_{+}},
\end{equation*}
the convolution operator of $f$ and $g$. Furthermore, if $T:\mathbb{R_{+}}\to\mathscr{L}(\mathbb{X})$ is a strongly continuous operator-valued map, we define
\begin{equation*}
(T\ast g)(t)=\int\limits_{0}^{t}T(t-s)g(s)\mathrm{d}s=\int\limits_{0}^{t}T(s)g(t-s)\mathrm{d}s, \quad t \in \mathbb{R_{+}}.
\end{equation*}

\begin{lemma}\cite{engel}
Let $\mathbb{J}$ be some real interval and $P,Q: \mathbb{J}\to \mathcal{L}(\mathbb{X})$ be two strongly continuous operator-valued functions defined on $\mathbb{J}$. Furthermore, suppose that $P(\cdot)x:\mathbb{J}\to \mathbb{X}$ and $Q(\cdot)x:\mathbb{J}\to \mathbb{X}$ are differentiable for all $x\in \mathscr{D}$ for some subspace $\mathscr{D}$ of $\mathbb{X}$, which is invariant under $Q$. Then $(PQ)(\cdot)x:\mathbb{J}\to \mathbb{X}$, defined by $(PQ)(t)x\coloneqq P(t)Q(t)x$, is differentiable for every $x\in \mathscr{D}$ and 
\begin{equation}\label{lem3.1}
	\frac{\mathrm{d}}{\mathrm{d}t}\Big(P(\cdot)Q(\cdot)x\Big)(t_{0})=\frac{\mathrm{d}}{\mathrm{d}t}\Big(P(\cdot)Q(t_{0})x\Big)(t_{0})+P(t_{0})\Big(\frac{\mathrm{d}}{\mathrm{d}t}Q(\cdot)x\Big)(t_{0}).
\end{equation} 
\end{lemma}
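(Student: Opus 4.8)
The plan is to work directly from the definition of the derivative, forming the difference quotient of $(PQ)(\cdot)x$ at $t_{0}$ and splitting it by inserting an intermediate term. For $x\in\mathscr{D}$ and small $h\neq 0$ with $t_{0}+h\in\mathbb{J}$, I would write
\begin{align*}
\frac{1}{h}\Big( P(t_{0}+h)Q(t_{0}+h)x - P(t_{0})Q(t_{0})x \Big)
&= P(t_{0}+h)\,\frac{1}{h}\Big( Q(t_{0}+h)x - Q(t_{0})x \Big) \\
&\quad + \frac{1}{h}\Big( P(t_{0}+h) - P(t_{0})\Big)Q(t_{0})x.
\end{align*}
The whole argument then reduces to computing the limit of each of the two summands as $h\to 0$.

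The second summand is the easier one. Since $\mathscr{D}$ is invariant under $Q$ and $x\in\mathscr{D}$, the vector $y:=Q(t_{0})x$ again lies in $\mathscr{D}$, so by hypothesis $P(\cdot)y$ is differentiable at $t_{0}$. Hence the second summand converges to $\frac{\mathrm{d}}{\mathrm{d}t}\big(P(\cdot)Q(t_{0})x\big)(t_{0})$, which is exactly the first term on the right-hand side of \eqref{lem3.1}. This is precisely where the invariance of $\mathscr{D}$ under $Q$ is used, and it explains why the statement is phrased with the frozen argument $Q(t_{0})x$.

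The first summand is where the genuine work lies. Set $w_{h}:=h^{-1}\big(Q(t_{0}+h)x - Q(t_{0})x\big)$ and $w:=\frac{\mathrm{d}}{\mathrm{d}t}\big(Q(\cdot)x\big)(t_{0})$, so that $w_{h}\to w$ by differentiability of $Q(\cdot)x$. I would then decompose
\begin{equation*}
P(t_{0}+h)w_{h} - P(t_{0})w = P(t_{0}+h)(w_{h}-w) + \big(P(t_{0}+h)-P(t_{0})\big)w.
\end{equation*}
The last term tends to $0$ by strong continuity of $P$ applied to the fixed vector $w$. For the remaining term I estimate $\norm{P(t_{0}+h)(w_{h}-w)}\le\norm{P(t_{0}+h)}\,\norm{w_{h}-w}$, and since $\norm{w_{h}-w}\to 0$ it suffices to bound $\norm{P(t_{0}+h)}$ for $h$ near $0$.

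The main obstacle is exactly this uniform boundedness of the operator norms $\norm{P(t_{0}+h)}$, which does not follow from pointwise strong continuity alone; instead I would invoke the uniform boundedness principle. Fixing a compact neighborhood $K\subseteq\mathbb{J}$ of $t_{0}$, for each fixed $z\in\mathbb{X}$ the map $t\mapsto P(t)z$ is continuous, hence $\sup_{t\in K}\norm{P(t)z}<\infty$; by the Banach--Steinhaus theorem this yields $\sup_{t\in K}\norm{P(t)}<\infty$. Consequently $\norm{P(t_{0}+h)}$ stays bounded for all sufficiently small $h$, so $P(t_{0}+h)(w_{h}-w)\to 0$ and the first summand converges to $P(t_{0})w=P(t_{0})\frac{\mathrm{d}}{\mathrm{d}t}\big(Q(\cdot)x\big)(t_{0})$. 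Adding the two limits establishes both the differentiability of $(PQ)(\cdot)x$ at $t_{0}$ and the Leibniz formula \eqref{lem3.1}, completing the proof.
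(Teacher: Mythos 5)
Your proof is correct, and it is the standard argument for this product rule (the paper itself gives no proof of this lemma --- it is quoted verbatim from the Engel--Nagel reference, so there is nothing internal to compare against). Your decomposition of the difference quotient, the use of the invariance of $\mathscr{D}$ under $Q$ to differentiate $P(\cdot)Q(t_{0})x$, and in particular the appeal to the Banach--Steinhaus theorem to get local uniform boundedness of $\norm{P(t_{0}+h)}$ from strong continuity --- which is indeed the one non-trivial point --- are exactly the steps of the textbook proof.
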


 \begin{definition}\cite{kilbas,podlubny}\label{RLI}
	The Riemann-Liouville fractional integral of  order $\alpha > 0$ for a function \\ $f\in  \mathbb{L}^{1}\left( \mathbb{R_{+}}, \mathbb{X}\right) $  is defined by
	\begin{equation}
	\prescript{}{}(\mathcal{I}^{\alpha}_{t}f)(t)=\left( g_{\alpha}\ast f\right) (t)=\frac{1}{\Gamma(\alpha)}\int\limits_0^t(t-s)^{\alpha-1}f(s)\,\mathrm{d}s \\, \quad t>0.
	\end{equation}
By setting $\left( \mathcal{I}^{0}_{t}f\right) (t)=f(t)$.	For the sake of brevity, we use the following notation for $\alpha\geq 0$:
	
	\begin{align*}
	g_{\alpha}(t)=
	\begin{cases*}
	\frac{t^{\alpha-1}}{\Gamma(\alpha)}, \quad t>0,\\
	0, \qquad t \leq 0,
	\end{cases*}
	\end{align*}
where $\Gamma:\mathbb{R_{+}}\to \mathbb{R}$ is the well-known Euler's gamma function. Note that $g_{0}(t)=0$, since $\frac{1}{\Gamma(0)}=0$, $t\in \mathbb{R}$. These
	functions satisfy the semigroup property:
	\begin{equation}\label{galpha}
	(g_{\alpha}\ast g_{\beta})(t)=g_{\alpha+\beta}(t), \quad t \in \mathbb{R}.
	\end{equation}
	
	Moreover, the Riemann-Liouville fractional integral operators $\left\lbrace \mathcal{I}^{\alpha}_{t} \right\rbrace_{\alpha \geq 0}$ satisfy the semigroup property
	\begin{equation}
	\mathcal{I}^{\alpha}_{t}\mathcal{I}^{\beta}_{t}=\mathcal{I}^{\alpha+\beta}_{t}, \quad \alpha,\beta \geq 0,\quad t>0.
	\end{equation} 
\end{definition}
\begin{definition}\cite{kilbas,podlubny}\label{RLD}
	The Riemann-Liouville fractional derivative of order $n-1<\alpha\leq n$ for a function \\ $\ f\in  \mathbb{C}^{n}\left( \mathbb{R_{+}}, \mathbb{X}\right) $ is  defined by 
	\begin{equation}
	\left( \prescript{}{}{\mathcal{D}}^{\alpha}_{t}f\right)(t)=\prescript{}{}{\mathcal{D}^{n}_{t}}\left( g_{n-\alpha}\ast f\right) (t)=\prescript{}{}{\mathcal{D}^{n}_{t}}\left( \mathcal{I}^{n-\alpha}_{t}f\right) (t),\quad t>0,
	\end{equation}
	where $\prescript{}{}{\mathcal{D}^{n}_{t}}=\frac{\mathrm{d}^{n}}{\mathrm{d}t^{n}}$ for $n\in \mathbb{N}$.

	We sometimes adopt the convention
	\begin{equation*} 
	\left( \prescript{}{}{\mathcal{D}}^{-\alpha}_{t}f\right)(t)=\left( \mathcal{I}^{\alpha}_{t}f\right)(t),\quad\alpha>0, \quad t>0,
	\end{equation*}
	under which the Riemann-Liouville fractional derivative is the analytic continuation of the Riemann-Liouville fractional integral for positive $\alpha$.
\end{definition}

\begin{definition}\cite{kilbas,podlubny}\label{CD}
	The Caputo fractional derivative  of a function $f\in \mathbb{C}^{n}\left( \mathbb{R_{+}},\mathbb{X}\right) $ with fractional order \\ $n-1<\alpha\leq n$, $n\in\mathbb{N}$ is defined by
	\begin{equation*}
	\left( \prescript{C}{}{\mathcal{D}}^{\alpha}_{t}f\right)(t)=\Big(g_{n-\alpha}\ast \mathcal{D}^{n}_{t}f\Big)(t)= \mathcal{I}^{n-\alpha}_{t}\prescript{}{}{\mathcal{D}^{n}_{t}}f(t),\quad t>0.
	\end{equation*}
 In particular, for $\alpha\in(1,2]$, the definition for $f\in \mathbb{C}^{2}\left( \mathbb{R_{+}},\mathbb{X}\right) $ is given by:
	\begin{equation*}
	\left( \prescript{C}{}{\mathcal{D}}^{\alpha}_{t}f\right)(t)=\left( g_{2-\alpha}\ast \mathcal{D}^{2}_{t}f\right) (t)=\frac{1}{\Gamma(2-\alpha)}\int\limits_0^t(t-s)^{1-\alpha}f^{\prime \prime}(s)\,\mathrm{d}s, \quad t>0.
	\end{equation*}
\end{definition}

\begin{lemma}\cite{kilbas,podlubny}\label{RLC}
The relationship between Riemann-Liouville and Caputo fractional differential operators  of order $ n-1<\alpha\leq n$, $n\in\mathbb{N}$ for $f\in \mathbb{C}^{n}\left( \mathbb{R_{+}},\mathbb{X}\right) $ is given by:
\begin{equation}\label{RLandC} 
\left( \prescript{C}{}{\mathcal{D}}^{\alpha}_{t}f\right)  (t)= \prescript{}{}{\mathcal{D}}^{\alpha}_{t}\left( f(t)-\sum_{i=0}^{n-1}\frac{f^{(i)}(0)t^{i}}{\Gamma(i+1)}\right) .
\end{equation}
\end{lemma}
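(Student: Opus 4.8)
The plan is to reduce the identity to the case of a function whose first $n-1$ derivatives vanish at the origin, where the two fractional derivatives agree, and then to transfer this back to $f$ through its truncated Taylor expansion. Set $T_{n-1}(t)=\sum_{i=0}^{n-1}\frac{f^{(i)}(0)}{\Gamma(i+1)}t^{i}$ and $g\coloneqq f-T_{n-1}$. Since $\Gamma(i+1)=i!$, term-by-term differentiation gives $T_{n-1}^{(i)}(0)=f^{(i)}(0)$ for $0\le i\le n-1$, so that $g^{(i)}(0)=0$ for all such $i$; and because $T_{n-1}$ has degree $n-1$ we have $\mathcal{D}^{n}_{t}T_{n-1}\equiv0$, whence $g^{(n)}=f^{(n)}$.

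First I would treat the Caputo side. By Definition~\ref{CD}, $(\prescript{C}{}{\mathcal{D}}^{\alpha}_{t}f)(t)=(g_{n-\alpha}\ast\mathcal{D}^{n}_{t}f)(t)=\mathcal{I}^{n-\alpha}_{t}f^{(n)}(t)$, and since $f^{(n)}=g^{(n)}$ this equals $\mathcal{I}^{n-\alpha}_{t}g^{(n)}(t)=(\prescript{C}{}{\mathcal{D}}^{\alpha}_{t}g)(t)$. Because the right-hand side of \eqref{RLandC} is precisely $\prescript{}{}{\mathcal{D}}^{\alpha}_{t}(f-T_{n-1})=\prescript{}{}{\mathcal{D}}^{\alpha}_{t}g$, the lemma reduces to proving $(\prescript{C}{}{\mathcal{D}}^{\alpha}_{t}g)(t)=(\prescript{}{}{\mathcal{D}}^{\alpha}_{t}g)(t)$ for $g\in\mathbb{C}^{n}(\mathbb{R}_{+},\mathbb{X})$ satisfying $g^{(i)}(0)=0$, $i=0,\dots,n-1$.

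The core step is a Leibniz-type rule for differentiating the fractional integral. Writing $\beta=n-\alpha\in[0,1)$, the substitution $u=t-s$ turns $\mathcal{I}^{\beta}_{t}h(t)$ into $\frac{1}{\Gamma(\beta)}\int_{0}^{t}u^{\beta-1}h(t-u)\,\mathrm{d}u$, and differentiating under the integral sign (the singularity $u^{\beta-1}$ being integrable for $\beta>0$) yields $\frac{\mathrm{d}}{\mathrm{d}t}\mathcal{I}^{\beta}_{t}h(t)=g_{\beta}(t)h(0)+\mathcal{I}^{\beta}_{t}h'(t)$ for $h\in\mathbb{C}^{1}(\mathbb{R}_{+},\mathbb{X})$. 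Applying this with $h=g$ and using $g(0)=0$ gives $\frac{\mathrm{d}}{\mathrm{d}t}\mathcal{I}^{\beta}_{t}g=\mathcal{I}^{\beta}_{t}g'$; iterating with $h=g',\dots,g^{(n-1)}$ and discarding each boundary term $g_{\beta}(t)g^{(i)}(0)=0$, I arrive at $\mathcal{D}^{n}_{t}\mathcal{I}^{n-\alpha}_{t}g=\mathcal{I}^{n-\alpha}_{t}g^{(n)}$. The left-hand side is $\prescript{}{}{\mathcal{D}}^{\alpha}_{t}g$ by Definition~\ref{RLD} and the right-hand side is $\prescript{C}{}{\mathcal{D}}^{\alpha}_{t}g$ by Definition~\ref{CD}, which is exactly the equality wanted; combined with the reduction above this establishes \eqref{RLandC}.

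The main obstacle I anticipate lies in justifying the Leibniz rule and its iteration rigorously: one must verify that the exponent $\beta-1\in[-1,0)$ does not obstruct differentiation under the integral, that each intermediate boundary term really equals $g_{\beta}(t)g^{(i)}(0)$ and is therefore annihilated by the vanishing initial data, and that the borderline case $\alpha=n$ (where $\beta=0$, $g_{0}\equiv0$ and $\mathcal{I}^{0}_{t}=\mathrm{Id}$ by the conventions of Definition~\ref{RLI}) degenerates consistently to the classical identity $\prescript{C}{}{\mathcal{D}}^{n}_{t}=\prescript{}{}{\mathcal{D}}^{n}_{t}=\mathcal{D}^{n}_{t}$. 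For the range $\alpha\in(1,2]$ relevant to the paper one has $n=2$, so only two iterations are needed and every step is fully explicit.
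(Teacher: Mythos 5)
The paper does not prove this lemma at all: it is quoted as a known preliminary with citations to Kilbas et al.\ and Podlubny, so there is no in-paper argument to compare against. Your proof is correct and is essentially the standard textbook derivation. The reduction $g=f-T_{n-1}$ with $g^{(i)}(0)=0$, $g^{(n)}=f^{(n)}$ is exactly right, and the identity then does collapse to showing $\mathcal{D}^{n}_{t}\mathcal{I}^{n-\alpha}_{t}g=\mathcal{I}^{n-\alpha}_{t}\mathcal{D}^{n}_{t}g$ for such $g$. Your Leibniz formula $\frac{\mathrm{d}}{\mathrm{d}t}\mathcal{I}^{\beta}_{t}h(t)=g_{\beta}(t)h(0)+\mathcal{I}^{\beta}_{t}h^{\prime}(t)$ is the correct engine: after the substitution $u=t-s$ the boundary contribution comes from the upper limit $u=t$, where the kernel $u^{\beta-1}$ is regular (for $t>0$), and the interior differentiation is justified by dominated convergence since $u^{\beta-1}$ is integrable and $h^{\prime}$ is uniformly continuous on compacts; so the obstacle you flag is real but surmountable exactly as you describe. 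The iteration discards precisely the terms $g_{\beta}(t)g^{(i)}(0)=0$, and the endpoint case $\alpha=n$ degenerates correctly under the paper's conventions $\mathcal{I}^{0}_{t}=\mathrm{Id}$, $g_{0}\equiv 0$. An equally common alternative, for the record, is to compute $\prescript{}{}{\mathcal{D}}^{\alpha}_{t}t^{i}$ explicitly for $i=0,\dots,n-1$ and subtract term by term, but your route avoids those computations and needs only the vanishing of the initial data. No gap.
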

By making use of the above property, we attain the relation of order $ 1<\alpha\leq 2$ for $f\in \mathbb{C}^{2}\left( \mathbb{R_{+}},\mathbb{X}\right) $:
\begin{equation} \label{gcon}
\prescript{}{}{\mathcal{I}}^{\alpha}_{t}\left( \prescript{C}{}{\mathcal{D}}^{\alpha}_{t}f\right)  (t)=\left( g_{\alpha}\ast g_{2-\alpha} \ast \mathcal{D}^{2}_{t}f\right) (t)= f(t)-f(0)-tf^{\prime}(0).
\end{equation}

\begin{remark}
	Since $f$ is an abstract function with values in $\mathbb{X}$, the integrals which appear in Definition \ref{RLI},  \ref{RLD}, \ref{CD} and Lemma \ref{RLC} are taken in Bochner's sense.
\end{remark}

\begin{definition}\cite{engel} \label{laplace}
If $f:\mathbb{R_{+}}\to \mathbb{X}$ is measurable and exponentially bounded function of exponent $\omega \in \mathbb{R}$, i.e., $\|f(t)\|\leq Me^{\omega t}$ for all $t\geq 0$ and some constant $M>0$. Then the Laplace transform \\ $\mathscr{L} f:\left\lbrace \lambda \in \mathscr{C} : Re(\lambda)>\omega \right\rbrace\to \mathbb{X}$ is defined by
\end{definition}

\begin{equation*}
	\left( \mathscr{L} f\right)  (\lambda)=\hat{f}(\lambda)=\int\limits_{0}^{\infty}e^{-\lambda t}f(t)\mathrm{d}t,
\end{equation*}
where $Re(\lambda)$ represents the real part of the complex number $\lambda$.
The Laplace transform of the Caputo's fractional differentiation operator \cite{kilbas,podlubny} is defined by
\begin{equation} \label{lapCaputo}
\left( \mathscr{L} (\prescript{C}{}{\mathcal{D}}^{\alpha}_{t}f)\right) (\lambda) =\lambda^{\alpha}\hat{f}(\lambda)-\sum_{k=1}^{n}\lambda^{\alpha-k}f^{(k-1)}(0), \quad n-1<\alpha\leq n, \quad n\in\mathbb{N}.
\end{equation}

In the particular case, the Laplace integral transform of the Caputo fractional derivative for  $\alpha \in (1,2]$  is:
\begin{equation*}
	\left( \mathscr{L} (\prescript{C}{}{\mathcal{D}}^{\alpha}_{t}f)\right) (\lambda) =\lambda^{\alpha}\hat{f}(\lambda)-\lambda^{\alpha-1}f(0)-\lambda^{\alpha-2}f^{\prime}(0) .
\end{equation*}

The Mittag-Leffler function is a natural generalization of the exponential function, first proposed as a single and two parameter functions of one variable by using an infinite series.

\begin{definition}\cite{Gorenflo}\label{Def:ML}
	The classical Mittag-Leffler function is defined by
	\begin{equation}\label{ML1}
	E_{\alpha}(t)= \sum_{k=0}^{\infty}\frac{t^{k}}{\Gamma(k \alpha +1)}, \quad  \alpha>0, \quad t\in\mathbb{R}.
	\end{equation}

	The two-parameter Mittag-Leffler function is given by
	\begin{equation}\label{ML-2}
	E_{\alpha,\beta}(t)= \sum_{k=0}^{\infty}\frac{t^{k}}{\Gamma(k \alpha +\beta)}, \quad  \alpha>0, \quad \beta\in\mathbb{R}, \quad t\in\mathbb{R}.
	\end{equation}
	 It is important to note that
	\[
	 E_{\alpha,1}(t)=E_{\alpha}(t),\quad E_1(t)=\exp(t), \quad t\in\mathbb{R}.
	\]
\end{definition}

$\star$ \textbf{Strongly continuous cosine and sine families of linear operators}

\begin{definition}\cite{TW1}
	\textbf{1)} A one parameter family $\left\lbrace C(t;A):t \in \mathbb{R}\right\rbrace \in \mathcal{L}(\mathbb{X}) $  into itself is called a strongly continuous cosine family in the Banach space $\mathbb{X}$ if and only if
\begin{itemize}
	\item $C(0;A)=I$;
	\item $C(s+t;A)+C(s-t;A)=2C(s;A)C(t;A)$ for all $s,t\in\mathbb{R}$;
	\item $C(t;A)x$ is continuous in $t$ on $\mathbb{R}$ for each fixed $x \in \mathbb{X}$.
\end{itemize}

\textbf{2)} The corresponding strongly continuous sine family $\left\lbrace S(t;A):t \in \mathbb{R}\right\rbrace \in \mathcal{L}(\mathbb{X}) $ is defined by
\begin{equation}
S(t;A)x=\int\limits_{0}^{t}C(s;A)x\mathrm{d}s, \quad x \in \mathbb{X}, \quad t \in \mathbb{R}.
\end{equation}

\textbf{3)} The linear operator $A: \mathscr{D}(A)\subseteq \mathbb{X} \to \mathbb{X}$ defined by
\begin{equation}
Ax=2\lim\limits_{t \to 0_{+}}\frac{C(t;A)x-x}{t^{2}}, \quad x \in \mathscr{D}(A)=\left\lbrace x \in \mathbb{X}: C(\cdot;A)x \in \mathbb{C}^{2}(\mathbb{R},\mathbb{X})\right\rbrace ,
\end{equation}

is called  the infinitesimal generator of a strongly continuous cosine family $\left\lbrace C(t;A):t \in \mathbb{R}\right\rbrace$ and $\mathscr{D}(A)$ is a domain of $A$.
\end{definition}

It is known that the infinitesimal generator $A$ is closed, densely-defined operator on $\mathbb{X}$. We will also make use of the set
\[
\mathcal{E}=\left\lbrace  x \in \mathbb{X}: C(\cdot;A)x \in \mathbb{C}^{1}\left( \mathbb{R},\mathbb{X}\right)\right\rbrace .
\] 
The families $\left\lbrace  C(t;A):t\in \mathbb{R}\right\rbrace  $ and $\left\lbrace  S(t;A):t\in \mathbb{R}\right\rbrace  $ have the following properties:

\begin{theorem}\cite{TW1}
	Let $\{ C(t;A):t\in \mathbb{R}\} $ be a strongly continuous cosine family in $\mathbb{X}$ with infinitesimal generator $A$. The following assertions hold true:
\begin{itemize} 
	\item There exists constants $M\geq 1$ and $\omega \geq 0$ such that $\norm{C(t;A)}\leq Me^{\omega |t|}$ for all $t \in \mathbb{R}$;
	\item  If $x \in \mathbb{X}$, then $S(t;A)x \in \mathcal{E}$;
	\item  If $x \in \mathcal{E}$, then $S(t;A)x\in \mathscr{D}(A)$ and $\mathcal{D}^{1}_{t}C(t;A)x=AS(t;A)x$;
	\item  If $x\in \mathscr{D}(A)$, then $C(t;A)x\in \mathscr{D}(A)$ and $\mathcal{D}^{2}_{t}C(t;A)x=AC(t;A)x=C(t;A)Ax$;
	\item  If $x\in \mathcal{E}$, then $\lim\limits_{t \to 0} AS(t;A)x=0$;
	\item If $x\in \mathcal{E}$, then $S(t;A)x \in \mathscr{D}(A)$ and $\mathcal{D}^{2}_{t}S(t;A)x=AS(t;A)x$;
	\item If $x \in \mathscr{D}(A)$, then $S(t;A)x \in \mathscr{D}(A)$ and $AS(t;A)x=S(t;A)Ax$.
\end{itemize}
\end{theorem}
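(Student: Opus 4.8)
The engine of the whole theorem is the d'Alembert (cosine) functional equation $C(s+t;A)+C(s-t;A)=2C(s;A)C(t;A)$ together with strong continuity; every assertion is extracted from it by elementary manipulations. First I would record two structural consequences: setting $s=0$ gives $C(-t;A)=C(t;A)$, so $t\mapsto C(t;A)x$ is even and hence $t\mapsto S(t;A)x=\int_0^t C(r;A)x\,\mathrm{d}r$ is odd; and integrating the functional equation over $[0,t]$ in one of its two arguments yields the mixed identity $2C(s;A)S(t;A)=S(s+t;A)-S(s-t;A)$. Using oddness of $S(\cdot;A)$, this can be rewritten for an increment $h$ as $C(h;A)S(t;A)x=\tfrac12\big(S(t+h;A)x+S(t-h;A)x\big)$, which is the form that feeds directly into the differentiation arguments below. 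I would also note that $S(\cdot;A)x\in\mathbb{C}^1(\mathbb{R},\mathbb{X})$ for every $x\in\mathbb{X}$ with $\mathcal{D}^1_t S(t;A)x=C(t;A)x$, since $S$ is the indefinite integral of the continuous map $C(\cdot;A)x$.

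The first assertion, the exponential bound, is the step I expect to be the main obstacle, because it is the only one that is not a pure differentiation identity. Here I would first invoke the uniform boundedness principle: pointwise continuity of $t\mapsto C(t;A)x$ makes $\{C(t;A):|t|\le 1\}$ pointwise bounded, hence $M:=\sup_{|t|\le1}\norm{C(t;A)}<\infty$ with $M\ge1$ since $C(0;A)=I$. Then the functional equation evaluated at arguments $nt$ and $t$ gives the Chebyshev-type recursion $C((n+1)t;A)=2C(t;A)C(nt;A)-C((n-1)t;A)$; the induced three-term estimate for $\norm{C(nt;A)}$ propagates the local bound to a geometric bound in $n$, which, after reducing an arbitrary argument to an integer multiple of a fixed sub-unit step, yields $\norm{C(t;A)}\le Me^{\omega|t|}$ for a suitable $\omega\ge0$.

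For the second and third assertions I would work from the mixed identity. Since $S(\cdot;A)x\in\mathbb{C}^1$, the right-hand side $\tfrac12(S(s+t;A)x-S(s-t;A)x)$ is $\mathbb{C}^1$ in $s$, so $s\mapsto C(s;A)S(t;A)x\in\mathbb{C}^1$, i.e. $S(t;A)x\in\mathcal{E}$. To identify the generator I would compute from the definition $A S(t;A)x=2\lim_{h\to0^+}h^{-2}\big(C(h;A)S(t;A)x-S(t;A)x\big)$ and substitute the symmetric expression, turning the bracket into the symmetric second difference $S(t+h;A)x+S(t-h;A)x-2S(t;A)x$. For $x\in\mathcal{E}$ the map $C(\cdot;A)x$ is $\mathbb{C}^1$, hence $S(\cdot;A)x\in\mathbb{C}^2$, so this limit equals $\mathcal{D}^2_t S(t;A)x=\mathcal{D}^1_t C(t;A)x$; the membership $S(t;A)x\in\mathscr{D}(A)$ is then read off from the mixed identity, which exhibits $C(\cdot;A)S(t;A)x$ as a combination of the $\mathbb{C}^2$ functions $S(\cdot;A)x$. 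This proves $\mathcal{D}^1_tC(t;A)x=AS(t;A)x$.

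The remaining four assertions then fall out quickly. Repeating the symmetric-second-difference computation with $2C(h;A)C(t;A)=C(t+h;A)+C(t-h;A)$ in place of the mixed identity gives, for $x\in\mathscr{D}(A)$ (so that $C(\cdot;A)x\in\mathbb{C}^2$), both $C(t;A)x\in\mathscr{D}(A)$ and $AC(t;A)x=\mathcal{D}^2_tC(t;A)x$; pulling the bounded operator $C(t;A)$ through the defining limit of $A$ gives $C(t;A)Ax=AC(t;A)x$, which is the fourth assertion. The fifth follows from the third, since $AS(t;A)x=\mathcal{D}^1_tC(t;A)x\to 0$ as $t\to0$ because $C(\cdot;A)x$ is even and $\mathbb{C}^1$, so its derivative vanishes at the origin. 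The sixth combines the third with $\mathcal{D}^1_tS(t;A)x=C(t;A)x$ to give $\mathcal{D}^2_tS(t;A)x=\mathcal{D}^1_tC(t;A)x=AS(t;A)x$. Finally, since $\mathscr{D}(A)\subseteq\mathcal{E}$, the sixth assertion already yields $S(t;A)x\in\mathscr{D}(A)$ for $x\in\mathscr{D}(A)$; the commutation $AS(t;A)x=S(t;A)Ax$ follows by writing $S(t;A)Ax=\int_0^tC(s;A)Ax\,\mathrm{d}s=\int_0^tAC(s;A)x\,\mathrm{d}s$ via the fourth assertion and interchanging $A$ with the integral, which is justified because $A$ is closed.
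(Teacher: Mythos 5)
The paper offers no proof of this result---it is imported verbatim from Travis and Webb \cite{TW1} as a preliminary---so there is nothing internal to compare against. Your argument is correct and is essentially the standard one from that literature: the local bound via uniform boundedness propagated by the three-term recursion $C((n+1)t;A)=2C(t;A)C(nt;A)-C((n-1)t;A)$ for the growth estimate, the integrated identity $2C(s;A)S(t;A)=S(s+t;A)-S(s-t;A)$ feeding symmetric second differences into the defining limit of $A$ for the differentiation statements, and closedness of $A$ for the final commutation. The only point worth making explicit is that pulling $C(t;A)$ through the defining limit of $A$ in the fourth assertion uses $C(t;A)C(h;A)=C(h;A)C(t;A)$, which you should note follows from the evenness of $C(\cdot;A)$ and the symmetry of the cosine functional equation.
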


\begin{theorem}\cite{TW1}
	Let $\{ C(t;A):t\in \mathbb{R}\} $ be a strongly continuous cosine family in $\mathbb{X}$ satisfying $\norm{C(t;A)}\leq Me^{\omega |t|}$ for all $t\in\mathbb{R}$ and let $A$ be the infinitesimal generator of $\{ C(t;A):t\in \mathbb{R}\}$. Then for $Re(\lambda)>\omega$, $\lambda^{2}$ is in the resolvent set of $A$ and 
	\begin{align}
	&\lambda \mathcal{R}(\lambda^
	{2};A)x=\int\limits_{0}^{\infty}e^{-\lambda t}C(t;A
	)x\mathrm{d}t, \quad x \in \mathbb{X},\\
	& \mathcal{R}(\lambda^{2};A)x=\int\limits_{0}^{\infty}e^{-\lambda t}S(t;A)x\mathrm{d}t, \quad x \in \mathbb{X}.
	\end{align} 
\end{theorem}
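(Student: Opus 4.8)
The plan is to exhibit the bounded operator $\hat S(\lambda)x:=\int_{0}^{\infty}e^{-\lambda t}S(t;A)x\,\mathrm{d}t$ as a two-sided inverse of $\lambda^{2}I-A$ on $\mathscr{D}(A)$, and then to read off the cosine formula. First I would note that the bound $\norm{C(t;A)}\le Me^{\omega|t|}$ gives $\norm{S(t;A)x}\le M\norm{x}\int_{0}^{t}e^{\omega s}\,\mathrm{d}s$, so that both $\hat C(\lambda)x:=\int_{0}^{\infty}e^{-\lambda t}C(t;A)x\,\mathrm{d}t$ and $\hat S(\lambda)x$ converge absolutely and define operators in $\mathcal L(\mathbb X)$ for $Re(\lambda)>\omega$. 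Since $S(0;A)x=0$ and $\mathcal{D}^{1}_{t}S(t;A)x=C(t;A)x$, an integration by parts yields the bridge identity $\hat C(\lambda)x=\lambda\hat S(\lambda)x$ for every $x\in\mathbb X$; hence it suffices to prove $\hat S(\lambda)=\mathcal R(\lambda^{2};A)$, and the first displayed formula then follows on multiplying by $\lambda$.

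Next I would verify the two inverse relations on the dense subspace $\mathscr{D}(A)$. For the left inverse, fix $x\in\mathscr{D}(A)\subseteq\mathcal E$; the listed properties give $S(t;A)Ax=AS(t;A)x=\mathcal{D}^{2}_{t}S(t;A)x$, and with $S(0;A)x=0$, $\mathcal{D}^{1}_{t}S(t;A)x\big|_{t=0}=C(0;A)x=x$ the Laplace rule \eqref{lapCaputo} for $n=2$ gives $\hat S(\lambda)Ax=\lambda^{2}\hat S(\lambda)x-x$, whence $\hat S(\lambda)(\lambda^{2}I-A)x=x$; in particular $\lambda^{2}I-A$ is injective. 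For the right inverse, I would instead use $\mathcal{D}^{2}_{t}C(t;A)x=AC(t;A)x=C(t;A)Ax$ together with $C(0;A)x=x$ and $\mathcal{D}^{1}_{t}C(t;A)x\big|_{t=0}=AS(0;A)x=0$; applying \eqref{lapCaputo} to $C(\cdot;A)x$ and pulling the closed operator $A$ through the Bochner integral gives $A\hat C(\lambda)x=\lambda^{2}\hat C(\lambda)x-\lambda x$, that is $(\lambda^{2}I-A)\hat S(\lambda)x=x$ with $\hat S(\lambda)x=\lambda^{-1}\hat C(\lambda)x\in\mathscr{D}(A)$.

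It remains to extend the right-inverse relation from $\mathscr{D}(A)$ to all of $\mathbb X$. For arbitrary $x\in\mathbb X$ I would pick $x_{n}\in\mathscr{D}(A)$ with $x_{n}\to x$; boundedness of $\hat S(\lambda)$ gives $\hat S(\lambda)x_{n}\to\hat S(\lambda)x$, while $A\hat S(\lambda)x_{n}=\lambda^{2}\hat S(\lambda)x_{n}-x_{n}\to\lambda^{2}\hat S(\lambda)x-x$, so the closedness of $A$ forces $\hat S(\lambda)x\in\mathscr{D}(A)$ and $(\lambda^{2}I-A)\hat S(\lambda)x=x$. Combined with the injectivity above, this shows $\lambda^{2}\in\rho(A)$ with $\mathcal R(\lambda^{2};A)=\hat S(\lambda)$, and the identity $\lambda\mathcal R(\lambda^{2};A)=\lambda\hat S(\lambda)=\hat C(\lambda)$ delivers both displayed formulas.

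I expect the main obstacle to be exactly this passage to a general $x\in\mathbb X$: the pointwise relation $AS(t;A)x=\mathcal{D}^{1}_{t}C(t;A)x$ is available only for $x\in\mathcal E$, and the second-order relations only for $x\in\mathscr{D}(A)$, so one cannot differentiate under the integral for an arbitrary $x$. Routing everything through the bounded operator $\hat S(\lambda)$ and the closedness of $A$, rather than through $t$-regularity of $S(t;A)x$, is what circumvents this; the one technical point demanding care is the repeated interchange of $A$ with the Bochner integral, which is legitimate because $t\mapsto C(t;A)x$ and $t\mapsto AC(t;A)x=C(t;A)Ax$ are both continuous and exponentially bounded when $x\in\mathscr{D}(A)$.
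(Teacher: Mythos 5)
The paper states this theorem as a cited preliminary from Travis--Webb \cite{TW1} and gives no proof of its own, so there is nothing internal to compare against; your argument is correct and is essentially the standard one from that literature (Laplace-transform the second-order relations $\mathcal{D}^{2}_{t}S(t;A)x=S(t;A)Ax$ and $\mathcal{D}^{2}_{t}C(t;A)x=AC(t;A)x$ on $\mathscr{D}(A)$, identify $\hat S(\lambda)$ as a two-sided inverse of $\lambda^{2}I-A$, and extend by density and closedness). The points you flag as delicate --- the bridge identity $\hat C(\lambda)=\lambda\hat S(\lambda)$, commuting the closed operator $A$ with the Bochner integral via continuity and exponential boundedness of $C(\cdot;A)Ax$, and the closedness argument for general $x\in\mathbb{X}$ --- are exactly the right ones and are handled correctly.
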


\begin{theorem}\cite{TW1}
Let $\{ C(t;A):t\in \mathbb{R}\} $ be a strongly continuous cosine family in $\mathbb{X}$ with infinitesimal generator $A$. If $f: \mathbb{R} \to \mathbb{X}$ is continuously differentiable, $x_{0} \in \mathscr{D}(A)$, $x^{\prime}_{0}\in \mathcal{E}$, and
\begin{equation}
x(t)\coloneqq C(t;A)x_0+S(t;A)x^{\prime}_{0}+\int\limits_{0}^{t}S(t-s;A)f(s)\mathrm{d}s, \quad  t \in \mathbb{R},
\end{equation} 
then $x(t)\in \mathscr{D}(A)$ for $t\in\mathbb{R}$, $x$ is twice continuously differentiable, and $x$ satisfies
\begin{equation}
\begin{cases}
\mathcal{D}^{2}_{t}x(t)=Ax(t)+f(t),\quad t \in \mathbb{R},\\
x(0)=x_{0}, \quad x^{\prime}(0)=x^{\prime}_{0}.
\end{cases}
\end{equation}
\end{theorem}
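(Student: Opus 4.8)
The plan is to exploit linearity and treat the three summands of $x(t)$ separately, writing $x(t)=C(t;A)x_0+S(t;A)x'_0+w(t)$ with $w(t):=\int_0^t S(t-s;A)f(s)\,\mathrm{d}s$. The two homogeneous terms are controlled directly by the preceding property theorem, so essentially all the difficulty is concentrated in the convolution term $w$; I expect that term to be the main obstacle, since $f(s)$ is only an element of $\mathbb{X}$ and hence $S(t-s;A)f(s)$ lies a priori only in $\mathcal{E}$, not in $\mathscr{D}(A)$. The computations below are carried out for $t\ge 0$; the range $t<0$ follows from the parity relations $C(-t;A)=C(t;A)$ and $S(-t;A)=-S(t;A)$ implied by the cosine functional equation.

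First I would dispose of the homogeneous part. Since $x_0\in\mathscr{D}(A)$, the property theorem gives $C(t;A)x_0\in\mathscr{D}(A)$ and $\mathcal{D}^2_t C(t;A)x_0=AC(t;A)x_0$, while $\mathcal{D}^1_t C(t;A)x_0=AS(t;A)x_0\to 0$ as $t\to0$ and $C(0;A)x_0=x_0$. Since $x'_0\in\mathcal{E}$, likewise $S(t;A)x'_0\in\mathscr{D}(A)$ and $\mathcal{D}^2_t S(t;A)x'_0=AS(t;A)x'_0$, with $S(0;A)x'_0=0$ and $\mathcal{D}^1_t S(t;A)x'_0=C(t;A)x'_0\to x'_0$ at $t=0$. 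Thus these two terms already lie in $\mathscr{D}(A)$, contribute $A\bigl(C(t;A)x_0+S(t;A)x'_0\bigr)$ to the second derivative, and reproduce the initial data $x_0,x'_0$.

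The core step is to prove $w(t)\in\mathscr{D}(A)$ with $\mathcal{D}^2_t w(t)=Aw(t)+f(t)$ and $w(0)=w'(0)=0$. I would first record the auxiliary identity
\begin{equation*}
A\int_0^\tau S(\sigma;A)x\,\mathrm{d}\sigma=C(\tau;A)x-x,\qquad x\in\mathbb{X},
\end{equation*}
obtained by integrating $\mathcal{D}^1_t C(t;A)x=AS(t;A)x$ over $[0,\tau]$ for $x\in\mathcal{E}$ (using $C(0;A)=I$ and the closedness of $A$), and then extending to every $x\in\mathbb{X}$ by approximating $x$ with elements of the dense set $\mathscr{D}(A)\subseteq\mathcal{E}$ and invoking closedness once more, noting that $Q(\tau)x:=\int_0^\tau S(\sigma;A)x\,\mathrm{d}\sigma$ is a bounded operator. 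Differentiating $w$ directly via the Leibniz rule, together with $\mathcal{D}^1_\tau S(\tau;A)=C(\tau;A)$, $S(0;A)=0$, and $C(0;A)=I$, yields $w'(t)=\int_0^t C(t-s;A)f(s)\,\mathrm{d}s$ and then, using continuous differentiability of $f$, $w''(t)=C(t;A)f(0)+\int_0^t C(t-s;A)f'(s)\,\mathrm{d}s$; this also shows $w$ is twice continuously differentiable and $w(0)=w'(0)=0$. To evaluate $Aw(t)$ I would integrate by parts in $s$, writing $w(t)=Q(t)f(0)+\int_0^t Q(t-s)f'(s)\,\mathrm{d}s$; since $Q(\tau)x\in\mathscr{D}(A)$ for all $x$ by the identity above and $s\mapsto AQ(t-s)f'(s)=C(t-s;A)f'(s)-f'(s)$ is continuous, the closedness of $A$ permits passing $A$ through the Bochner integral, giving $Aw(t)=C(t;A)f(0)+\int_0^t C(t-s;A)f'(s)\,\mathrm{d}s-f(t)=w''(t)-f(t)$.

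Adding the three contributions gives $x(t)\in\mathscr{D}(A)$ for all $t$, $x$ twice continuously differentiable, $\mathcal{D}^2_t x(t)=Ax(t)+f(t)$, and $x(0)=x_0$, $x'(0)=x'_0$, which is the assertion. The delicate points, and where I expect to spend the most care, are justifying the interchange of $A$ with the integral (handled by closedness together with the continuity of $AQ(t-s)f'(s)$) and the differentiation under the integral sign that produces $w''$ — both of which rely precisely on the continuous differentiability of $f$; the role of the hypotheses $x_0\in\mathscr{D}(A)$ and $x'_0\in\mathcal{E}$ is exactly to place the homogeneous terms in $\mathscr{D}(A)$ and to make the first derivative match $x'_0$ at $t=0$.
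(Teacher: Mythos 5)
Your proposal is correct. Note that the paper does not prove this statement at all --- it is quoted verbatim from Travis--Webb \cite{TW1} as a known preliminary --- so there is no in-paper proof to compare against; but your argument is the standard one and is sound: the homogeneous terms are handled by the listed properties of $C(\cdot;A)$ and $S(\cdot;A)$ on $\mathscr{D}(A)$ and $\mathcal{E}$, the identity $A\int_0^\tau S(\sigma;A)x\,\mathrm{d}\sigma=C(\tau;A)x-x$ for all $x\in\mathbb{X}$ (proved on $\mathcal{E}$ and extended by density plus closedness) is exactly what is needed to place the convolution term in $\mathscr{D}(A)$, and the integration by parts in $s$ together with Hille's theorem for closed operators gives $Aw(t)=w''(t)-f(t)$. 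It is worth observing that your two key tools are precisely the ones the paper deploys for the fractional analogue: your auxiliary identity is the $\alpha=2$ case of the paper's relation \eqref{2.26}, $A\mathcal{I}^1_t T_\alpha(t;A)x=C_\alpha(t;A)x-x$, and your integration-by-parts formula for $w'$ is the $\alpha=2$ case of Lemma \ref{lemma-2}; so your blind reconstruction is consistent with, and effectively a special case of, the machinery the authors build in Section \ref{pertub}.
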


$\star$ \textbf{Strongly continuous fractional cosine, sine and Riemann-Liouville families of linear operators}
\begin{definition}\cite{kexue1}
Let $1<\alpha < 2$. A family $C_{\alpha}(\cdot; A):\mathbb{R_{+}}\to \mathcal{L}(\mathbb{X})$ of all bounded linear operators on $\mathbb{X}$ is called a fractional cosine family if it satisfies the following assumptions:
\begin{itemize}
	\item $C_{\alpha}(t;A)$ is strongly continuous for all $t \in \mathbb{R_{+}}$ and $C_{\alpha}(0;A)=I$;
	\item $C_{\alpha}(s;A)C_{\alpha}(t;A)=C_{\alpha}(t;A)C_{\alpha}(s;A)$ for all $s,t\in \mathbb{R_{+}}$;
	\item The functional equation
	\begin{equation}
	C_{\alpha}(s;A)\mathcal{I}_{t}^{\alpha}C_{\alpha}(t;A)-\mathcal{I}_{s}^{\alpha}C_{\alpha}(s;A)C_{\alpha}(t;A)=\mathcal{I}_{t}^{\alpha}C_{\alpha}(t;A)-\mathcal{I}_{s}^{\alpha}C_{\alpha}(s;A) \quad \text{holds for all} \quad s,t \in \mathbb{R_{+}}.
	\end{equation}
\end{itemize}
\end{definition}
The linear operator $A$ is defined by
\begin{align}
Ax \coloneqq \Gamma(\alpha+1)\lim_{t \to 0_{+}}\frac{C_{\alpha}(t;A)x-x}{t^{\alpha}}, \quad x \in \mathscr{D}(A)\coloneqq \left\lbrace x \in \mathbb{X}: C_{\alpha}(\cdot;A) \in \mathbb{C}^{2}\left( \mathbb{R_{+}},\mathbb{X}\right) \right\rbrace,
\end{align}
where $A$ is the generator of the strongly continuous fractional cosine family $\{ C_{\alpha}(t;A):t\in \mathbb{R_{+}}\} $.

\begin{definition}\cite{kexue1}
	A strongly continuous fractional cosine family $\{ C_{\alpha}(t;A):t\in \mathbb{R_{+}}\} $ is said to be exponentially bounded if there exists constants $M\geq 1$, $\omega \geq 0$ such that
	\begin{equation}
	\norm{C_{\alpha}(t;A)}\leq M e^{\omega t}, \quad  t \in \mathbb{R_{+}}. 
	\end{equation} 
\end{definition}
\begin{theorem}\cite{chen}
Let $\{ C_{\alpha}(t;A):t\in \mathbb{R_{+}}\} $ be a  strongly continuous  fractional cosine family generated by the operator $A$. The following assertions hold true:
\begin{itemize}
	\item $C_{\alpha}(t;A)$ commutes with $A$ which means that $C_{\alpha}(t;A)\mathscr{D}(A) \subset \mathscr{D}(A) $ and $AC_{\alpha}(t;A)x=C_{\alpha}(t;A)Ax$ for all $x \in \mathscr{D}(A)$ and $t \in \mathbb{R_{+}}$;
	\item For all $x \in \mathbb{X}$, $\mathcal{I}_{t}^{\alpha}C_{\alpha}(t;A)x \in \mathscr{D}(A)$ and 
	\begin{equation}\label{2.19}
	C_{\alpha}(t;A)x=x+A\mathcal{I}_{t}^{\alpha}C_{\alpha}(t;A)x, \quad  t \in \mathbb{R_{+}}.
	\end{equation}
	\item For all $x \in \mathscr{D}(A)$,
	\begin{equation}
	C_{\alpha}(t;A)x=x+\mathcal{I}_{t}^{\alpha}C_{\alpha}(t;A)Ax, \quad  t \in \mathbb{R_{+}}.
	\end{equation}
	\item $A$ is closed, densely-defined operator on $\mathbb{X}$;
\end{itemize}
\end{theorem}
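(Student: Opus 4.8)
The whole argument hinges on rewriting the functional equation in terms of the generator and on one elementary asymptotic estimate. First I would recast the defining functional equation by subtracting its two right-hand terms, obtaining the symmetric identity
\begin{equation*}
\left(C_{\alpha}(s;A)-I\right)\mathcal{I}_{t}^{\alpha}C_{\alpha}(t;A)=\mathcal{I}_{s}^{\alpha}C_{\alpha}(s;A)\left(C_{\alpha}(t;A)-I\right),\qquad s,t\in\mathbb{R_{+}},
\end{equation*}
for which I abbreviate $R(t)\coloneqq\mathcal{I}_{t}^{\alpha}C_{\alpha}(t;A)=(g_{\alpha}\ast C_{\alpha}(\cdot;A))(t)$. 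The second ingredient is the estimate
\begin{equation*}
\lim_{s\to 0_{+}}\frac{R(s)y}{s^{\alpha}}=\frac{y}{\Gamma(\alpha+1)},\qquad y\in\mathbb{X},
\end{equation*}
which I would derive from strong continuity of $C_{\alpha}(\cdot;A)$ via the substitution $\tau=s\sigma$, writing $R(s)y=\frac{s^{\alpha}}{\Gamma(\alpha)}\int_{0}^{1}(1-\sigma)^{\alpha-1}C_{\alpha}(s\sigma;A)y\,\mathrm{d}\sigma$ and passing to the limit under the integral using $C_{\alpha}(0;A)=I$ together with $\int_{0}^{1}(1-\sigma)^{\alpha-1}\mathrm{d}\sigma=\alpha^{-1}$. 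I would prove the four assertions in the order (2), (3), (1), (4), since each later item is deduced from the earlier ones.

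For assertion (2), fix $t$ and $x\in\mathbb{X}$. Dividing the symmetric identity by $s^{\alpha}$ and sending $s\to 0_{+}$, the right-hand side converges by the estimate above to $\frac{1}{\Gamma(\alpha+1)}(C_{\alpha}(t;A)-I)x$; hence the limit defining the generator applied to $R(t)x$ exists, which places $R(t)x\in\mathscr{D}(A)$ and yields $AR(t)x=(C_{\alpha}(t;A)-I)x$, that is, $C_{\alpha}(t;A)x=x+A\mathcal{I}_{t}^{\alpha}C_{\alpha}(t;A)x$.

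For (3), take $x\in\mathscr{D}(A)$ and use that $R(t)$ is bounded and commutes with every $C_{\alpha}(s;A)$ (a consequence of the commutativity axiom and the convolution definition of $R(t)$). Writing $Ax=\Gamma(\alpha+1)\lim_{s\to 0_{+}}s^{-\alpha}(C_{\alpha}(s;A)-I)x$, applying the bounded operator $R(t)$ and pulling it through the difference quotient gives $R(t)Ax=AR(t)x$, and combining with (2) produces $C_{\alpha}(t;A)x=x+\mathcal{I}_{t}^{\alpha}C_{\alpha}(t;A)Ax$. Assertion (1) is then immediate: by (3), $C_{\alpha}(t;A)x=x+R(t)Ax$, and since $x\in\mathscr{D}(A)$ while $R(t)Ax\in\mathscr{D}(A)$ by (2), the sum lies in the subspace $\mathscr{D}(A)$; applying $A$ and invoking (2) with $y=Ax$ gives $AC_{\alpha}(t;A)x=Ax+(C_{\alpha}(t;A)-I)Ax=C_{\alpha}(t;A)Ax$.

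Finally, for (4), density follows because $\Gamma(\alpha+1)s^{-\alpha}R(s)x\in\mathscr{D}(A)$ for every $x$ and converges to $x$ as $s\to 0_{+}$, so $\closure{\mathscr{D}(A)}=\mathbb{X}$; closedness follows by taking $x_{n}\to x$ with $Ax_{n}\to y$ in the identity of (3), passing to the limit (legitimate since $C_{\alpha}(t;A)$ and $R(t)$ are bounded) to obtain $C_{\alpha}(t;A)x=x+R(t)y$, and then dividing by $t^{\alpha}$ and letting $t\to 0_{+}$ to conclude $Ax=y$. The main obstacle I anticipate is purely at the level of the asymptotic estimate and the domain bookkeeping: one must justify the limit passage under the integral in the operator norm, and — since a direct expansion shows $s\mapsto R(s)y$ behaves like $s^{\alpha}y/\Gamma(\alpha+1)$ near the origin and is therefore not literally twice differentiable there — one must read membership in $\mathscr{D}(A)$ through the existence of the defining $t^{\alpha}$-limit rather than through literal $\mathbb{C}^{2}$-regularity, which is the characterization effectively in use throughout.
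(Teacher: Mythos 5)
The paper states this theorem without proof, citing Chen and Li [chen], so there is no in-paper argument to compare against; your reconstruction is correct and is essentially the standard proof from that reference. The rearrangement of the functional equation into $(C_{\alpha}(s;A)-I)\mathcal{I}_{t}^{\alpha}C_{\alpha}(t;A)=\mathcal{I}_{s}^{\alpha}C_{\alpha}(s;A)(C_{\alpha}(t;A)-I)$, the asymptotic $\mathcal{I}_{s}^{\alpha}C_{\alpha}(s;A)y\sim s^{\alpha}y/\Gamma(\alpha+1)$, and the order (2)$\to$(3)$\to$(1)$\to$(4) all match the argument in [chen], and each limit passage you invoke is justified by strong continuity plus local boundedness. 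Your closing caveat is well taken and worth retaining: the paper's stated domain $\mathscr{D}(A)=\{x:C_{\alpha}(\cdot;A)x\in\mathbb{C}^{2}\}$ is not the characterization your proof (or Chen--Li's) actually uses, and membership in $\mathscr{D}(A)$ must indeed be read through the existence of the defining $t^{-\alpha}$-limit.
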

\begin{definition}\cite{kexue1}
	The strongly continuous fractional sine family $S_{\alpha}(\cdot;A):\mathbb{R_{+}}\to \mathcal{L}(\mathbb{X})$ associated with $C_{\alpha}$ is defined by
	\begin{equation}
	S_{\alpha}(t;A)x=\int\limits_{0}^{t}C_{\alpha}(s;A)x\mathrm{d}s, \quad x \in \mathbb{X}, \quad t \in \mathbb{R_{+}}.
	\end{equation}
This implies that 
\begin{equation}\label{AA}
	\mathcal{D}^{1}_{t}S_{\alpha}(t;A)x=C_{\alpha}(t;A)x, \quad x \in \mathbb{X}, \quad t \in \mathbb{R_{+}}.
\end{equation}
\end{definition}

\begin{definition}\cite{kexue1}
	The strongly continuous fractional Riemann-Liouville family $T_{\alpha}(\cdot;A):\mathbb{R_{+}}\to \mathcal{L}(\mathbb{X})$ associated with $C_{\alpha}$ is defined by
	\begin{equation}\label{2.22}
	T_{\alpha}(t;A)x=\mathcal{I}_{t}^{\alpha-1}C_{\alpha}(t;A)x,\quad x \in \mathbb{X}, \quad t \in \mathbb{R_{+}}.
	\end{equation}
\end{definition}

For a strongly continuous fractional cosine family $\left\lbrace C_{\alpha}(t;A): t\in \mathbb{R_{+}}\right\rbrace$, we define 

\begin{equation*}
\mathcal{E}=\left\lbrace  x \in \mathbb{X}: C_{\alpha}(\cdot;A)x \in \mathbb{C}^{1}\left( \mathbb{R_{+}},\mathbb{X}\right)\right\rbrace.
\end{equation*} 

It is known that $T_{\alpha}(t;A)\mathcal{E}\subset \mathscr{D}(A)$ for all $t\in\mathbb{R_{+}}$ such that
\begin{equation}\label{2.25}
\mathcal{D}^{1}_{t}C_{\alpha}(t;A)x=AT_{\alpha}(t;A)x, \quad x \in \mathcal{E}, \quad t \in \mathbb{R_{+}}.
\end{equation}
From \eqref{2.25} (or \eqref{2.19} and \eqref{2.22}), for $x\in \mathbb{X}$, $\mathcal{I}_{t}^{1}T_{\alpha}(t;A)x\in \mathscr{D}(A)$ such that
\begin{equation}\label{2.26}
	A\mathcal{I}_{t}^{1}T_{\alpha}(t;A)x=C_{\alpha}(t;A)x-x.
\end{equation}
\begin{theorem}\cite{kexue1}
	Let $\{ C_{\alpha}(t;A):t\in \mathbb{R_{+}}\} $ be a strongly continuous fractional cosine family in $\mathbb{X}$ satisfying $\norm{C_{\alpha}(t;A)}\leq Me^{\omega t}$ for all $t\in\mathbb{R_{+}}$ and let $A$ be the infinitesimal generator of $\{ C_{\alpha}(t;A):t\in \mathbb{R_{+}}\}$. Then for $Re(\lambda)>\omega$, $\lambda^{\alpha}$ for $\alpha \in (1,2]$ is in the resolvent set of $A$ and 
	\begin{align}
	&\lambda^{\alpha-1} \mathcal{R}(\lambda^
	{\alpha};A)x=\int\limits_{0}^{\infty}e^{-\lambda t}C_{\alpha}(t;A
	)x\mathrm{d}t, \quad x \in \mathbb{X},\\
	& \lambda^{\alpha-2}\mathcal{R}(\lambda^{\alpha};A)x=\int\limits_{0}^{\infty}e^{-\lambda t}S_{\alpha}(t;A)x\mathrm{d}t, \quad x \in \mathbb{X},\\
	&\mathcal{R}(\lambda^{\alpha};A)x=\int\limits_{0}^{\infty}e^{-\lambda t}T_{\alpha}(t;A)x\mathrm{d}t, \quad x \in \mathbb{X}.
	\end{align} 
\end{theorem}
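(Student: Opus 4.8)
The plan is to apply the Laplace transform to the fundamental relation \eqref{2.19}, read off the resolvent of $A$ from the resulting algebraic identity, and then obtain the sine- and Riemann--Liouville-family formulas by the convolution rule. Throughout I fix $\lambda$ with $Re(\lambda)>\omega$ and write $\widehat{C_{\alpha}}(\lambda)x:=\int_{0}^{\infty}e^{-\lambda t}C_{\alpha}(t;A)x\,\mathrm{d}t$. The exponential bound $\norm{C_{\alpha}(t;A)}\leq Me^{\omega t}$ makes this Bochner integral converge absolutely, with $\norm{\widehat{C_{\alpha}}(\lambda)x}\leq M\norm{x}/(Re(\lambda)-\omega)$, so $\widehat{C_{\alpha}}(\lambda)\in\mathcal{L}(\mathbb{X})$. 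Since $\mathcal{I}^{\alpha}_{t}=g_{\alpha}\ast\,\cdot\,$ and $g_{\alpha}$ has Laplace transform $\lambda^{-\alpha}$, the convolution theorem shows that $\mathcal{I}^{\alpha}_{t}C_{\alpha}(\cdot;A)x$ has Laplace transform $\lambda^{-\alpha}\widehat{C_{\alpha}}(\lambda)x$.

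Next I would transform \eqref{2.19}. The constant map $t\mapsto x$ contributes $x/\lambda$, so
\[
\widehat{C_{\alpha}}(\lambda)x=\frac{x}{\lambda}+\mathscr{L}\big(A\,\mathcal{I}^{\alpha}_{t}C_{\alpha}(\cdot;A)x\big)(\lambda).
\]
The central technical step is to move $A$ outside the integral: because $A$ is closed and both $\mathcal{I}^{\alpha}_{t}C_{\alpha}(\cdot;A)x$ and its image $A\,\mathcal{I}^{\alpha}_{t}C_{\alpha}(\cdot;A)x=C_{\alpha}(\cdot;A)x-x$ (again by \eqref{2.19}) possess convergent Laplace transforms, the standard closed-operator/Bochner-integral interchange applies, giving $\lambda^{-\alpha}\widehat{C_{\alpha}}(\lambda)x\in\mathscr{D}(A)$ and $A\lambda^{-\alpha}\widehat{C_{\alpha}}(\lambda)x=\widehat{C_{\alpha}}(\lambda)x-x/\lambda$. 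Rearranging yields the key identity $(\lambda^{\alpha}-A)\widehat{C_{\alpha}}(\lambda)x=\lambda^{\alpha-1}x$ for every $x\in\mathbb{X}$.

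From here I would establish that $\lambda^{\alpha}\in\rho(A)$. Surjectivity of $\lambda^{\alpha}-A$ is immediate from the key identity (solve $(\lambda^{\alpha}-A)w=y$ by $w=\lambda^{1-\alpha}\widehat{C_{\alpha}}(\lambda)y$). For injectivity I would use the companion relation $C_{\alpha}(t;A)x=x+\mathcal{I}^{\alpha}_{t}C_{\alpha}(t;A)Ax$, valid for $x\in\mathscr{D}(A)$: if $Ax=\lambda^{\alpha}x$, transforming gives $\widehat{C_{\alpha}}(\lambda)x=x/\lambda+\widehat{C_{\alpha}}(\lambda)x$, hence $x=0$. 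Thus $\lambda^{\alpha}-A$ is a bijection from $\mathscr{D}(A)$ onto $\mathbb{X}$; its inverse $\mathcal{R}(\lambda^{\alpha};A)=\lambda^{1-\alpha}\widehat{C_{\alpha}}(\lambda)$ is bounded by the estimate above, so $\lambda^{\alpha}\in\rho(A)$ and applying $\mathcal{R}(\lambda^{\alpha};A)$ to the key identity gives the first formula $\lambda^{\alpha-1}\mathcal{R}(\lambda^{\alpha};A)x=\widehat{C_{\alpha}}(\lambda)x$.

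The remaining two formulas are convolution bookkeeping. Writing $S_{\alpha}(t;A)x=(g_{1}\ast C_{\alpha}(\cdot;A)x)(t)$ with $\widehat{g_{1}}(\lambda)=\lambda^{-1}$ gives transform $\lambda^{-1}\widehat{C_{\alpha}}(\lambda)x=\lambda^{\alpha-2}\mathcal{R}(\lambda^{\alpha};A)x$, while $T_{\alpha}(t;A)x=(g_{\alpha-1}\ast C_{\alpha}(\cdot;A)x)(t)$ with $\widehat{g_{\alpha-1}}(\lambda)=\lambda^{1-\alpha}$ gives $\lambda^{1-\alpha}\widehat{C_{\alpha}}(\lambda)x=\mathcal{R}(\lambda^{\alpha};A)x$. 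I expect the only genuinely delicate point to be the justification of the closed-operator interchange together with the verification $\widehat{C_{\alpha}}(\lambda)x\in\mathscr{D}(A)$; the injectivity argument and the convolution steps are then routine.
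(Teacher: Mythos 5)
Your argument is correct. Note that the paper itself offers no proof of this statement --- it is imported verbatim from the cited reference \cite{kexue1} --- so there is nothing internal to compare against; your Laplace-transform route (transform the identity $C_{\alpha}(t;A)x=x+A\,\mathcal{I}^{\alpha}_{t}C_{\alpha}(t;A)x$, pull the closed operator $A$ through the Bochner integral to get $(\lambda^{\alpha}I-A)\widehat{C_{\alpha}}(\lambda)x=\lambda^{\alpha-1}x$, use the companion identity on $\mathscr{D}(A)$ for injectivity, then read off the $S_{\alpha}$ and $T_{\alpha}$ formulas from $\widehat{g_{1}}=\lambda^{-1}$ and $\widehat{g_{\alpha-1}}=\lambda^{1-\alpha}$) is exactly the standard argument used in that source, and you correctly isolate the closed-operator/integral interchange as the only step needing care.
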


\begin{theorem}\cite{kexue1}
	Let $\{ C_{\alpha}(t;A):t\in \mathbb{R_{+}}\} $ be a strongly continuous fractional cosine family in $\mathbb{X}$ with the infinitesimal generator $A$. If $f: \mathbb{R_{+}} \to \mathbb{X}$ is continuously differentiable, $x_{0} \in \mathscr{D}(A)$, $x^{\prime}_{0}\in \mathcal{E}$, and
	\begin{equation}
	x(t)\coloneqq C_{\alpha}(t;A)x_0+S_{\alpha}(t;A)x^{\prime}_{0}+\int\limits_{0}^{t}T_{\alpha}(t-s;A)f(s)\mathrm{d}s, \quad  t \in \mathbb{R_{+}},
	\end{equation} 
	then $x(t)\in \mathscr{D}(A)$ for $t\in\mathbb{R}_{+}$, $x$ is twice continuously differentiable, and $x$ satisfies
	\begin{equation}
	\begin{cases}
	\prescript{C}{}{\mathcal{D}}^{\alpha}_{t}x(t)=Ax(t)+f(t),\quad t \in \mathbb{R_{+}},\\
	x(0)=x_{0}, \quad x^{\prime}(0)=x^{\prime}_{0}.
	\end{cases}
	\end{equation}
\end{theorem}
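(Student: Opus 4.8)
The plan is to recast the Caputo Cauchy problem as an equivalent Volterra integral equation and to verify that the proposed $x(t)$ solves it. Applying $\mathcal{I}_{t}^{\alpha}$ to $\prescript{C}{}{\mathcal{D}}^{\alpha}_{t}x=Ax+f$ and invoking \eqref{gcon}, the differential problem with data $x(0)=x_{0}$, $x^{\prime}(0)=x^{\prime}_{0}$ is equivalent to
\[
x(t)=x_{0}+t\,x^{\prime}_{0}+\mathcal{I}_{t}^{\alpha}f(t)+A\,\mathcal{I}_{t}^{\alpha}x(t),\qquad \mathcal{I}_{t}^{\alpha}x(t)\in\mathscr{D}(A).
\]
First I would establish this identity for $x(t)=C_{\alpha}(t;A)x_{0}+S_{\alpha}(t;A)x^{\prime}_{0}+(T_{\alpha}\ast f)(t)$ term by term. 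For the first term \eqref{2.19} gives $C_{\alpha}(t;A)x_{0}=x_{0}+A\mathcal{I}_{t}^{\alpha}C_{\alpha}(t;A)x_{0}$ directly. For the sine term, integrating \eqref{2.19} in $t$ and using the semigroup property of the Riemann--Liouville integrals together with $S_{\alpha}=\mathcal{I}^{1}_{t}C_{\alpha}$ yields $S_{\alpha}(t;A)x^{\prime}_{0}=t\,x^{\prime}_{0}+A\mathcal{I}_{t}^{\alpha}S_{\alpha}(t;A)x^{\prime}_{0}$, the interchange of $A$ with integration being legitimate by closedness of $A$. For the inhomogeneous term I would record the operator identity $A\mathcal{I}_{t}^{\alpha}T_{\alpha}(t;A)x=T_{\alpha}(t;A)x-g_{\alpha}(t)x$, valid for all $x\in\mathbb{X}$; it follows by writing $T_{\alpha}=g_{\alpha-1}\ast C_{\alpha}$ via \eqref{2.22}, pulling the scalar kernel out of $A$, applying \eqref{2.19}, and noting $(g_{\alpha-1}\ast x)(t)=g_{\alpha}(t)x$ from \eqref{galpha}. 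Convolving with $f$ and using associativity of convolution then gives $(T_{\alpha}\ast f)(t)=\mathcal{I}_{t}^{\alpha}f(t)+A\mathcal{I}_{t}^{\alpha}(T_{\alpha}\ast f)(t)$. Summing the three identities produces exactly the Volterra equation, which already shows $\mathcal{I}_{t}^{\alpha}x(t)\in\mathscr{D}(A)$.

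Next I would dispatch the initial conditions. Since $C_{\alpha}(0;A)=I$, $S_{\alpha}(0;A)=0$ and $T_{\alpha}(0;A)=\mathcal{I}_{t}^{\alpha-1}C_{\alpha}(t;A)\big|_{t=0}=0$ (because $\alpha-1>0$), we get $x(0)=x_{0}$. Differentiating the closed form and using \eqref{AA}, the limit relation $\tfrac{d}{dt}C_{\alpha}(t;A)x_{0}=AT_{\alpha}(t;A)x_{0}=T_{\alpha}(t;A)Ax_{0}\to 0$ as $t\to 0$ (via \eqref{2.25} and $T_{\alpha}(0;A)=0$), and $T_{\alpha}(0;A)=0$ in the convolution, gives $x^{\prime}(0)=x^{\prime}_{0}$.

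The main obstacle is the \emph{stronger} assertion $x(t)\in\mathscr{D}(A)$ itself (rather than merely $\mathcal{I}_{t}^{\alpha}x\in\mathscr{D}(A)$), together with twice continuous differentiability; this is where the hypotheses $x_{0}\in\mathscr{D}(A)$, $x^{\prime}_{0}\in\mathcal{E}$ and $f\in\mathbb{C}^{1}$ are genuinely used. The term $C_{\alpha}(t;A)x_{0}$ lies in $\mathscr{D}(A)$ by commutation with $A$. For the sine term I would use the semigroup relation $g_{1}=g_{2-\alpha}\ast g_{\alpha-1}$ from \eqref{galpha}, which rewrites $S_{\alpha}(t;A)=\mathcal{I}^{1}_{t}C_{\alpha}(t;A)=\mathcal{I}^{2-\alpha}_{t}T_{\alpha}(t;A)$, so that $x^{\prime}_{0}\in\mathcal{E}$ and $T_{\alpha}(s;A)\mathcal{E}\subset\mathscr{D}(A)$ place $S_{\alpha}(t;A)x^{\prime}_{0}$ in $\mathscr{D}(A)$ by closedness of $A$. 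The delicate term is $(T_{\alpha}\ast f)(t)$, since $T_{\alpha}(t;A)y$ need not lie in $\mathscr{D}(A)$ for general $y\in\mathbb{X}$. Here I would exploit $f\in\mathbb{C}^{1}$ by writing $f(s)=f(0)+\int_{0}^{s}f^{\prime}(\tau)\,\mathrm{d}\tau$ and applying Fubini's theorem to recast
\[
(T_{\alpha}\ast f)(t)=\big(\mathcal{I}^{1}_{t}T_{\alpha}\big)(t;A)f(0)+\int_{0}^{t}\big(\mathcal{I}^{1}_{t}T_{\alpha}\big)(t-\tau;A)f^{\prime}(\tau)\,\mathrm{d}\tau,
\]
whose integrands lie in $\mathscr{D}(A)$ by \eqref{2.26}; closedness of $A$ then places the whole term in $\mathscr{D}(A)$ and lets me compute $A(T_{\alpha}\ast f)$ explicitly in terms of $C_{\alpha}$ and $f$. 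With $x(t)\in\mathscr{D}(A)$ and $Ax$ continuous secured, I can commute $A$ past $\mathcal{I}_{t}^{\alpha}$ in the Volterra equation to obtain $x(t)=x_{0}+t\,x^{\prime}_{0}+\mathcal{I}_{t}^{\alpha}\big(Ax+f\big)(t)$; differentiating the explicit representation with the Leibniz-type rule \eqref{lem3.1} together with \eqref{AA} and \eqref{2.25} establishes $x\in\mathbb{C}^{2}$, and applying $\prescript{C}{}{\mathcal{D}}^{\alpha}_{t}$ (equivalently, using \eqref{gcon} in reverse) finally yields $\prescript{C}{}{\mathcal{D}}^{\alpha}_{t}x=Ax+f$.

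As an independent check I would take Laplace transforms: the resolvent formulas for $C_{\alpha},S_{\alpha},T_{\alpha}$ give $\hat{x}(\lambda)=\lambda^{\alpha-1}\mathcal{R}(\lambda^{\alpha};A)x_{0}+\lambda^{\alpha-2}\mathcal{R}(\lambda^{\alpha};A)x^{\prime}_{0}+\mathcal{R}(\lambda^{\alpha};A)\hat{f}(\lambda)$, which equals $\mathcal{R}(\lambda^{\alpha};A)$ applied to $\lambda^{\alpha-1}x_{0}+\lambda^{\alpha-2}x^{\prime}_{0}+\hat{f}(\lambda)$, precisely the transform of the unique solution of the Caputo problem; this confirms the computation and simultaneously delivers uniqueness.
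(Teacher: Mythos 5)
This theorem is stated in the Preliminaries as a result quoted from \cite{kexue1}; the paper itself gives no proof of it, so there is no in-paper argument to compare against. Judged on its own terms, your proposal is sound and essentially complete in strategy. The term-by-term verification of the Volterra identity $x(t)=x_{0}+t\,x^{\prime}_{0}+\mathcal{I}_{t}^{\alpha}f(t)+A\,\mathcal{I}_{t}^{\alpha}x(t)$ via \eqref{2.19}, the convolution identity $T_{\alpha}(t;A)x=g_{\alpha}(t)x+A\mathcal{I}_{t}^{\alpha}T_{\alpha}(t;A)x$, and closedness of $A$ is correct, and you rightly isolate the genuinely delicate point: membership of the inhomogeneous term in $\mathscr{D}(A)$, which you handle by writing $f(s)=f(0)+\int_{0}^{s}f^{\prime}(\tau)\,\mathrm{d}\tau$, applying Fubini, and invoking \eqref{2.26} --- this is exactly where the hypothesis $f\in\mathbb{C}^{1}$ enters, and it mirrors how the authors themselves treat $\prescript{C}{}{\mathcal{D}}^{\alpha}_{t}(T_{\alpha}\ast(Bu))$ in the proof of Theorem \ref{thm1}. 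Two places deserve slightly more care than you give them. First, when you differentiate $C_{\alpha}(t;A)x_{0}$ via \eqref{2.25} you are implicitly using $\mathscr{D}(A)\subseteq\mathcal{E}$; this is true (a $\mathbb{C}^{2}$ orbit is a $\mathbb{C}^{1}$ orbit) but should be said. Second, the claim that $x\in\mathbb{C}^{2}$ is the least developed step: twice differentiability of $C_{\alpha}(\cdot)x_{0}$ is immediate from the definition of $\mathscr{D}(A)$, and of $S_{\alpha}(\cdot)x^{\prime}_{0}$ from $x^{\prime}_{0}\in\mathcal{E}$ together with \eqref{AA}, but for $(T_{\alpha}\ast f)(t)$ you need to differentiate your Fubini representation once more and justify that $\mathcal{D}^{1}_{t}\mathcal{I}^{1}_{t}T_{\alpha}(t;A)y=T_{\alpha}(t;A)y$ combines with the continuity of $C_{\alpha}(\cdot)y-y=A\mathcal{I}^{1}_{t}T_{\alpha}(\cdot;A)y$ to give a continuous second derivative; spelling this out would close the argument. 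The Laplace-transform cross-check is a nice independent confirmation but, as you present it, only verifies the formula rather than the regularity assertions.
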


\section{Perturbation theory for fractional evolution equations}\label{pertub}

We first consider the following Cauchy problem for homogeneous fractional evolution equation in a Banach space $\mathbb{X}$:
\begin{equation}\label{eq-3}
\begin{cases}
\left( \prescript{C}{}{\mathcal{D}^{\alpha}_{t}}u\right) (t)=\left(A+B(t) \right) u(t), \quad t >0,\\
u(0)=x , \qquad  u^{\prime}(0)=y.
\end{cases}
\end{equation}

\begin{theorem}\label{thm1}
	Let $A$ be the infinitesimal generator of the fractional cosine family $\left\lbrace C_{\alpha}(t;A): t \in \mathbb{R_{+}}\right\rbrace $ with \\ $\|C_{\alpha}(t;A)\|\leq Me^{\omega t}$ for all $t \in \mathbb{R_{+}}$, and let $\left\lbrace S_{\alpha}(t;A): t \in \mathbb{R_{+}}\right\rbrace $ denote the fractional sine family associated with $C_{\alpha}$. Furthermore, suppose that $B:\mathbb{R_{+}}\to \mathcal{L}(\mathbb{X})$ is a strongly continuously differentiable function on $\mathbb{R_{+}}$. Then
	
	\begin{itemize}
	\item The Cauchy problem for the $\mathbb{X}$-valued differential equation \eqref{eq-3}
	is uniformly well-posed; more precisely, for each pair $x,y\in \mathscr{D}(A)$ there is a uniquely determined solution $u\in \mathbb{C}^{2}\left(\mathbb{R_{+}};\mathscr{D}(A) \right)$ of \eqref{eq-3} fulfilling initial conditions and $u\in\mathscr{D}(A)$ depends continuously (with respect to the topology of uniform convergence) on compact subsets of $\mathbb{R}_{+}$ upon $(x,y)$. 
	\item $u\in\mathscr{D}(A)$ is given by
	\begin{equation}\label{usol}
	u(t)= C_{\alpha}(t;A+B)x+S_{\alpha}(t;A+B)y, \quad t \in \mathbb{R_{+}},
	\end{equation}
	where for $x\in \mathscr{D}(A)$, $t \mapsto C_{\alpha}(t;A+B)x$ and $t\mapsto S_{\alpha}(t;A+B)x$ are $2$-times strongly continuously differentiable solutions of \eqref{eq-3} fulfilling 
	\begin{align}\label{csder}
	&C_{\alpha}(0;A+B)x=x, \quad C^{\prime}_{\alpha}(0;A+B)x=0,\\
	&S_{\alpha}(0;A+B)x=0,\label{ssder} \quad S^{\prime}_{\alpha}(0;A+B)x=x.
	\end{align}
	\item $C_{\alpha}(\cdot;A+B),S_{\alpha}(\cdot;A+B)\in \mathcal{L}(\mathbb{X})$  are defined by
	\begin{align}
	&C_{\alpha}(t;A+B)x\coloneqq \sum_{n=0}^{\infty}C_{\alpha,n}(t,A)x, \quad x \in \mathbb{X},\label{Cdefine}\\
	&S_{\alpha}(t;A+B)x\coloneqq \sum_{n=0}^{\infty}S_{\alpha,n}(t;A)x, \quad x\in \mathbb{X}\label{Sdefine},
	\end{align}
	where  
	\begin{align}
	&C_{\alpha,0}(t;A)x\coloneqq C_{\alpha}(t;A)x,\quad S_{\alpha,0}(t;A)x\coloneqq S_{\alpha}(t;A)x,\nonumber\\
	&C_{\alpha,n}(t;A)x\coloneqq \int\limits_{0}^{t}T_{\alpha}(t-s;A)B(s)C_{\alpha,n-1}(s;A)x\mathrm{d}s,\label{Calphan}\\
    &S_{\alpha,n}(t;A)x\coloneqq \int\limits_{0}^{t}T_{\alpha}(t-s;A)B(s)S_{\alpha,n-1}(s;A)x\mathrm{d}s\label{Salphan},\\
	&T_{\alpha}(t;A)x=\int\limits_{0}^{t}g_{\alpha-1}(t-s)C_{\alpha}(s;A)x\mathrm{d}s, \quad n\in \mathbb{N}, \quad x\in \mathbb{X}. \label{Talpha}
	\end{align}

\item With $K_{t}\coloneqq \sup\limits_{0\leq s \leq t}\Biggl\{ \norm{B(s)},\norm{B^{\prime}(s)}\Biggr\}$, we have for all $s\in [0,t]$, the bounds

\begin{align*}
&\norm{C_{\alpha}(s;A+B)} \leq Me^{\omega s}E_{\alpha}\left(MK_{t}s^{\alpha} \right),\\
&\norm{S_{\alpha}(s;A+B)} \leq Me^{\omega s}sE_{\alpha,2}\left(MK_{t}s^{\alpha} \right),\\
&\norm{C_{\alpha}(s;A+B)-C_{\alpha}(s;A)} \leq Me^{\omega s}\Big[E_{\alpha}\left(MK_{t}s^{\alpha} \right)-1\Big],\\
&\norm{S_{\alpha}(s;A+B)-S_{\alpha}(s;A)} \leq Me^{\omega s}s\Big[E_{\alpha,2}\left(MK_{t}s^{\alpha} \right)-1\Big]. 
\end{align*}
\end{itemize}
\end{theorem}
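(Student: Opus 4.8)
The plan is to treat the perturbation series \eqref{Cdefine}--\eqref{Salphan} as the unique solution of a Volterra integral equation, to deduce its convergence and regularity from estimates on the unperturbed families, and finally to verify that the resulting $u$ is a classical solution of \eqref{eq-3}. First I would estimate the Riemann--Liouville family: from \eqref{Talpha}, the bound $\|C_\alpha(t;A)\|\le Me^{\omega t}$, and $\int_0^t g_{\alpha-1}(t-s)\,\mathrm{d}s = g_\alpha(t)$, one obtains $\|T_\alpha(t;A)\|\le Me^{\omega t}g_\alpha(t)$. Then, by induction on $n$ using the convolution identity \eqref{galpha}, I would prove
\[
\|C_{\alpha,n}(t;A)\|\le M^{n+1}e^{\omega t}K_t^{\,n}\,g_{n\alpha+1}(t),\qquad \|S_{\alpha,n}(t;A)\|\le M^{n+1}e^{\omega t}K_t^{\,n}\,g_{n\alpha+2}(t),
\]
the inductive step resting on $\|C_{\alpha,n}(t;A)\|\le\int_0^t\|T_\alpha(t-s;A)\|\,\|B(s)\|\,\|C_{\alpha,n-1}(s;A)\|\,\mathrm{d}s$ together with $(g_\alpha\ast g_{(n-1)\alpha+1})=g_{n\alpha+1}$. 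Summing and recognizing $\sum_n z^n/\Gamma(n\alpha+1)=E_\alpha(z)$ and $\sum_n z^n/\Gamma(n\alpha+2)=E_{\alpha,2}(z)$ yields the four stated estimates; in particular the majorants are finite, so the series converge in $\mathcal{L}(\mathbb{X})$ uniformly on compact subsets of $\mathbb{R}_+$, whence $C_\alpha(\cdot;A+B)$ and $S_\alpha(\cdot;A+B)$ are well-defined strongly continuous families, and the two difference estimates follow by summing the tails from $n=1$.

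Next I would record the Volterra integral equations. Interchanging summation and integration (justified by the uniform convergence just established), the series \eqref{Cdefine}, \eqref{Sdefine} satisfy
\[
C_\alpha(t;A+B)x=C_\alpha(t;A)x+\int_0^t T_\alpha(t-s;A)B(s)C_\alpha(s;A+B)x\,\mathrm{d}s,
\]
and the analogous identity for $S_\alpha$, because the index shift sends $\sum_{n\ge0}C_{\alpha,n}$ to $C_{\alpha,0}+\sum_{n\ge1}C_{\alpha,n}$. Consequently $u(t)=C_\alpha(t;A+B)x+S_\alpha(t;A+B)y$ obeys
\[
u(t)=C_\alpha(t;A)x+S_\alpha(t;A)y+\int_0^t T_\alpha(t-s;A)B(s)u(s)\,\mathrm{d}s.
\]
The initial conditions \eqref{csder}--\eqref{ssder} come from evaluating at $t=0$, where every term with $n\ge1$ vanishes, combined with term-by-term differentiation (legitimate by uniform convergence of the differentiated series) and the unperturbed relations $\mathcal{D}^1_tS_\alpha(t;A)x=C_\alpha(t;A)x$ and $\mathcal{D}^1_tC_\alpha(t;A)x=AT_\alpha(t;A)x$.

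The crux is to upgrade this integral solution to a classical one. Treating $f(s)\coloneqq B(s)u(s)$ as a forcing term and applying $\mathcal{I}^\alpha_t$, I would show, using \eqref{2.19} in the form $C_\alpha(t;A)x-x=A\mathcal{I}^\alpha_tC_\alpha(t;A)x$, its integrated analogue $S_\alpha(t;A)y-ty=A\mathcal{I}^\alpha_tS_\alpha(t;A)y$ (obtained by integrating the former and using closedness of $A$), and \eqref{2.26} for the convolution term, that
\[
u(t)-x-ty=\mathcal{I}^\alpha_t\big[(A+B(\cdot))u(\cdot)\big](t).
\]
By \eqref{gcon} this is exactly equivalent to $\prescript{C}{}{\mathcal{D}}^\alpha_tu=(A+B(t))u$ with the prescribed data, once the requisite smoothness is in place. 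Here the hypotheses $x,y\in\mathscr{D}(A)$ and $B\in\mathbb{C}^1$ are essential: the differentiability of $B$ lets me move a derivative off the kernel onto $B$ (this is where $\|B'(s)\|$ enters $K_t$) by means of the product rule \eqref{lem3.1}, which, combined with $T_\alpha(t;A)\mathcal{E}\subset\mathscr{D}(A)$ and $u(t)\in\mathscr{D}(A)$, gives $u\in\mathbb{C}^2(\mathbb{R}_+;\mathscr{D}(A))$. I expect this regularity bootstrap — justifying two strong derivatives of the convolution series and the membership $u(t)\in\mathscr{D}(A)$ uniformly on compacts — to be the main obstacle. Finally, uniqueness follows since any solution with zero data satisfies $u(t)=\int_0^tT_\alpha(t-s;A)B(s)u(s)\,\mathrm{d}s$, and the bound $\|T_\alpha(t;A)\|\le Me^{\omega t}g_\alpha(t)$ feeds a generalized Gronwall inequality forcing $u\equiv0$; continuous dependence of $u$ on $(x,y)$ over compact sets is then immediate from linearity and the operator-norm estimates on $C_\alpha(\cdot;A+B)$ and $S_\alpha(\cdot;A+B)$.
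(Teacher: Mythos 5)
Your proposal follows essentially the same route as the paper's proof: the same bound $\|T_\alpha(t;A)\|\le Me^{\omega t}g_\alpha(t)$, the same inductive estimates on $C_{\alpha,n}$ and $S_{\alpha,n}$ summing to the Mittag-Leffler majorants, the same Volterra integral equation obtained by the index shift, term-by-term differentiation for the $C^2$ regularity, and the same Volterra/iteration argument for uniqueness. The only cosmetic difference is that you verify the equation by applying $\mathcal{I}^{\alpha}_{t}$ and invoking \eqref{gcon}, whereas the paper applies $\prescript{C}{}{\mathcal{D}}^{\alpha}_{t}$ directly to the convolution term; both hinge on the same regularity bootstrap, which the paper carries out explicitly via recursions and bounds for $C^{\prime}_{\alpha,n}$, $C^{\prime\prime}_{\alpha,n}$, $S^{\prime}_{\alpha,n}$, $S^{\prime\prime}_{\alpha,n}$ and $T^{\prime}_{\alpha}$.
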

\begin{proof}
	For simplicity, we split our proof into several steps.
\begin{enumerate}
	\item From $\norm{C_{\alpha}(t;A)}\leq M e^{\omega t}$, it follows that $\norm{S_{\alpha}(t;A)}\leq Mte^{\omega t}$. By using the formula \eqref{Talpha}, we have 
	\begin{align}\label{A}
	\norm{T_{\alpha}(t;A)}&\leq \int\limits_{0}^{t}g_{\alpha-1}(t-s)\norm{C_{\alpha}(s;A)}\mathrm{d}s\leq M\int\limits_{0}^{t}\frac{(t-s)^{\alpha-2}}{\Gamma(\alpha-1)}e^{\omega s}\mathrm{d}s\nonumber\\&\leq Me^{\omega t}\int\limits_{0}^{t}\frac{(t-s)^{\alpha-2}}{\Gamma(\alpha-1)}\mathrm{d}s=Me^{\omega t}g_{\alpha}(t).
	\end{align}
	
	Put $K_{t}\coloneqq \sup\limits_{0\leq s \leq t}\Biggl\{\norm{B(s)}, \norm{B^{\prime}(s)}\Biggr\}$. Then, it is true that for $n \in N_{0}$
	
	\begin{align}\label{formula-1}
	&\norm{C_{\alpha,n}(t;A)}\leq M^{n+1} K_{t}^{n}e^{\omega t}g_{n\alpha+1}(t),\\
	&\norm{S_{\alpha,n}(t;A)}\leq M^{n+1} K_{t}^{n}e^{\omega t}g_{n\alpha+2}(t).
	\label{formula-2}
	\end{align}
	In the case of \eqref{formula-1}, this is true by our remark above for $n=0$. By mathematical induction principle, we verify 
	
	\begin{align}
	&\norm{C_{\alpha,n+1}(t;A)x}\leq \int\limits_{0}^{t}\norm{T_{\alpha}(t-s;A)}\norm{B(s)}\norm{C_{\alpha,n}(s;A)x} \mathrm{d}s\nonumber\\
	&\leq M^{n+2}K_{t}^{n+1}\norm{x}\int\limits_{0}^{t}g_{\alpha}(t-s)
	e^{w(t-s)}e^{\omega s}g_{n\alpha+1}(s)\mathrm{d}s\nonumber\\
	&\leq M^{n+2}K_{t}^{n+1}e^{\omega t}\norm{x}\int\limits_{0}^{t}\frac{(t-s)^{\alpha-1}}{\Gamma(\alpha)}\frac{s^{n\alpha}}{\Gamma(n\alpha+1)}\mathrm{d}s\nonumber\\
	&=M^{n+2}K_{t}^{n+1}e^{\omega t}\norm{x}g_{(n+1)\alpha+1}(t), \quad x \in \mathbb{X}.
	\end{align}
	
	The analogues procedure gives the bound for $S_{\alpha,n}(t;A)$, $t\in \mathbb{R_{+}}$, as follows:
	\allowdisplaybreaks
	\begin{align}
	&\norm{S_{\alpha,n+1}(t;A)x}\leq \int\limits_{0}^{t}\norm{T_{\alpha}(t-s;A)}\norm{B(s)}\norm{S_{\alpha,n}(s;A)x} \mathrm{d}s\nonumber\\
	&\leq M^{n+2}K_{t}^{n+1}\norm{x}\int\limits_{0}^{t}g_{\alpha}(t-s)
	e^{w(t-s)}e^{\omega s}g_{n\alpha+2}(s)\mathrm{d}s\nonumber\\
	&\leq M^{n+2}K_{t}^{n+1}e^{\omega t}\norm{x}\int\limits_{0}^{t}\frac{(t-s)^{\alpha-1}}{\Gamma(\alpha)}\frac{s^{n\alpha+1}}{\Gamma(n\alpha+2)}\mathrm{d}s\nonumber\\
	&=M^{n+2}K_{t}^{n+1}e^{\omega t}\norm{x}g_{(n+1)\alpha+2}(t), \quad x \in \mathbb{X}.
	\end{align}
	
	From these bounds it follows that the series representing $C_{\alpha}(t;A+B)$ and $S_{\alpha}(t;A+B)$ in \eqref{Cdefine} and \eqref{Sdefine}, respectively are uniformly convergent on compact subsets of $\mathbb{R}_{+}$ with respect to the operator norm topology. Hence, $C_{\alpha}(t;A+B)$ and $S_{\alpha}(t;A+B)$  are strongly continuous families on $\mathbb{R}_{+}$ with values in $\mathcal{L}(\mathbb{X})$.

	Furthermore, the bounds for $C_{\alpha}(s;A+B)$, $S_{\alpha}(s;A+B)$, $C_{\alpha}(s;A+B)-C_{\alpha}(s;A)$, and $S_{\alpha}(s;A+B)-S_{\alpha}(s;A)$, $s\in[0,t]$ derive as the following direct consequences:
	\begin{align*}
	\norm{C_{\alpha}(s;A+B)} \leq \sum_{n=0}^{\infty}\norm{C_{\alpha,n}(s;A)}\leq Me^{\omega s}\sum_{n=0}^{\infty}M^{n}K_{t}^{n}\frac{s^{n\alpha}}{\Gamma(n\alpha+1)}=Me^{\omega s}E_{\alpha}\left(MK_{t}s^{\alpha} \right), 
	\end{align*}
	
	\begin{align*}
	\norm{S_{\alpha}(s;A+B)} \leq \sum_{n=0}^{\infty}\norm{S_{\alpha,n}(s;A)}\leq Mse^{\omega s}\sum_{n=0}^{\infty}M^{n}K_{t}^{n}\frac{s^{n\alpha}}{\Gamma(n\alpha+2)}=Me^{\omega s}sE_{\alpha,2}\left(MK_{t}s^{\alpha} \right), 
	\end{align*}
	\begin{align*}
	\norm{C_{\alpha}(s;A+B)-C_{\alpha}(s;A)} \leq \sum_{n=1}^{\infty}\norm{C_{\alpha,n}(s;A)}\leq Me^{\omega s}\sum_{n=1}^{\infty}M^{n}K_{t}^{n}\frac{s^{n\alpha}}{\Gamma(n\alpha+1)}=Me^{\omega s}\Big[E_{\alpha}\left(MK_{t}s^{\alpha} \right)-1\Big], 
	\end{align*}
	\begin{align*}
	\norm{S_{\alpha}(s;A+B)-S_{\alpha}(s;A)} \leq \sum_{n=1}^{\infty}\norm{S_{\alpha,n}(s;A)}\leq Mse^{\omega s}\sum_{n=1}^{\infty}M^{n}K_{t}^{n}\frac{s^{n\alpha}}{\Gamma(n\alpha+2)}=Me^{\omega s}s\Big[E_{\alpha,2}\left(MK_{t}s^{\alpha} \right)-1\Big]. 
	\end{align*}
\item We show that, for $x\in \mathscr{D}(A)$, the function $t\mapsto C_{\alpha}(t;A+B)x$ is twice continuously differentiable. Assuming that this is true for $t\mapsto C_{\alpha,n}(t;A)x$, then as in Phillips \cite{philips}, it is easily shown to be true for $t\mapsto B(t)C_{\alpha,n}(t;A)x$ and thus, for $t\mapsto C_{\alpha,n+1}(t;A)x$. Furthermore, for $x\in \mathscr{D}(A)$, we have the following formulas:
\allowdisplaybreaks
\begin{align}
&C^{\prime}_{\alpha}(t;A)x=   \prescript{}{}{\mathcal{I}^{1}_{t}} \prescript{}{}{\mathcal{D}^{2}_{t}}C_{\alpha}(t;A)x=\prescript{}{}{\mathcal{I}^{\alpha-1}_{t}}\prescript{}{}{\mathcal{I}^{2-\alpha}_{t}} \prescript{}{}{\mathcal{D}^{2}_{t}}C_{\alpha}(t;A)x\nonumber\\
&\hspace{1.5cm}=\prescript{}{}{\mathcal{I}^{\alpha-1}_{t}}\prescript{C}{}{\mathcal{D}^{\alpha}_{t}}C_{\alpha}(t;A)x=\prescript{}{}{\mathcal{I}^{\alpha-1}_{t}}AC_{\alpha}(t;A)x=\prescript{}{}{\mathcal{I}^{\alpha-1}_{t}}C_{\alpha}(t;A)Ax=T_{\alpha}(t;A)Ax=AT_{\alpha}(t;A)x,\nonumber\\
&C^{\prime}_{\alpha,n+1}(t;A)x=\int\limits_{0}^{t}T^{\prime}_{\alpha}(t-s;A)B(s)C_{\alpha,n}(s;A)x\mathrm{d}s,\nonumber\\
&C^{\prime \prime}_{\alpha}(t;A)x=T^{\prime}_{\alpha}(t;A)Ax=AT^{\prime}_{\alpha}(t;A)x,\nonumber\\
&C^{\prime \prime}_{\alpha,n+1}(t;A)x=\int\limits_{0}^{t}T^{\prime}_{\alpha}(t-s;A)B^{\prime}(s)C_{\alpha,n}(s;A)x\mathrm{d}s+\int\limits_{0}^{t}T^{\prime}_{\alpha}(t-s;A)B(s)C^{\prime}_{\alpha,n}(s;A)x\mathrm{d}s,\nonumber\\
& T^{\prime}_{\alpha}(t;A)x=g_{\alpha-1}(t)\ast T_{\alpha}(t;A)Ax+g_{\alpha-1}(t)x=g_{\alpha-1}(t)\ast AT_{\alpha}(t;A)x+g_{\alpha-1}(t)x\label{Tder}.
\end{align}

Furthermore, from $\norm{T_{\alpha}(t;A)}\leq M e^{\omega t}g_{\alpha}(t)$, it follows that 
\begin{align}\label{talpha}
\norm{T^{\prime}_{\alpha}(t;A)x}&\leq \int\limits_{0}^{t}g_{\alpha-1}(t-s)\norm{T_{\alpha}(s;A)}\mathrm{d}s\norm{Ax}+g_{\alpha-1}(t)\|x\|\nonumber\\
&\leq M\int\limits_{0}^{t}g_{\alpha-1}(t-s)g_{\alpha}(s)e^{\omega s}\mathrm{d}s\norm{Ax}+g_{\alpha-1}(t)\|x\|\nonumber\\
&\leq Me^{\omega t}g_{2\alpha-1}(t)\norm{Ax}+g_{\alpha-1}(t)\norm{x}, \quad  x \in \mathscr{D}(A).
\end{align}

Using \eqref{formula-1} and \eqref{talpha}, we obtain by induction from these equations \eqref{Tder} the following bounds for $x \in \mathscr{D}(A)$:
\begin{align*}
&\|C^{\prime}_{\alpha}(t;A)x\|\leq Me^{\omega t}g_{\alpha}(t)\|Ax\|,\\
&\|C^{\prime}_{\alpha,1}(t;A)x\|\leq M^2 K_{t}e^{\omega t}g_{2\alpha}(t)\|Ax\|+MK_{t}e^{\omega t}g_{\alpha}(t)\norm{x},\\
&|C^{\prime}_{\alpha,n}(t;A)x\|\leq M^{n+1}K^{n}_{t}e^{\omega t}g_{n\alpha+\alpha}(t)\|Ax\|+M^{n}K^{n}_{t}e^{\omega t}g_{n\alpha}(t)\|x\|, \quad n\in \mathbb{N},\\\\
&\|C^{\prime\prime}_{\alpha}(t;A)x\|\leq Me^{\omega t}g_{2\alpha-1}(t)\|A^{2}x\|+g_{\alpha-1}(t)\|Ax\|,\\ 
&\|C^{\prime\prime}_{\alpha,1}(t;A)x\|\leq M K_{t}e^{\omega t}g_{\alpha}(t)\|x\|+
M K_{t}e^{\omega t}\Big( Mg_{2\alpha}(t)+g_{2\alpha-1}(t)\Big) \|Ax\|+
M^2 K_{t}e^{\omega t}g_{3\alpha-1}(t)\|A^{2}x\|,\\
&|C^{\prime\prime}_{\alpha,n}(t;A)x\|\leq M^{n}K^{n}_{t}e^{\omega t}\Big( Mg_{n\alpha+\alpha}(t)+ng_{n\alpha+\alpha-1}(t)\Big) \|Ax\|+M^{n+1}K^{n}_{t}e^{\omega t}g_{n\alpha+2\alpha-1}(t)\|A^{2}x\|\\
&\hspace{+2 cm}+M^{n-1}K^{n}_{t}e^{\omega t}\Big( Mg_{n\alpha}(t)+g_{n\alpha-1}(t)\Big) \|x\|,\quad n\geq 2.
\end{align*}
In a completely analogues manner $t\mapsto S_{\alpha}(t;A+B)$ is seen to be twice continuously differentiable on $\mathbb{R_{+}}$ for every $x\in \mathscr{D}(A)$, as follows:
\allowdisplaybreaks
\begin{align}
&S^{\prime}_{\alpha}(t;A)x=C_{\alpha}(t;A)x,\nonumber\\
&S^{\prime}_{\alpha,n+1}(t;A)x=\int\limits_{0}^{t}T^{\prime}_{\alpha}(t-s;A)B(s)S_{\alpha,n}(s;A)x\mathrm{d}s,\nonumber\\
&S^{\prime\prime}_{\alpha}(t;A)x=T_{\alpha}(t;A)Ax=AT_{\alpha}(t;A)x,\nonumber\\
&S^{\prime \prime}_{\alpha,n+1}(t;A)x=\int\limits_{0}^{t}T^{\prime}_{\alpha}(t-s;A)B^{\prime}(s)S_{\alpha,n}(s;A)x\mathrm{d}s\nonumber\\
&\hspace{2cm}+\int\limits_{0}^{t}T^{\prime}_{\alpha}(t-s;A)B(s)S^{\prime}_{\alpha,n}(s;A)x\mathrm{d}s\label{Sder}.
\end{align}
Using \eqref{formula-2} and \eqref{talpha}, we obtain by induction from these equations \eqref{Sder} the following bounds:
\begin{align*}
&\|S^{\prime}_{\alpha}(t;A)x\|\leq Me^{\omega t}\|x\|,\\
&\|S^{\prime}_{\alpha,1}(t;A)x\|\leq M^2 K_{t}e^{\omega t}g_{2\alpha+1}(t)\|Ax\|+ M K_{t}e^{\omega t}g_{\alpha+1}(t)\|x\|,\\
&|S^{\prime}_{\alpha,n}(t;A)x\|\leq M^{n+1}K^{n}_{t}e^{\omega t}g_{n\alpha+\alpha+1}(t)\|Ax\|+M^{n}K^{n}_{t}e^{\omega t}g_{n\alpha+1}(t)\|x\|,\quad n\in \mathbb{N},\\\\
&\|S^{\prime\prime}_{\alpha}(t;A)x\|\leq Me^{\omega t}g_{\alpha}(t)\|Ax\|,\\ 
&\|S^{\prime\prime}_{\alpha,1}(t;A)x\|\leq M^2 K_{t}e^{\omega t}\Big( g_{2\alpha}(t)+g_{2\alpha+1}(t)\Big) \|Ax\|+M K_{t}e^{\omega t}\Big( g_{2\alpha}(t)+g_{\alpha+1}(t)\Big) \|x\|,\\
&\|S^{\prime\prime}_{\alpha,n}(t;A)x\|\leq M^{n}K^{n}_{t}e^{\omega t}\Big( Mg_{n\alpha+\alpha+1}(t)+ng_{n\alpha+\alpha}(t)\Big)\|Ax\|+M^{n+1}K^{n}_{t}e^{\omega t}g_{n\alpha+2\alpha}(t)\|A^{2}x\|\\
&\hspace{+2 cm}+M^{n-1}K^{n}_{t}e^{\omega t}\Big( g_{n\alpha}(t)+Mg_{n\alpha+1}(t)\Big)\|x\|, \quad n \geq 2.
\end{align*}

Therefore, for $x \in \mathscr{D}(A)$, the series $\sum\limits_{n=0}^{\infty}C^{\prime}_{\alpha,n}(t;A)x$, $\sum\limits_{n=0}^{\infty}C^{\prime \prime}_{\alpha,n}(t;A)x$ and $\sum\limits_{n=0}^{\infty}S^{\prime}_{\alpha,n}(t;A)x$, $\sum\limits_{n=0}^{\infty}S^{\prime \prime}_{\alpha,n}(t;A)x$ converge uniformly in every compact interval of $\mathbb{R_{+}}$ to continuous functions which are $C^{\prime}_{\alpha}(t;A)x$, $C^{\prime \prime}_{\alpha}(t;A)x$ and $S^{\prime}_{\alpha}(t;A)x$, $S^{\prime \prime}_{\alpha}(t;A)x$, respectively.

\item Next we prove that $u\in \mathscr{D}(A)$ with initial conditions $u(0)=x$, $u^{\prime}(0)=y$, where $x,y\in \mathscr{D}(A)$ satisfies the abstract Cauchy problem \eqref{eq-3}. Since 
\begin{align*}
&C_{\alpha}(0;A)=I,\quad C_{\alpha,n}(0;A)=0, \quad C^{\prime}_{\alpha}(0;A)=0, \quad C^{\prime}_{\alpha,n}(0;A)=0,\quad  n\in\mathbb{N},\\
&S_{\alpha}(0;A)=0, \quad S_{\alpha,n}(0;A)=0,\quad S^{\prime}_{\alpha}(0;A)=I, \quad S^{\prime}_{\alpha,n}(0;A)=0,\quad  n\in\mathbb{N},
\end{align*}
we have \eqref{csder} and \eqref{ssder}, i.e. initial conditions \eqref{initial} are satisfied. Applying \eqref{usol}, \eqref{Cdefine}-\eqref{Salphan}, it follows that
\begin{align}\label{finu}
u(t)&=C_{\alpha}(t;A)x+S_{\alpha}(t;A)y+\sum_{n=1}^{\infty}\int\limits_{0}^{t}T_{\alpha}(t-s;A)B(s)\Big(C_{\alpha,n-1}(s;A)x+ S_{\alpha,n-1}(s;A)y\Big)\mathrm{d}s\nonumber\\
&=C_{\alpha}(t;A)x+S_{\alpha}(t;A)y+\sum_{n=0}^{\infty}\int\limits_{0}^{t}T_{\alpha}(t-s;A)B(s)\Big(C_{\alpha,n}(s;A)x+ S_{\alpha,n}(s;A)y\Big)\mathrm{d}s\nonumber\\
&=C_{\alpha}(t;A)x+S_{\alpha}(t;A)y+\int\limits_{0}^{t}T_{\alpha}(t-s;A)B(s)u(s)\mathrm{d}s,
\end{align}
where the interchanging of the summation and integration is justified by the uniform convergence of the series. Then integro-differentiating \eqref{finu} term-by-term and by using the formulas \eqref{AA}-\eqref{2.26}, we obtain for $x,y \in\mathscr{D}(A)$:

\begin{align}\label{3.17}
 \prescript{C}{}{\mathcal{D}^{\alpha}_{t}}C_{\alpha}(t;A)x&=\prescript{}{}{\mathcal{I}^{2-\alpha}_{t}}{\mathcal{D}}_{t}^{2}C_{\alpha}(t;A)x=A\prescript{}{}{\mathcal{I}^{2-\alpha}_{t}}{\mathcal{D}}_{t}^{1}T_{\alpha}(t;A)x\nonumber \\
 &=
 A\prescript{}{}{\mathcal{I}^{2-\alpha}_{t}}{\mathcal{D}}_{t}^{1}\left( {\mathcal{I}}_{t}^{\alpha-1}C_{\alpha}(t;A)x\right)=A\prescript{}{}{\mathcal{I}^{2-\alpha}_{t}}\Big( {\mathcal{I}}_{t}^{\alpha-1}AT_{\alpha}(t;A)x+g_{\alpha-1}(t)x\Big)\nonumber\\
 &=A\Big(\prescript{}{}{\mathcal{I}^{1}_{t}}AT_{\alpha}(t;A)x+x\Big)=A\Big(A\prescript{}{}{\mathcal{I}^{1}_{t}}T_{\alpha}(t;A)x+x\Big)=AC_{\alpha}(t;A)x,
\end{align}

\begin{align}
\prescript{C}{}{\mathcal{D}^{\alpha}_{t}}S_{\alpha}(t;A)y&=\prescript{}{}{\mathcal{I}^{2-\alpha}_{t}}{\mathcal{D}}_{t}^{2}S_{\alpha}(t;A)y=\prescript{}{}{\mathcal{I}^{2-\alpha}_{t}}{\mathcal{D}}_{t}^{1}C_{\alpha}(t;A)y\nonumber \\
&=
A\prescript{}{}{\mathcal{I}^{2-\alpha}_{t}} T_{\alpha}(t;A)x=A\prescript{}{}{\mathcal{I}^{2-\alpha}_{t}} {\mathcal{I}}_{t}^{\alpha-1}C_{\alpha}(t;A)y\nonumber\\
&=A\prescript{}{}{\mathcal{I}^{1}_{t}}C_{\alpha}(t;A)x=AS_{\alpha}(t;A)y.
\end{align}
Then, it follows that
\begin{align}\label{Du}
\left( \prescript{C}{}{\mathcal{D}^{\alpha}_{t}}u\right) (t)=AC_{\alpha}(t;A)x+AS_{\alpha}(t;A)y+ \prescript{C}{}{\mathcal{D}^{\alpha}_{t}}\left( \int\limits_{0}^{t}T_{\alpha}(t-s;A)B(s)u(s)\mathrm{d}s\right).
\end{align}
Making use of \eqref{Du} and \eqref{RLandC}, $\left( T_{\alpha}\ast (Bu)\right) (0)=\left( T_{\alpha}\ast (Bu)\right)^{\prime}(0)=0$, the property of $\mathcal{I}^{\alpha}_{t}\left( f\ast g\right) =\left( \mathcal{I}^{\alpha}_{t}f\right) \ast g$ and the semigroup property for operators of Riemann-Liouville fractional integration, we have

\begin{align}\label{Dc}
\prescript{C}{}{\mathcal{D}^{\alpha}_{t}}\Big( T_{\alpha}(t;A)\ast \Big( B(t)u(t)\Big) \Big)&= \prescript{}{}{\mathcal{D}^{\alpha}_{t}}\Big( T_{\alpha}(t;A)\ast \Big( B(t)u(t)\Big) \Big)= \prescript{}{}{\mathcal{D}^{2}_{t}}\mathcal{I}^{2-\alpha}_{t}\Big(\mathcal{I}^{\alpha-1}_{t} C_{\alpha}(t;A)\ast \Big( B(t)u(t)\Big) \Big)\nonumber\\
&=\prescript{}{}{\mathcal{D}^{2}_{t}}\Big(\mathcal{I}^{2-\alpha}_{t}\mathcal{I}^{\alpha-1}_{t} C_{\alpha}(t;A)\Big)\ast \Big( B(t)u(t)\Big)= \prescript{}{}{\mathcal{D}^{1}_{t}}\Big( C_{\alpha}(t;A)\ast \Big( B(t)u(t)\Big) \Big)\nonumber\\&=\Big( \prescript{}{}{\mathcal{D}^{1}_{t}} C_{\alpha}(t;A)\Big) \ast \Big( B(t)u(t)\Big) +B(t)u(t),
\end{align}
where
\begin{align}\label{D}
\prescript{}{}{\mathcal{D}^{1}_{t}} C_{\alpha}(t;A)=T_{\alpha}(t;A)A=AT_{\alpha}(t;A),
\end{align}
which together with \eqref{Du}, \eqref{Dc}, and using the closedness of $A$ implies that $u(t)$ satisfies  \eqref{eq-3} for any $t \in \mathbb{R_{+}}$.
\end{enumerate}
 
 \textit{Uniqueness:} To prove the uniqueness, we let $v:\mathbb{R}_{+}\to \mathscr{D}(A)$ be a solution of \eqref{eq-3} with zero initial conditions $v(0)=v^{\prime}(0)=0$. Then, by using the well-known property \eqref{gcon}, we have $v(t)= \prescript{}{}{\mathcal{I}^{\alpha}_{t}}Av(t)+\prescript{}{}{\mathcal{I}^{\alpha}_{t}}\left( B(t)v(t)\right) $ and applying the variation of parameters formula, $v(t)$ satisfies the Volterra integral equation of second-kind:
 \begin{equation*}
 v(t)=\int\limits_{0}^{t}T_{\alpha}(t-s;A)B(s)v(s)\mathrm{d}s.
 \end{equation*}
 Setting $m_{t}=\sup\limits_{0\leq s\leq t}\|v(s)\|$, we see that for $m_{t}>0$
 \begin{equation*}
 m_{t}\leq \frac{MK_{t}m_{t}}{\Gamma(\alpha)}\int\limits_{0}^{t}(t-s)^{\alpha-1}e^{w(t-s)}\mathrm{d}s\leq MK_{t}m_t g_{\alpha+1}(t)e^{\omega t}< m_{t},
 \end{equation*}
 if $t>0$ is chosen sufficiently small. Thus, $v(t)=0$ on $[0,t_{0}]$ with $t_{0}>0$. Iteration of this argument leads to $v(t)\equiv 0$ for any $t \in \mathbb{R}_{+}$. The proof is complete.
\end{proof}
 
 
 The following lemma plays a crucial role in the proof of Theorem 3.2. 
 \begin{lemma}\label{lemma-2}
Let $A$ be infinitesimal generator of the strongly continuous fractional cosine family $\left\lbrace C_{\alpha}(t;A), t \in \mathbb{R_{+}}\right\rbrace $ and $ T_{\alpha}(\cdot;A):\mathbb{R_{+}}\to \mathcal{L}(\mathbb{X})$ be the corresponding fractional Riemann-Liouville family  on $\mathbb{X}$. For $f(t)$ strongly continuous on $\mathbb{R_{+}}$ to $\mathbb{X}$, $g(t) = \int\limits_{0}^{t}T_{\alpha}(t-s;A)f(s)\mathrm{d}s=\int\limits_{0}^{t}T_{\alpha}(s;A)f(t-s)\mathrm{d}s$ exists and is itself strongly continuous on $\mathbb{R_{+}}$ to $\mathbb{X}$.
 If $f(t)$ is strongly continuously differentiable on $\mathbb{R_{+}}$ to $\mathbb{X}$, then  $g(t)$ is  strongly continuously differentiable for any $t\in\mathbb{R_{+}}$ and
	\begin{align}
	\mathcal{D}_{t}^{1}g(t)&=T_{\alpha}(t;A)f(0)+\int\limits_{0}^{t}T_{\alpha}(t-s;A)f^{\prime}(s)\mathrm{d}s\nonumber\\&=\int\limits_{0}^{t}T^{\prime}_{\alpha}(t-s;A)f(s)\mathrm{d}s, \quad t \in \mathbb{R_{+}},
	\end{align}
where $T^{\prime}_{\alpha}(t;A)$ is defined as in \eqref{Tder}.
 \end{lemma}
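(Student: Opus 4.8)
The plan is to establish the two assertions in turn: first the existence and strong continuity of $g$ for merely strongly continuous $f$, and then the differentiation formula under the stronger hypothesis $f\in\mathbb{C}^{1}(\mathbb{R}_{+},\mathbb{X})$. For the existence part I would rely on the bound $\norm{T_{\alpha}(t;A)}\leq Me^{\omega t}g_{\alpha}(t)$ derived in \eqref{A}. Since $\alpha>1$, the kernel $g_{\alpha}(t)=t^{\alpha-1}/\Gamma(\alpha)$ is continuous on $\mathbb{R}_{+}$ and vanishes at the origin, so on each compact interval $[0,t]$ the integrand $s\mapsto T_{\alpha}(t-s;A)f(s)$ is strongly measurable and dominated by $Me^{\omega t}g_{\alpha}(t-s)\norm{f(s)}$, which is integrable; hence $g(t)$ exists as a Bochner integral. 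For strong continuity I would use the second representation $g(t)=\int_{0}^{t}T_{\alpha}(s;A)f(t-s)\mathrm{d}s$ and split the increment $g(t+h)-g(t)$ into $\int_{0}^{t}T_{\alpha}(s;A)[f(t+h-s)-f(t-s)]\mathrm{d}s$ plus $\int_{t}^{t+h}T_{\alpha}(s;A)f(t+h-s)\mathrm{d}s$; the first tends to $0$ by uniform continuity of $f$ on compacta together with the domination, and the second tends to $0$ because the kernel stays bounded while the interval shrinks.

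For the differentiation formula, assuming now $f\in\mathbb{C}^{1}(\mathbb{R}_{+},\mathbb{X})$, I would form the difference quotient from the representation $g(t)=\int_{0}^{t}T_{\alpha}(s;A)f(t-s)\mathrm{d}s$, namely
\begin{equation*}
\frac{g(t+h)-g(t)}{h}=\int_{0}^{t}T_{\alpha}(s;A)\frac{f(t+h-s)-f(t-s)}{h}\mathrm{d}s+\frac{1}{h}\int_{t}^{t+h}T_{\alpha}(s;A)f(t+h-s)\mathrm{d}s.
\end{equation*}
Passing to the limit $h\to0$, the first integral converges to $\int_{0}^{t}T_{\alpha}(s;A)f^{\prime}(t-s)\mathrm{d}s=\int_{0}^{t}T_{\alpha}(t-s;A)f^{\prime}(s)\mathrm{d}s$ by dominated convergence, since the difference quotients of $f$ are uniformly bounded on compacta and converge to $f^{\prime}$; the averaging term converges to the value $T_{\alpha}(t;A)f(0)$ of the integrand at $s=t$, using strong continuity of $T_{\alpha}(\cdot;A)$ and of $f$. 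This yields the first claimed identity, and its continuity in $t$ follows from the first part of the proof applied to $f^{\prime}$ together with strong continuity of $t\mapsto T_{\alpha}(t;A)f(0)$.

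To obtain the second identity I would integrate the convolution $\int_{0}^{t}T_{\alpha}(t-s;A)f^{\prime}(s)\mathrm{d}s$ by parts in the variable $s$, using $\frac{\mathrm{d}}{\mathrm{d}s}T_{\alpha}(t-s;A)=-T^{\prime}_{\alpha}(t-s;A)$ from \eqref{Tder} together with the boundary value $T_{\alpha}(0;A)=0$, which holds because $g_{\alpha}(0)=0$ for $\alpha>1$. The boundary contributions then collapse to $-T_{\alpha}(t;A)f(0)$, which exactly cancels the free term and leaves $\int_{0}^{t}T^{\prime}_{\alpha}(t-s;A)f(s)\mathrm{d}s$, as asserted.

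The step I expect to be the main obstacle is precisely this last integration by parts. The kernel $T^{\prime}_{\alpha}(\tau;A)$ carries the singular factor $g_{\alpha-1}(\tau)=\tau^{\alpha-2}/\Gamma(\alpha-1)$, which blows up as $\tau\to0^{+}$ when $\alpha\in(1,2)$, so $\int_{0}^{t}T^{\prime}_{\alpha}(t-s;A)f(s)\mathrm{d}s$ is only an improper integral at $s=t$. I would therefore perform the integration by parts on the truncated interval $[0,t-\varepsilon]$ and let $\varepsilon\to0^{+}$, verifying that the intermediate boundary term $T_{\alpha}(\varepsilon;A)f(t-\varepsilon)$ vanishes in the limit since $\norm{T_{\alpha}(\varepsilon;A)}\leq Me^{\omega\varepsilon}g_{\alpha}(\varepsilon)\to0$, and that the singularity $\tau^{\alpha-2}$ is integrable because $\alpha-2>-1$. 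This simultaneously confirms the convergence of the singular integral and legitimizes transferring the derivative from $f$ onto $T_{\alpha}$, completing the proof.
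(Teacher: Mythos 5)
Your proof is correct and follows essentially the same route as the paper's: the bound $\norm{T_{\alpha}(t;A)}\leq Me^{\omega t}g_{\alpha}(t)$ gives existence and strong continuity of the convolution, and integration by parts together with $T_{\alpha}(0;A)=0$ converts $T_{\alpha}(t;A)f(0)+\int_{0}^{t}T_{\alpha}(t-s;A)f^{\prime}(s)\,\mathrm{d}s$ into $\int_{0}^{t}T^{\prime}_{\alpha}(t-s;A)f(s)\,\mathrm{d}s$. The paper merely invokes ``the well-known integration by parts formula'' without addressing the integrable singularity $g_{\alpha-1}(t-s)$ of $T^{\prime}_{\alpha}$ at $s=t$ for $\alpha\in(1,2)$, so your truncation to $[0,t-\varepsilon]$ is a refinement of the same argument rather than a departure from it.
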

 \begin{proof}
Since $\norm{T_{\alpha}(t;A)}\leq M e^{\omega t}g_{\alpha}(t)$ for any $t\in \mathbb{R_{+}}$, it is obvious that $T_{\alpha}(t-s;A)f(s)$ is strongly continuous in $s\in[0,t]$ whenever the same is true of $f(s)$. In this case, $g(t)=\int\limits_{0}^{t}T_{\alpha}(t-s;A)f(s)\mathrm{d}s$ will exist in the strong topology and be equal to $\int\limits_{0}^{t}T_{\alpha}(s;A)f(t-s)\mathrm{d}s$. Moreover, since $f:\mathbb{R_{+}}\to\mathcal{L}(\mathbb{X})$ is strongly continuously differentiable for any $t\in\mathbb{R_{+}}$, by using the well-known integration by parts formula for the operator-valued functions, we obtain the desired result:
\begin{align*}
\int\limits_{0}^{t}T_{\alpha}(t-s;A)f^{\prime}(s)\mathrm{d}s=-T_{\alpha}(t;A)f(0)+\int\limits_{0}^{t}T^{\prime}_{\alpha}(t-s;A)f(s)\mathrm{d}s, \quad t \in \mathbb{R_{+}}.
\end{align*}
 \end{proof}
 \begin{theorem}\label{thm3.2}
 	Let $\left\lbrace C_{\alpha}(t;A): t\in \mathbb{R_{+}}\right\rbrace $ be a strongly continuous fractional cosine family with infinitesimal generator $A$ and $\left\lbrace T_{\alpha}(t;A): t\in \mathbb{R_{+}}\right\rbrace $ be fractional Riemann-Liouville family corresponding to  the $C_{\alpha}$. Let $B(t)$ and $f(t)$ be strongly continuously differentiable functions on $\mathbb{R_{+}}$ to $\mathcal{L}(\mathbb{X})$ and $\mathbb{X}$, respectively.
 	Then there exists a unique $2$-times continuously differentiable function $w:\mathbb{R_{+}}
 	\to \mathscr{D}(A)$ which is a particular solution of equation \eqref{eq-2}
 	with zero initial conditions $w(0)=w^{\prime}(0)=0$.
 	This solution has the closed-form
 	\begin{equation}
 	w(t)=\sum_{n=0}^{\infty}w_{n}(t),
 	\end{equation}
 	where 
 	\allowdisplaybreaks
 	\begin{align*}
 	&w_{0}(t)=\int\limits_{0}^{t}T_{\alpha}(t-s;A)f(s)\mathrm{d}s,\\
 	&w_{n}(t)=\int\limits_{0}^{t}T_{\alpha}(t-s;A)B(s)w_{n-1}(s)\mathrm{d}s, \quad t >0, \quad n \in \mathbb{N}.
 	\end{align*}
 \end{theorem}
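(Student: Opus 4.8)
The plan is to follow the three-step architecture of the proof of Theorem \ref{thm1}, now carrying along the inhomogeneity $f$ in addition to the perturbation $B$. \emph{Step 1 (bounds and convergence).} Writing $N_t:=\sup_{0\le s\le t}\|f(s)\|$ and retaining $K_t:=\sup_{0\le s\le t}\{\|B(s)\|,\|B'(s)\|\}$, I would use the estimate $\|T_\alpha(t;A)\|\le Me^{\omega t}g_\alpha(t)$ from \eqref{A} together with the semigroup identity \eqref{galpha} to prove, by induction on $n$, the bound
\begin{equation*}
\|w_n(t)\|\le M^{n+1}K_t^{\,n}N_t\,e^{\omega t}\,g_{(n+1)\alpha+1}(t),\qquad n\in\mathbb{N}_0 .
\end{equation*}
Summing these majorants gives $\sum_{n}\|w_n(t)\|\le M N_t e^{\omega t}t^{\alpha}E_{\alpha,\alpha+1}(MK_t t^{\alpha})$, so that $\sum_n w_n$ converges uniformly on compact subsets of $\mathbb{R}_+$ and $w$ is well defined and strongly continuous.

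\emph{Step 2 (regularity and $\mathscr{D}(A)$-membership).} Here Lemma \ref{lemma-2} does the essential work. Since $f$ and each $B(\cdot)w_{n-1}(\cdot)$ are strongly continuously differentiable, the lemma gives $\mathcal{D}^1_t w_0(t)=\int_0^t T'_\alpha(t-s;A)f(s)\,\mathrm{d}s$ and $\mathcal{D}^1_t w_n(t)=\int_0^t T'_\alpha(t-s;A)B(s)w_{n-1}(s)\,\mathrm{d}s$ for $n\ge1$, and a second application (differentiating the representation once more, as in the computations producing \eqref{Sder}) yields $\mathcal{D}^2_t w_n$. Inserting the bound \eqref{talpha} for $T'_\alpha$ into these integrals produces, exactly as in Step 2 of Theorem \ref{thm1}, estimates on $\mathcal{D}^1_t w_n$ and $\mathcal{D}^2_t w_n$ whose series converge uniformly on compacta; this justifies term-by-term differentiation and, via \eqref{2.25}--\eqref{2.26} and the closedness of $A$, shows $w\in\mathbb{C}^2(\mathbb{R}_+;\mathscr{D}(A))$. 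Since each $w_n$ is a convolution that vanishes at $t=0$ and $T_\alpha(0;A)=\mathcal{I}^{\alpha-1}_tC_\alpha(t;A)\big|_{t=0}=0$, one reads off $w(0)=w'(0)=0$.

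\emph{Step 3 (the equation and uniqueness).} Summing the recursion and interchanging summation with integration, which the uniform convergence permits, collapses the series into the second-kind Volterra equation
\begin{equation*}
w(t)=\int_0^t T_\alpha(t-s;A)f(s)\,\mathrm{d}s+\int_0^t T_\alpha(t-s;A)B(s)w(s)\,\mathrm{d}s .
\end{equation*}
Applying $\prescript{C}{}{\mathcal{D}^\alpha_t}$ to each convolution and invoking the identity \eqref{Dc}--\eqref{D} established for Theorem \ref{thm1}, namely $\prescript{C}{}{\mathcal{D}^\alpha_t}\big(T_\alpha(\cdot;A)\ast h\big)=\big(AT_\alpha(\cdot;A)\big)\ast h+h$ (whose derivation applies verbatim to any strongly continuously differentiable $h$, since $T_\alpha\ast h$ vanishes to second order at the origin), and then pulling $A$ through the integrals by closedness, I obtain $\prescript{C}{}{\mathcal{D}^\alpha_t}w(t)=A(T_\alpha\ast f)(t)+f(t)+A\big(T_\alpha\ast(Bw)\big)(t)+B(t)w(t)=Aw(t)+B(t)w(t)+f(t)$, the two $A$-integrals recombining into $Aw(t)$ precisely because $w=T_\alpha\ast f+T_\alpha\ast(Bw)$. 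Hence $w$ solves \eqref{eq-2} with $w(0)=w'(0)=0$. Uniqueness is immediate from Theorem \ref{thm1}: the difference of two such solutions solves the homogeneous problem \eqref{eq-3} with zero data and therefore vanishes identically by the Volterra/Gronwall argument already given there.

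The step I expect to be the main obstacle is the regularity analysis of Step 2: controlling the second-derivative estimates for $\mathcal{D}^2_t w_n$ so that the twice-differentiated series still converges uniformly on compacta, and rigorously justifying both $w(t)\in\mathscr{D}(A)$ and the passage of the closed operator $A$ through the convolution integrals. By contrast the Caputo-derivative computation of Step 3, although it is the conceptual heart of the argument, reduces to a direct application of the identities already secured in Theorem \ref{thm1}.
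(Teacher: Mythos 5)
Your proposal is correct and follows essentially the same route as the paper's own proof: the same inductive bound $\|w_n(t)\|\le M^{n+1}K_t^nN_te^{\omega t}g_{n\alpha+\alpha+1}(t)$, the same use of Lemma \ref{lemma-2} and the $T'_\alpha$ estimate \eqref{talpha} for termwise twice-differentiability, the same resummation into the Volterra equation $w=T_\alpha\ast f+T_\alpha\ast(Bw)$ followed by the Caputo-derivative identity \eqref{Dc}, and the same reduction of uniqueness to the homogeneous argument of Theorem \ref{thm1}. The only cosmetic difference is that the paper takes $N_t=\sup_{0\le s\le t}\{\|f(s)\|,\|f'(s)\|\}$ from the outset, since $\|f'\|$ is needed in the second-derivative estimates you defer to Step 2.
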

 
 \begin{proof}
 	It follows since $f\in \mathbb{C}
 	^{1}(\mathbb{R_{+}},\mathbb{X})$, $w_{0}(t)$ is $2$-times strongly continuously differentiable and hence by induction that $w_{n}(t)$ is likewise for any $n \in \mathbb{N}$. In fact, by Lemma \ref{lem3.1}, we have
\allowdisplaybreaks
 	\begin{align*}
 	&w^{\prime}_{0}(t)=\int\limits_{0}^{t}T_{\alpha}^{\prime}(t-s;A)f(s)\mathrm{d}s,\\
 	&w^{\prime \prime}_{0}(t)=T^{\prime}_{\alpha}(t;A)f(0)+\int\limits_{0}^{t}T_{\alpha}^{\prime}(t-s;A)f^{\prime }(s)\mathrm{d}s,\\\\
 	&w^{\prime}_{n}(t)=\int\limits_{0}^{t}T_{\alpha}^{\prime}(t-s;A)B(s)w_{n-1}(s)\mathrm{d}s,\\
 	&w^{\prime \prime}_{n}(t)=\int\limits_{0}^{t}T_{\alpha}^{\prime}(t-s;A)B^{\prime}(s)w_{n-1}(s)\mathrm{d}s+\int\limits_{0}^{t}T_{\alpha}^{\prime}(t-s;A)B(s)w^{\prime }_{n-1}(s)\mathrm{d}s, \quad n \in \mathbb{N}.
 	\end{align*}
 By using \eqref{A} and \eqref{talpha} it is easy to acquire the estimations for $x\in \mathscr{D}(A)$:
 	\begin{align*}
 	&\|w_{n}(t)x\|\leq M^{n+1}K_{t}^{n}N_{t}e^{\omega t}g_{n\alpha+\alpha+1}(t)\norm{x}, \quad n \in \mathbb{N}_{0},\\\\
 	&\|w_{0}^{\prime}(t)x\|\leq MN_{t}e^{\omega t}g_{2\alpha}(t)\norm{Ax}+N_{t}g_{\alpha}(t)\norm{x},\\
 	&\|w_{1}^{\prime}(t)x\|\leq M^{2}K_{t}N_{t}e^{\omega t}g_{3\alpha}(t)\norm{Ax}+MK_{t}N_{t}e^{\omega t}g_{2\alpha}(t)\norm{x},\\
 	&\|w_{n}^{\prime}(t)x\|\leq M^{n+1}K_{t}^{n}N_{t}e^{\omega t}g_{n\alpha+2\alpha}(t)\norm{Ax}+M^{n}K_{t}^{n}N_{t}e^{\omega t}g_{n\alpha+\alpha}(t)\norm{x},\quad n \in \mathbb{N},\\\\
 	&\|w_{0}^{\prime \prime}(t)x\|\leq MN_{t}e^{\omega t}\Big[g_{2\alpha-1}(t)+g_{2\alpha}(t)\Big]\norm{Ax}+N_{t}\Big[g_{\alpha-1}(t)+g_{\alpha}(t)\Big]\norm{x},\\
 	&\|w_{1}^{\prime \prime}(t)x\|\leq M^{2}K_{t}N_{t}e^{\omega t}g_{4\alpha-1}(t)\norm{A^{2}x}+MK_{t}N_{t}e^{\omega t}\Big[2g_{3\alpha-1}(t)+	Mg_{3\alpha}(t)\Big]\norm{Ax}\\&\hspace{+1.45 cm}+K_{t}N_{t}\Big[Me^{\omega t}g_{2\alpha}(t)+g_{2\alpha-1}(t)\Big]\norm{x},\\
 	&\|w_{n}^{\prime \prime}(t)\|\leq M^{n+1}K_{t}^{n}N_{t}e^{\omega t}g_{n\alpha+3\alpha-1}(t)\norm{A^{2}x}+M^{n}K_{t}^{n}N_{t}e^{\omega t}\Big[(n+1)g_{n\alpha+2\alpha-1}(t)+Mg_{n\alpha+2\alpha}(t)\Big]\norm{Ax}\\&\hspace{+1.4 cm}+K_{t}^{n}N_{t}\Big[M^{n}e^{\omega t}g_{n\alpha+\alpha}(t)+g_{n\alpha+\alpha-1}(t)\Big]\norm{x}, \quad n \in \mathbb{N},
 	\end{align*}
 	where $N_{t}\coloneqq \sup\limits_{0 \leq s \leq t}\Biggl\{\norm{f(s)}, \norm{f^{\prime}(s)}\Biggr\}$.
 	
 	If we set $w(t)=\sum\limits_{n=0}^{\infty}w_{n}(t)$, then as in Theorem \ref{thm1}, $w(t)$ is 2-times strongly continuously differentiable on $\mathbb{R_{+}}$ to $\mathscr{D}(A)$ and $w^{\prime}(t)=\sum\limits_{n=0}^{\infty}w^{\prime}_{n}(t)$, $w^{\prime \prime}(t)=\sum\limits_{n=0}^{\infty}w^{\prime \prime}_{n}(t)$. Furthermore, $w(0)=w^{\prime}(0)=0$. From the definition of $w_{n}(t)$, $n\in \mathbb{N}_{0}$ and uniform convergence of the series $\sum\limits_{n=0}^{\infty}w_{n}(t)$ in every finite interval of $\mathbb{R_{+}}$, it follows that
 	\begin{align}
 		w(t)=\sum_{n=0}^{\infty}w_{n}(t)=\sum_{n=1}^{\infty}w_{n}(t)+w_{0}(t)&=w_{0}(t)+\sum_{n=1}^{\infty}\int\limits_{0}^{t}T_{\alpha}(t-s;A)B(s)w_{n-1}(s)\mathrm{d}s\nonumber\\
 		&=w_{0}(t)+\int\limits_{0}^{t}T_{\alpha}(t-s;A)B(s)\sum_{n=1}^{\infty}w_{n-1}(s)\mathrm{d}s\nonumber\\
 		&=w_{0}(t)+\int\limits_{0}^{t}T_{\alpha}(t-s;A)B(s)\sum_{n=0}^{\infty}w_{n}(s)\mathrm{d}s\nonumber\\
 		&=w_{0}(t)+\int\limits_{0}^{t}T_{\alpha}(t-s;A)B(s)w(s)\mathrm{d}s, \label{special1}
 	\end{align}
 where the interchanging summation and integration is justified by the uniform convergence of the series.
 	 Since $w(t)$ is $2$-times strongly continuously differentiable, we may differentiate \eqref{special1} term-wise in Caputo's sense by considering $C_{\alpha}(0;A)=I$, using \eqref{RLandC} and \eqref{Dc} as follows:
 	 \begin{align*}
 	 \prescript{C}{}{\mathcal{D}^{\alpha}_{t}}w(t)&=\prescript{C}{}{\mathcal{D}^{\alpha}_{t}}\Big(w_{0}(t)+\int\limits_{0}^{t}T_{\alpha}(t-s;A)B(s)w(s)\mathrm{d}s\Big)\\
 	 &=\prescript{C}{}{\mathcal{D}^{\alpha}_{t}}\Big(T_{\alpha}(t;A) \ast f(t)\Big)+ \prescript{C}{}{\mathcal{D}^{\alpha}_{t}}\Big(T_{\alpha}(t;A) \ast\Big(B(t)w(t)\Big)\Big)\\
 	 &=\prescript{}{}{\mathcal{D}^{\alpha}_{t}}\Big(T_{\alpha}(t;A) \ast f(t)\Big)+ \prescript{}{}{\mathcal{D}^{\alpha}_{t}}\Big(T_{\alpha}(t;A) \ast\Big(B(t)w(t)\Big)\Big)\\
 	 &=AT_{\alpha}(t;A)\ast f(t)+f(t)+AT_{\alpha}(t;A)\ast \Big(B(t)w(t)\Big)+B(t)w(t)\\&=
 	 Aw_{0}(t)+f(t)+A\int\limits_{0}^{t}T_{\alpha}(t-s;A)B(s)w(s)\mathrm{d}s+B(t)w(t)\\
 	  &=\Big(A+B(t) \Big)w(t)+f(t). 
 	 \end{align*}
 	Note that since initial values are zero we have used Riemann-Liouville fractional derivative instead of Caputo one. This shows immediately that $w(t)$ is a particular solution of \eqref{eq-2}. The uniqueness of solution follows precisely as in the \textit{uniqueness} proof of Theorem \ref{thm1}. The proof is complete.
 \end{proof}
	\begin{remark}
	Considering the homogeneous equation with the second initial condition equal to zero, we derive the special case published in \cite{bazhlekova2} by Bazhlekova. The reason is that in some models (e.g. for waves in viscoelastic media) there are some physical arguments to choose the second initial condition equal to zero in her thesis \cite{bazhlekova}. Therefore, our theory generalizes those results.
\end{remark}
\begin{itemize}
	\item  In this special case, if $B(t)\equiv B\in \mathcal{L}(\mathbb{X})$, then for any $x,y\in\mathscr{D}(A)$, the solution to the following abstract Cauchy problem for fractional evolution equation
in a Banach space $\mathbb{X}$:
\begin{align}\label{ABeq}
\begin{cases}
\prescript{C}{}{\mathcal{D}^{\alpha}_{t}}u(t)=(A+B)u(t)+f(t), \quad t>0,\\
u(0)=x,\quad u^{\prime}(0)=y,
\end{cases}
\end{align}
can be written as follows:
\begin{align*}
u(t)=C_{\alpha}(t;A+B)x+S_{\alpha}(t;A+B)y+w(t), \quad t>0,
\end{align*}
where
\allowdisplaybreaks
\begin{align*}
&C_{\alpha}(t;A+B)x=\sum_{n=0}^{\infty}C_{\alpha,n}(t,A)x,\\
&S_{\alpha}(t;A+B)x=\sum_{n=0}^{\infty}S_{\alpha,n}(t;A)x,\quad x\in \mathbb{X},\\
&w(t)=\sum_{n=0}^{\infty}w_{n}(t),
\end{align*} 
with
\allowdisplaybreaks
\begin{align*}
&C_{\alpha,0}(t;A)x=C_{\alpha}(t;A)x,\quad S_{\alpha,0}(t;A)x=S_{\alpha}(t;A)x, \quad w_{0}(t)=\int\limits_{0}^{t}T_{\alpha}(t-s)f(s)\mathrm{d}s,\nonumber\\
&C_{\alpha,n}(t;A)x= \int\limits_{0}^{t}T_{\alpha}(t-s;A)BC_{\alpha,n-1}(s;A)x\mathrm{d}s,\\
&S_{\alpha,n}(t;A)x= \int\limits_{0}^{t}T_{\alpha}(t-s;A)BS_{\alpha,n-1}(s;A)x\mathrm{d}s,\\
&w_{n}(t)=\int\limits_{0}^{t}T_{\alpha}(t-s)Bw_{n-1}(s)\mathrm{d}s,\quad n\in \mathbb{N},\quad x\in \mathbb{X}.
\end{align*}
In this case, the particular solution $w(t)$ of \eqref{ABeq} can also be put in a more suggestive form by stating and proving the following theorem.

\begin{theorem}
	Let $A$ be infinitesimal generator of $2$-times strongly continuous fractional family of linear operators $C_{\alpha}(t;A)$ on $\mathbb{R_{+}}$. Let $B\in \mathcal{L}(\mathbb{X})$ and $f(t)$ be strongly continuously differentiable function on $\mathbb{R_{+}}$ to $\mathbb{X}$. Then there exists a unique $2$-times continuously differentiable function $w:\mathbb{R_{+}}\to \mathscr{D}(A)$ which is a particular solution of equation \eqref{ABeq} with zero initial conditions $w(0)=w^{\prime}(0)=0$.
This solution has a variation of constants formula
\begin{equation}\label{a}
	w(t)=\int\limits_{0}^{t}T_{\alpha}(t-s;A+B)f(s)\mathrm{d}s, \quad t>0,
\end{equation}
where 
\begin{equation}\label{TALPHA}
T_{\alpha}(t;A+B)=\int\limits_{0}^{t}g_{\alpha-1}(t-s)C_{\alpha}(s;A+B)\mathrm{d}s.
\end{equation}
\end{theorem}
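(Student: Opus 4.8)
The plan is to show that the two available descriptions of the particular solution coincide: the Neumann-type series produced by Theorem \ref{thm3.2} specialized to $B(t)\equiv B$, and the variation of constants formula \eqref{a}. By Theorem \ref{thm3.2} the unique $2$-times continuously differentiable function $w:\mathbb{R_{+}}\to\mathscr{D}(A)$ solving \eqref{ABeq} with $w(0)=w'(0)=0$ is $w(t)=\sum_{n=0}^{\infty}w_n(t)$, where $w_0=T_\alpha(\cdot;A)\ast f$ and $w_n=T_\alpha(\cdot;A)\ast(Bw_{n-1})$ for $n\in\mathbb{N}$. Thus existence, uniqueness, $\mathscr{D}(A)$-valuedness and twofold differentiability are already secured, and the entire task reduces to the algebraic identity $\sum_{n=0}^{\infty}w_n=T_\alpha(\cdot;A+B)\ast f$.

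First I would expand $T_\alpha(\cdot;A+B)$ into its own series. Setting $T_{\alpha,n}(t;A):=\bigl(g_{\alpha-1}\ast C_{\alpha,n}(\cdot;A)\bigr)(t)$ for $n\in\mathbb{N}_{0}$, the definition \eqref{TALPHA} combined with $C_\alpha(\cdot;A+B)=\sum_{n=0}^{\infty}C_{\alpha,n}(\cdot;A)$ from \eqref{Cdefine} yields $T_\alpha(t;A+B)=\sum_{n=0}^{\infty}T_{\alpha,n}(t;A)$; the interchange of the convolution against $g_{\alpha-1}$ with the infinite sum is legitimate because the series \eqref{Cdefine} converges uniformly on compact subsets of $\mathbb{R_{+}}$ by Theorem \ref{thm1}. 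Observe that $T_{\alpha,0}(\cdot;A)=T_\alpha(\cdot;A)$ by \eqref{Talpha}.

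The heart of the argument is the claim $w_n=T_{\alpha,n}(\cdot;A)\ast f$ for every $n$, which I would prove by induction. The base case $n=0$ is immediate from $w_0=T_\alpha(\cdot;A)\ast f=T_{\alpha,0}(\cdot;A)\ast f$. For the inductive step, using that $B$ is constant and that the scalar kernel $g_{\alpha-1}$ commutes with operator-valued convolutions, I would combine $w_n=T_\alpha(\cdot;A)\ast(Bw_{n-1})$ with the inductive hypothesis and associativity of convolution to get $w_n=\bigl(T_\alpha(\cdot;A)\ast(BT_{\alpha,n-1})\bigr)\ast f$, and then compute $T_\alpha(\cdot;A)\ast(BT_{\alpha,n-1})=T_\alpha(\cdot;A)\ast g_{\alpha-1}\ast(BC_{\alpha,n-1})=g_{\alpha-1}\ast\bigl(T_\alpha(\cdot;A)\ast(BC_{\alpha,n-1})\bigr)=g_{\alpha-1}\ast C_{\alpha,n}=T_{\alpha,n}$, where the penultimate equality is precisely \eqref{Calphan} with $B(s)\equiv B$. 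Summing over $n$ and interchanging the sum with the integral, again justified by uniform convergence on compacts, gives $\sum_{n=0}^{\infty}w_n=\bigl(\sum_{n=0}^{\infty}T_{\alpha,n}\bigr)\ast f=T_\alpha(\cdot;A+B)\ast f$, which is exactly \eqref{a}.

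The main obstacle is not the bookkeeping but the rigorous justification of the convolution algebra: one must verify that the scalar factor $g_{\alpha-1}$ may be pulled through the operator-valued convolutions and through the constant operator $B$, that the convolution of these mixed scalar/operator/vector-valued objects is associative, and that every summation-integration interchange is valid. Each of these rests on the uniform operator bounds and the uniform convergence on compact intervals established in Theorems \ref{thm1} and \ref{thm3.2}, which simultaneously guarantee that the object defined by \eqref{a} is twice continuously differentiable and takes values in $\mathscr{D}(A)$. Uniqueness of $w$ is then inherited verbatim from Theorem \ref{thm3.2}.
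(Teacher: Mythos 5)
Your argument is correct, but it takes a genuinely different route from the paper's. You reduce the whole theorem to Theorem \ref{thm3.2} specialized to $B(t)\equiv B$ and then prove the purely algebraic identity $\sum_{n}w_{n}=T_{\alpha}(\cdot;A+B)\ast f$ by expanding $T_{\alpha}(\cdot;A+B)=\sum_{n}g_{\alpha-1}\ast C_{\alpha,n}(\cdot;A)$ and showing inductively that $w_{n}=\bigl(g_{\alpha-1}\ast C_{\alpha,n}(\cdot;A)\bigr)\ast f$; the induction step correctly uses associativity of convolution, the fact that the constant operator $B$ passes through the scalar convolution, and the recursion \eqref{Calphan} with constant $B$. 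The paper instead verifies the formula \eqref{a} directly: it bounds $\norm{T_{\alpha}(t;A+B)}$ by $Me^{\omega t}t^{\alpha-1}E_{\alpha,\alpha}(M\norm{B}t^{\alpha})$, establishes explicit Mittag--Leffler estimates for $w$, $w^{\prime}$, $w^{\prime\prime}$ via Lemma \ref{lemma-2} and \eqref{Tder}, and then computes $\prescript{C}{}{\mathcal{D}^{\alpha}_{t}}\bigl(T_{\alpha}(\cdot;A+B)\ast f\bigr)$ using $\prescript{C}{}{\mathcal{D}^{\alpha}_{t}}C_{\alpha}(t;A+B)=(A+B)C_{\alpha}(t;A+B)$ and $\mathcal{D}^{1}_{t}C_{\alpha}(t;A+B)=(A+B)T_{\alpha}(t;A+B)$ to land on $(A+B)w+f$. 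Your route is shorter and makes the relationship between the Neumann series and the variation-of-constants formula explicit, inheriting existence, uniqueness and regularity for free; the paper's route is self-contained and yields, as a by-product, the quantitative Mittag--Leffler bounds on $w$ and its derivatives that are of independent interest. The only points you should make fully explicit are the local integrability of $g_{\alpha-1}$ for $\alpha\in(1,2]$ (needed for the interchange of $g_{\alpha-1}\ast(\cdot)$ with the infinite sum and for the associativity of the mixed convolutions) and the observation that a constant $B$ is trivially strongly continuously differentiable, so that Theorem \ref{thm3.2} indeed applies.
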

\begin{proof}
	From $\norm{C_{\alpha}(t;A+B)}\leq M e^{\omega t} E_{\alpha}\left(M\norm{B} t^{\alpha}\right)$ and \eqref{TALPHA}, we have 
	\allowdisplaybreaks
	\begin{align}
	\norm{T_{\alpha}(t;A+B)}&\leq \int\limits_{0}^{t}g_{\alpha-1}(t-s)\norm{C_{\alpha}(s;A+B)}\mathrm{d}s\nonumber\\
	&\leq M\int\limits_{0}^{t}\frac{(t-s)^{\alpha-2}}{\Gamma(\alpha-1)}e^{\omega s} E_{\alpha}\left(M\norm{B} s^{\alpha}\right)\mathrm{d}s\nonumber\nonumber\\&\leq e^{\omega t}\sum_{n=0}^{\infty}M^{n+1}\norm{B}^{n}\int\limits_{0}^{t}\frac{(t-s)^{\alpha-2}}{\Gamma(\alpha-1)}\frac{ s^{n\alpha}}{\Gamma(n\alpha+1)}\mathrm{d}s\nonumber\\
	&= e^{\omega t}\sum_{n=0}^{\infty}M^{n+1}\norm{B}^{n}\frac{t^{n\alpha+\alpha-1}}{\Gamma(n\alpha+\alpha)}\nonumber\\
	&=Me^{\omega t}t^{\alpha-1}E_{\alpha,\alpha}\left(M\norm{B} t^{\alpha}\right).\label{formula-T}
	 \end{align}
 
In fact, by Lemma \ref{lemma-2} and formula \eqref{Tder}, we have
\allowdisplaybreaks
\begin{align*}
&w^{\prime}(t)=\int\limits_{0}^{t}T_{\alpha}(t-s;A+B)f^{\prime}(s)\mathrm{d}s+T_{\alpha}(t;A+B)f(0),\\
&w^{\prime \prime}(t)=T^{\prime}_{\alpha}(t;A+B)f(0)+\int\limits_{0}^{t}T^{\prime}_{\alpha}(t-s;A+B)f^{\prime}(s)\mathrm{d}s,\\
&T^{\prime}_{\alpha}(t;A+B)=g_{\alpha-1}(t)\ast C^{\prime}_{\alpha}(t;A+B)+g_{\alpha-1}(t),\\
&C^{\prime}_{\alpha}(t;A+B)=AT_{\alpha}(t;A)+\int\limits_{0}^{t}T^{\prime}_{\alpha}(t-s;A)BC_{\alpha}(s;A+B)\mathrm{d}s.
\end{align*}
It is now easy to acquire the estimations for $n\in \mathbb{N}_{0}$:
\begin{align*}
&\|w(t)x\|\leq MN_{t}e^{\omega t}t^{\alpha}E_{\alpha,\alpha+1}(M\norm{B}t^{\alpha})\norm{x},\\
&\|w^{\prime}(t)x\|\leq MN_{t}e^{\omega t}\Big[t^{\alpha-1}E_{\alpha,\alpha}(M\norm{B}t^{\alpha})+t^{\alpha}E_{\alpha,\alpha+1}(M\norm{B}t^{\alpha})\Big]\norm{x},\\
&\|w^{\prime \prime}(t)x\|\leq MN_{t}e^{\omega t}\Big[g_{2\alpha-1}(t)+g_{2\alpha}(t)\Big]\norm{Ax}+N_{t}\Big[g_{\alpha-1}(t)+g_{\alpha}(t)\Big]\norm{x}\\
&+M^{2}\norm{B}N_{t}e^{\omega t}\Big[t^{3\alpha-1}E_{\alpha,3\alpha}(M\norm{B}t^{\alpha})+t^{3\alpha-2}E_{\alpha,3\alpha-1}(M\norm{B}t^{\alpha})\Big]\norm{Ax}\\
&+M\norm{B}N_{t}e^{\omega t}\Big[t^{2\alpha-1}E_{\alpha,2\alpha}(M\norm{B}t^{\alpha})+t^{2\alpha-2}E_{\alpha,2\alpha-1}(M\norm{B}t^{\alpha})\Big]\norm{x},\quad x \in \mathscr{D}(A),
\end{align*}
where we have used the following estimations:
\begin{align*}
\|C_{\alpha}^{\prime}(t;A+B)x\|&\leq M^{2}\norm{B}e^{\omega t}t^{2\alpha-1}E_{\alpha,2\alpha}(M\norm{B}t^{\alpha})\norm{Ax}\\&+M\norm{B}e^{\omega t}t^{\alpha-1}E_{\alpha,\alpha}(M\norm{B}t^{\alpha})\norm{x}+Me^{\omega t}g_{\alpha}(t)\norm{Ax},
\end{align*}
\begin{align*}
&\|T_{\alpha}^{\prime}(t;A+B)x\|\leq M^{2}\norm{B}e^{\omega t}t^{3\alpha-2}E_{\alpha,3\alpha-1}(M\norm{B}t^{\alpha})\norm{Ax}\\&+M\norm{B}e^{\omega t}t^{2\alpha-2}E_{\alpha,2\alpha-1}(M\norm{B}t^{\alpha})\norm{x}+Me^{\omega t}g_{2\alpha-1}(t)\norm{Ax}+g_{\alpha-1}(t)\norm{x}, \quad x \in \mathscr{D}(A),
\end{align*}

with  $N_{t}\coloneqq \sup\limits_{0 \leq s \leq t}\Biggl\{\norm{f(s)}, \norm{f^{\prime}(s)}\Biggr\}$.

Therefore, $w(t)$ is 2-times strongly continuously differentiable function on $\mathbb{R_{+}}$.
Next, we prove that $w(t)$ satisfies \eqref{ABeq} with $w(0)=w^{\prime}(0)=0$. By making use of the formulas \eqref{RLandC},\eqref{a} and \eqref{TALPHA}, we derive that

	\begin{align*}
	\prescript{C}{}{\mathcal{D}^{\alpha}_{t}}w(t)&=\prescript{C}{}{\mathcal{D}^{\alpha}_{t}}\Big(\int\limits_{0}^{t}T_{\alpha}(t-s;A+B)f(s)\mathrm{d}s\Big)=\prescript{C}{}{\mathcal{D}^{\alpha}_{t}}\Big(T_{\alpha}(t;A+B) \ast f(t)\Big) =\prescript{}{}{\mathcal{D}^{\alpha}_{t}}\Big(T_{\alpha}(t;A+B) \ast f(t)\Big)\\
	&=\prescript{}{}{\mathcal{D}^{2}_{t}}\prescript{}{}{\mathcal{I}^{2-\alpha}_{t}}\Big( \prescript{}{}{\mathcal{I}^{\alpha-1}_{t}}C_{\alpha}(t;A+B)\Big)\ast f(t) 
	=\prescript{}{}{\mathcal{D}^{2}_{t}}\prescript{}{}{\mathcal{I}^{2-\alpha}_{t}} \prescript{}{}{\mathcal{I}^{\alpha-1}_{t}}\Big(C_{\alpha}(t;A+B)\ast f(t) \Big)\\
	&=\prescript{}{}{\mathcal{D}^{2}_{t}}\prescript{}{}{\mathcal{I}^{1}_{t}}\Big(C_{\alpha}(t;A+B)\ast f(t) \Big)\
	=\prescript{}{}{\mathcal{D}^{1}_{t}}\Big( C_{\alpha}(t;A+B)\ast f(t) \Big)\\& =\prescript{}{}{\mathcal{D}^{1}_{t}} C_{\alpha}(t;A+B)\ast f(t) +C_{\alpha}(0;A+B)f(t).
	\end{align*}
In a similar to \eqref{3.17}, we have
\begin{align}\label{end}
	\prescript{C}{}{\mathcal{D}^{\alpha}_{t}}C_{\alpha}(t;A+B)=(A+B)C_{\alpha}(t;A+B)=C_{\alpha}(t;A+B)(A+B).
\end{align}
	Since $C_{\alpha}(0;A+B)=I$ and using \eqref{end}, we attain 
	
	\begin{align*}
	\prescript{}{}{\mathcal{D}^{1}_{t}}C_{\alpha}(t;A+B)&=\prescript{}{}{\mathcal{I}^{1}_{t}}	\prescript{}{}{\mathcal{D}^{2}_{t}}C_{\alpha}(t;A+B)=	\prescript{}{}{\mathcal{I}^{\alpha-1}_{t}}\prescript{}{}{\mathcal{I}^{2-\alpha}_{t}}\prescript{}{}{\mathcal{D}^{2}_{t}}C_{\alpha}(t;A+B)
	=\prescript{}{}{\mathcal{I}^{\alpha-1}_{t}}\prescript{C}{}{\mathcal{D}^{\alpha}_{t}}C_{\alpha}(t;A+B)\\&=\prescript{}{}{\mathcal{I}^{\alpha-1}_{t}}\left( A+B\right) C_{\alpha}(t;A+B)=\prescript{}{}{\mathcal{I}^{\alpha-1}_{t}} C_{\alpha}(t;A+B)\left( A+B\right)\\&=(A+B)T_{\alpha}(t;A+B)=T_{\alpha}(t;A+B)(A+B).
	\end{align*}
	So, we derive a desired result:
	\begin{align*}
	\prescript{C}{}{\mathcal{D}^{\alpha}_{t}}w(t)&=\Big((A+B)T_{\alpha}(t;A+B)\Big)\ast f(t)+f(t)=\left( A+B\right)\int\limits_{0}^{t}T_{\alpha}(t-s;A+B)f(s)\mathrm{d}s+f(t)\\
	&=\Big( A+B\Big)w(t)+f(t).
	\end{align*}
	The proof is complete.
\end{proof}
\item In the particular case, if $B=0$, then the solution of the following abstract initial value problem for fractional evolution equation in a Banach space $\mathbb{X}$ which is fulfilling $u(0)=x$, $u^{\prime}(0)=y$ for $x, y \in \mathscr{D}(A)$:

\begin{equation}\label{eq-1}
\left( \prescript{C}{}{\mathcal{D}^{\alpha}_{t}}u\right) (t)=Au(t), \quad t>0,
\end{equation}
where a linear operator $A: \mathscr{D}(A)\subseteq \mathbb{X} \to \mathbb{X}$ is the infinitesimal generator of an operator fractional cosine function $C_{\alpha}$,

\begin{equation*}
u(t)=C_{\alpha}(t;A)x+S_{\alpha}(t;A)y, \quad t \in \mathbb{R}_{+},
\end{equation*}
where $S_{\alpha}:\mathbb{R}_{+}\to \mathcal{L}(\mathbb{X})$ denotes the fractional sine function associated with $C_{\alpha}$, defined by 

\begin{align*}
S_{\alpha}(t;A)x\coloneqq \int\limits_{0}^{t}C_{\alpha}(s;A)x\mathrm{d}s.
\end{align*}
It is important to note that this case have considered by Li in \cite{kexue1}. Furthermore, the same particular case has studied by Li et al. in \cite{kexue2} with the first initial condition is zero, i.e., $u(0)=0$ and by Chen and Li in \cite{chen} with the second initial condition is zero, i.e., $u^{\prime}(0)=0$.
\end{itemize}

\section{Special cases}\label{sec4}
In this section, we consider several special cases and remarks.

 \textbf{Case 1}. \textit{Classical case}: $\alpha=2$. 
 We consider the following abstract Cauchy problem for a second-order linear inhomogeneous evolution equation in a Banach space $\mathbb{X}$:
\begin{equation}\label{clas}
\begin{cases}
u^{\prime \prime}(t)= \left( A+B(t)\right) u(t) + f(t), \quad t\in \mathbb{R},\\
u(0)=x,\quad u^{\prime}(0)=y.
\end{cases}
\end{equation}
It should be note that in the case of $\alpha=2$, $T_{\alpha}(t;A)$ and $S_{\alpha}(t;A)$ coincide with the strongly continuous sine function $S(t;A)$. Furthermore, for $\alpha=2$, one and two parameter Mittag-Leffler type functions are converting to the hyperbolic cosine and sine functions, respectively:
\begin{align}
E_{2}(MK_{t}t^{2})=\sum_{k=0}^{\infty}\frac{M^{k}K^{k}_{t}t^{2k}}{(2k)!}=\cosh\Big(\sqrt{MK_{t}}t\Big), \quad t \in \mathbb{R},\\
tE_{2,2}(MK_{t}t^{2})=\sum_{k=0}^{\infty}\frac{M^{k}K^{k}_{t}t^{2k+1}}{(2k+1)!}=\frac{1}{\sqrt{MK_{t}}}\sinh\Big(\sqrt{MK_{t}}t\Big), \quad t \in \mathbb{R}.
\end{align}

\begin{theorem}\label{thm1-c}
	Let $A$ be the infinitesimal generator of the  an operator cosine function $\left\lbrace C(t;A): t \in \mathbb{R}\right\rbrace $ with \\ $\|C(t;A)\|\leq Me^{w|t|}$ for all $t \in \mathbb{R}$, and let $\left\lbrace S(t;A): t \in \mathbb{R}\right\rbrace $ denote the sine function associated with $C(t;A)$. Furthermore, suppose that $B:\mathbb{R}\to \mathcal{L}(\mathbb{X})$ and $f:\mathbb{R}\to \mathbb{X}$ are strongly continuously differentiable. Then
	
	\begin{itemize}
		\item The Cauchy problem for the $\mathbb{X}$-valued differential equation \eqref{clas}
		is uniformly well-posed; more precisely, for each pair $x,y\in \mathscr{D}(A)$ there is a uniquely determined solution $u \in \mathbb{C}^{2}\left(\mathbb{R};\mathscr{D}(A) \right)$ of \eqref{clas} fulfilling initial conditions and $u\in\mathscr{D}(A)$ depends continuously (with respect to the topology of uniform convergence) on compact subsets of $\mathbb{R}$ upon $(x,y)$. 
		\item $u\in\mathscr{D}(A)$ is given by
		\begin{equation}\label{usol-}
		u(t)= C(t;A+B)x+S(t;A+B)y+w(t), \quad t \in \mathbb{R},
		\end{equation}
		where for $x\in \mathscr{D}(A)$, $t \mapsto C(t;A+B)x$ and $t\mapsto S(t;A+B)x$ are $2$-times strongly continuously differentiable solutions of \eqref{clas} fulfilling 
		\begin{align}\label{csder-}
		&C(0;A+B)x=x, \quad C^{\prime}(0;A+B)x=0,\\
		&S(0;A+B)x=0, \quad S^{\prime}(0;A+B)x=x.
		\end{align}
		\item $C(\cdot;A+B),S(\cdot;A+B)\in \mathcal{L}(\mathbb{X})$  are defined by
		\begin{align}
		&C(t;A+B)x\coloneqq \sum_{n=0}^{\infty}C_{n}(t,A)x,\quad x\in\mathbb{X},\label{Cdefine-c}\\
		&S(t;A+B)x\coloneqq \sum_{n=0}^{\infty}S_{n}(t;A)x, \quad x\in \mathbb{X}\label{Sdefine-c},
		\end{align}
		where  
		\begin{align}
		&C_{0}(t;A)\coloneqq C(t;A),\quad S_{0}(t;A)\coloneqq S(t;A),\nonumber\\
		&C_{n}(t;A)x\coloneqq \int\limits_{0}^{t}S(t-s;A)B(s)C_{n-1}(s;A)x\mathrm{d}s\label{Calphan-c},\\
		&S_{n}(t;A)x\coloneqq \int\limits_{0}^{t}S(t-s;A)B(s)S_{n-1}(s;A)x\mathrm{d}s\label{Salphan-c}, \quad n\in \mathbb{N}, \quad x\in \mathbb{X}.
		\end{align}
		 \item A particular solution $w
		\in \mathscr{D}(A)$ of \eqref{clas} is defined by
	
		\begin{equation}
		w(t)=\sum_{n=0}^{\infty}w_{n}(t),
		\end{equation}
		where 
		\begin{align*}
		&w_{0}(t)=\int\limits_{0}^{t}S(t-s;A)f(s)\mathrm{d}s,\\
		&w_{n}(t)=\int\limits_{0}^{t}S(t-s;A)B(s)w_{n-1}(s)\mathrm{d}s, \quad n \in \mathbb{N}.
		\end{align*}
		
		\item With $K_{t}\coloneqq \sup\limits_{0\leq s \leq t}\Biggl\{ \norm{B(s)},\norm{B^{\prime}(s)}\Biggr\}$, we have for all $s\in [0,t]$, the bounds
		
		\begin{align*}
		&\norm{C(s;A+B)} \leq Me^{w|s|}\cosh\Big(\sqrt{MK_{t}}s\Big),\\
		&\norm{S(s;A+B)} \leq Me^{w|s|}\frac{1}{\sqrt{MK_{t}}}\sinh\Big(\sqrt{MK_{t}}|s|\Big),\\
		&\norm{C(s;A+B)-C(s;A)} \leq Me^{w|s|}\Big[\cosh\Big(\sqrt{MK_{t}}s\Big)-1\Big],\\
		&\norm{S(s;A+B)-S(s;A)} \leq Me^{w|s|}\Big[\frac{1}{\sqrt{MK_{t}}}\sinh\Big(\sqrt{MK_{t}}|s|\Big)-|s|\Big]. 
		\end{align*}
	\end{itemize}
\end{theorem}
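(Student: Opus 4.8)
The plan is to obtain Theorem~\ref{thm1-c} as the specialization $\alpha=2$ of the fractional results already established in Theorem~\ref{thm1} and Theorem~\ref{thm3.2}, supplemented by an extension from $\mathbb{R_{+}}$ to $\mathbb{R}$ that exploits the symmetry of the classical cosine and sine families. The essential observation is that for $\alpha=2$ the fractional Riemann--Liouville family collapses onto the sine family: since $g_{\alpha-1}(t)=g_{1}(t)=1$ for $t>0$, the defining formula \eqref{Talpha} gives
\begin{equation*}
T_{2}(t;A)x=\int\limits_{0}^{t}g_{1}(t-s)C(s;A)x\,\mathrm{d}s=\int\limits_{0}^{t}C(s;A)x\,\mathrm{d}s=S(t;A)x,
\end{equation*}
so that $T_{\alpha}(t;A)$ and $S_{\alpha}(t;A)$ both coincide with $S(t;A)$, while $C_{\alpha}(t;A)=C(t;A)$.

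With this identification the recursions \eqref{Calphan}--\eqref{Salphan} defining the perturbation series reduce verbatim to \eqref{Calphan-c}--\eqref{Salphan-c}, and the same collapse applies to the particular-solution iterates $w_{n}$ of Theorem~\ref{thm3.2}. I would therefore argue as follows. For $t\in\mathbb{R_{+}}$ the uniform convergence of the series \eqref{Cdefine-c}--\eqref{Sdefine-c} on compact sets, the strong continuity and twice-continuous differentiability of $t\mapsto C(t;A+B)x$ and $t\mapsto S(t;A+B)x$ for $x\in\mathscr{D}(A)$, the initial relations \eqref{csder-}, and the verification that $u$ in \eqref{usol-} solves \eqref{clas}, are all direct transcriptions of the corresponding steps in the proofs of Theorem~\ref{thm1} and Theorem~\ref{thm3.2} at $\alpha=2$; in particular \eqref{3.17} becomes the classical identity $\mathcal{D}^{2}_{t}C(t;A)x=AC(t;A)x$. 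Uniqueness follows from the same Volterra fixed-point estimate given there, now with kernel $S(t-s;A)B(s)$.

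Next I would recover the four norm bounds from those of Theorem~\ref{thm1} by inserting the closed-form evaluations recorded immediately above the statement, namely $E_{2}(MK_{t}s^{2})=\cosh(\sqrt{MK_{t}}\,s)$ and $sE_{2,2}(MK_{t}s^{2})=\tfrac{1}{\sqrt{MK_{t}}}\sinh(\sqrt{MK_{t}}\,s)$, which turn the fractional Mittag--Leffler estimates directly into the claimed hyperbolic ones.

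The only genuinely new point—and the step I expect to require the most care—is the passage from $t\in\mathbb{R_{+}}$ to $t\in\mathbb{R}$, since the fractional families of Theorem~\ref{thm1} are defined only for non-negative time. Here I would use that the classical cosine family is even and the sine family odd, $C(-t;A)=C(t;A)$ and $S(-t;A)=-S(t;A)$; extending the iterated operators $C_{n},S_{n}$ by the same parity and replacing $s$ by $|s|$ in every exponential and hyperbolic majorant (with $\cosh$ and the exponential even, and the odd $\sinh$ already carrying the absolute value displayed in the statement) yields the bounds for all $s\in[0,t]$ with $t\in\mathbb{R}$, whereupon the solution formula \eqref{usol-} holds on all of $\mathbb{R}$. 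This completes the reduction.
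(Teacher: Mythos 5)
Your overall route is exactly the paper's: the paper gives no separate proof of Theorem \ref{thm1-c} at all, but presents it as the $\alpha=2$ specialization of Theorems \ref{thm1} and \ref{thm3.2}, using precisely the two facts you isolate, namely that $g_{1}\equiv 1$ forces $T_{2}(t;A)=S_{2}(t;A)=S(t;A)$ so the recursions \eqref{Calphan}--\eqref{Salphan} become \eqref{Calphan-c}--\eqref{Salphan-c}, and that $E_{2}(z^{2})=\cosh z$, $tE_{2,2}(MK_{t}t^{2})=(MK_{t})^{-1/2}\sinh(\sqrt{MK_{t}}\,t)$ convert the Mittag--Leffler majorants into the stated hyperbolic ones. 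On the part of the argument that the paper leaves implicit you are more careful than the paper, which is to your credit.

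The one step you flag as new — the passage from $\mathbb{R_{+}}$ to $\mathbb{R}$ — is also the one place your mechanism is wrong. Extending the iterates ``by parity,'' i.e.\ setting $C_{n}(-t;A)x\coloneqq C_{n}(t;A)x$ and $S_{n}(-t;A)x\coloneqq -S_{n}(t;A)x$, does not produce solutions of \eqref{clas} for $t<0$ when $B$ is genuinely time-dependent: a direct substitution $s\mapsto -\sigma$ in \eqref{Calphan-c} gives
\begin{equation*}
C_{1}(-t;A)x=\int\limits_{0}^{t}S(t-\sigma;A)B(-\sigma)C(\sigma;A)x\,\mathrm{d}\sigma ,
\end{equation*}
which equals $C_{1}(t;A)x$ only if $B$ is an even function; the perturbed cosine family is not even in general, and a parity-extended object would satisfy the equation with $B(-t)$ rather than $B(t)$ on the negative half-line. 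The fix is simpler than what you propose: since $C(\cdot;A)$ and $S(\cdot;A)$ are defined on all of $\mathbb{R}$, the integral recursions \eqref{Calphan-c}--\eqref{Salphan-c} and the iterates $w_{n}$ are already well defined for $t<0$ as oriented integrals from $0$ to $t$, and all estimates of Theorems \ref{thm1} and \ref{thm3.2} go through verbatim with $t$ replaced by $|t|$ in the exponential and polynomial majorants and with $K_{t}$, $N_{t}$ taken as suprema over the segment between $0$ and $t$. No parity claim is needed, and none should be made. With that correction your reduction is complete and matches the paper's intent.
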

In the special case, if $B(t)\coloneqq B \in \mathcal{L}(\mathbb{X})$ in $\mathbb{R}$, a particular solution of \eqref{clas} can be represented by as below:
\begin{align}
&w(t)=\int\limits_{0}^{t}S(t-s;A+B)f(s)\mathrm{d}s.
\end{align}

Note that the homogeneous case of \eqref{clas} have been considered by Lutz in \cite{lutz} and we have added a particular solution to \eqref{clas} as a consequence of the above results in fractional-order sense.

\textbf{Case 2.} If we consider $A$ and $B$ are bounded linear operators, then two subcases arise in this direction. In this case, for a bounded system operator $\mathscr{D}(A)$ coincide with the state space $\mathbb{X}$.

\textit{Case 2a: Non-permutable case} $\left( AB\neq BA\right) $.
It is known that fractional cosine, sine and Riemann-Liouville families can also be characterized via the Laplace transform of their generators.

It  should be note that the following lemma will be helpful to derive a solution to \eqref{ABeq}.

\begin{lemma}\label{non}
	For $A,B\in \mathcal{L}(\mathbb{X})$ satisfying $AB\neq BA$, we have:
	\begin{align*}
	\mathscr{L}^{-1}\Biggl\{ \lambda^{\gamma}\left[ (\lambda^{\alpha}I-A)^{-1}B\right]^{k} (\lambda^{\alpha}I-A)^{-1}\Biggr\}(t)=\sum_{m=0}^{\infty}\frac{\mathcal{Q}^{A,B}_{k,m}}{\Gamma(k\alpha+m\alpha+\alpha-\gamma)}t^{k\alpha+m\alpha+\alpha-\gamma-1}, \quad k\in \mathbb{N}_{0},
	\end{align*}
 where 
$\mathcal{Q}^{A,B}_{k,m}\in \mathcal{L}(\mathbb{X})$ is defined by 
\begin{align}
	\mathcal{Q}^{A,B}_{k,m}=\sum_{l=0}^{k}A^{k-l}
	B\mathcal{Q}^{A,B}_{l,m-1}, k,m \in \mathbb{N}, \quad \mathcal{Q}^{A,B}_{k,0}=A^{k}, k \in \mathbb{N}_{0}, \quad \mathcal{Q}^{A,B}_{0,m}=B^{m}, m\in \mathbb{N}_{0}.
\end{align}
\end{lemma}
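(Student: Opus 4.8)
The plan is to prove the stated inversion by expanding every resolvent into a Neumann series, resumming the resulting operator-valued power series in $\lambda^{-\alpha}$, and then inverting term by term via the elementary rule $\mathscr{L}\{g_{\beta}\}(\lambda)=\lambda^{-\beta}$ for the kernel $g_{\beta}(t)=t^{\beta-1}/\Gamma(\beta)$ of Definition \ref{RLI}. Since $A\in\mathcal{L}(\mathbb{X})$ is bounded, for every $\lambda$ with $\mathrm{Re}(\lambda)$ so large that $|\lambda|^{\alpha}>\norm{A}$ the expansion
\[
(\lambda^{\alpha}I-A)^{-1}=\sum_{j=0}^{\infty}\lambda^{-\alpha(j+1)}A^{j}
\]
converges absolutely in $\mathcal{L}(\mathbb{X})$. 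Such $\lambda$ fill a right half-plane, and this suffices because the assertion is an equality of Laplace transforms, whose originals are determined uniquely.

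Next I would insert this expansion into each of the $k+1$ resolvent factors of $\lambda^{\gamma}[(\lambda^{\alpha}I-A)^{-1}B]^{k}(\lambda^{\alpha}I-A)^{-1}$ and multiply the $k+1$ absolutely convergent series. Collecting all contributions whose total exponent of $A$ equals $m$, the scalar part becomes $\lambda^{\gamma-\alpha(m+k+1)}=\lambda^{-(m\alpha+k\alpha+\alpha-\gamma)}$, while the operator part is the sum of all noncommutative words $A^{j_{1}}BA^{j_{2}}B\cdots BA^{j_{k+1}}$ with $j_{1}+\cdots+j_{k+1}=m$, i.e.\ the sum of all products built from $k$ factors $B$ and $m$ factors $A$. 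The key combinatorial step is to identify this word-sum with the recursively defined operator $\mathcal{Q}^{A,B}$ of the statement, whose first subscript records the number of factors $A$ and whose second records the number of factors $B$: splitting each word at the block $A^{j_{1}}$ of factors $A$ lying to the \emph{left} of its first factor $B$ reproduces exactly the stated two-index recursion, with the boundary cases supplied by the words containing no $B$ (giving $A^{m}=\mathcal{Q}^{A,B}_{m,0}$) and the words containing no $A$ (giving $B^{k}=\mathcal{Q}^{A,B}_{0,k}$); an induction on the number of factors $B$ then identifies the coefficient of total $A$-degree $m$ with $\mathcal{Q}^{A,B}_{m,k}$. Applying $\mathscr{L}^{-1}$ to each term through $\mathscr{L}^{-1}\{\lambda^{-(m\alpha+k\alpha+\alpha-\gamma)}\}(t)=g_{m\alpha+k\alpha+\alpha-\gamma}(t)$ then reproduces precisely the series on the right-hand side.

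The step that requires genuine care, and which I expect to be the main obstacle, is the justification of the term-by-term inversion, that is, the interchange of the infinite summation with $\mathscr{L}^{-1}$. I would control it through the operator-norm estimate $\norm{\mathcal{Q}^{A,B}_{m,k}}\le\binom{m+k}{m}\norm{A}^{m}\norm{B}^{k}$, which simply counts the words and bounds each factor. Combined with the growth of $\Gamma(m\alpha+k\alpha+\alpha-\gamma)$ in the denominators (all exponents being positive for the relevant range $\gamma<\alpha$, which covers the values $\gamma\in\{0,\alpha-1,\alpha-2\}$ arising from the resolvent representations of $T_{\alpha}$, $C_{\alpha}$ and $S_{\alpha}$), this majorizes the candidate series by a convergent Mittag-Leffler-type series, so it converges locally uniformly in $t$ and defines an entire, exponentially bounded $\mathcal{L}(\mathbb{X})$-valued function. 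Taking the Laplace transform of that function termwise and resumming recovers the original expression on the half-plane $|\lambda|^{\alpha}>\norm{A}$, whence uniqueness of the Laplace transform upgrades the computation to the claimed identity. The only remaining work is the inductive verification of the recursion for $\mathcal{Q}^{A,B}$, which is routine once the first-$B$ decomposition is in place.
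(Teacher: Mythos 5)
The paper does not actually prove this lemma: the ``proof'' consists of a citation to Lemma 3.2 of the authors' own preprint (with $\beta=0$) and the remark that it can be shown by induction. Your self-contained argument --- Neumann expansion of each resolvent for $|\lambda|^{\alpha}>\norm{A}$, multiplication of the $k+1$ absolutely convergent series, identification of the coefficient of $\lambda^{\gamma-\alpha(k+m+1)}$ with the sum of all noncommutative words in $m$ letters $A$ and $k$ letters $B$ via the ``first block of $A$'s'' decomposition, and termwise inversion justified by $\norm{\mathcal{Q}^{A,B}_{m,k}}\le\binom{m+k}{m}\norm{A}^{m}\norm{B}^{k}$ plus uniqueness of the Laplace transform --- is sound in every step and is exactly the argument the citation is standing in for.

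There is, however, one point you must address rather than pass over: what your computation establishes is
\begin{equation*}
\mathscr{L}^{-1}\Biggl\{ \lambda^{\gamma}\left[ (\lambda^{\alpha}I-A)^{-1}B\right]^{k} (\lambda^{\alpha}I-A)^{-1}\Biggr\}(t)=\sum_{m=0}^{\infty}\frac{\mathcal{Q}^{A,B}_{m,k}}{\Gamma(k\alpha+m\alpha+\alpha-\gamma)}\,t^{k\alpha+m\alpha+\alpha-\gamma-1},
\end{equation*}
with the subscripts of $\mathcal{Q}$ transposed relative to the printed statement, and you write $\mathcal{Q}^{A,B}_{m,k}$ in your conclusion without remarking that this is not what the lemma says. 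The two are genuinely different: by the recursion, the first subscript of $\mathcal{Q}^{A,B}_{k,m}$ counts factors of $A$ and the second counts factors of $B$ (e.g.\ $\mathcal{Q}^{A,B}_{1,2}=AB^{2}+BAB+B^{2}A$), whereas the left-hand side contains exactly $k$ factors of $B$, so its expansion can only produce words with $k$ letters $B$. The statement as printed is therefore false as written --- already for $k=0$ it would assign to $\mathscr{L}^{-1}\{\lambda^{\gamma}(\lambda^{\alpha}I-A)^{-1}\}$ the series $\sum_{m}B^{m}g_{m\alpha+\alpha-\gamma}(t)$, which involves $B$ although the transform does not. So your proof is correct, but it proves the corrected lemma (with $\mathcal{Q}^{A,B}_{m,k}$, equivalently $\mathcal{Q}^{B,A}_{k,m}$), and a careful write-up should say so explicitly. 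The discrepancy is harmless for the paper's subsequent applications, since there the indices are summed over all pairs with $k+m=n$ and $\sum_{k+m=n}\mathcal{Q}^{A,B}_{m,k}=\sum_{k+m=n}\mathcal{Q}^{A,B}_{k,m}$ by reindexing.
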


\begin{proof}
This lemma is a special case of Lemma 3.2 in  \cite{arxiv,authorea} with $\beta=0$ which can be proved via mathematical induction principle. So, the proof of this lemma is omitted here.	
\end{proof}

 Then, we apply Laplace transform method and solve the equation \eqref{ABeq} with bounded linear operators $A,B$. After solving the equation with respect to $U(\lambda)$, we obtain

\begin{align}\label{ulap}
U(\lambda)=\lambda^{\alpha-1}(\lambda^{\alpha}I-A-B)^{-1}x+\lambda^{\alpha-2}(\lambda^{\alpha}I-A-B)^{-1}y+(\lambda^{\alpha}I-A-B)^{-1}F(\lambda),
\end{align}
where $U(\lambda)=\left( \mathscr{L} u\right) (\lambda)$ and $F(\lambda)=\left( \mathscr{L} f\right) (\lambda)$.

Then for non-permutable linear operators $A,B\in \mathcal{L}(\mathbb{X})$ and sufficiently large $\lambda$, we have

\begin{align*}
(\lambda^{\alpha}I-A-B)^{-1}&=\Big[ (\lambda^{\alpha}I-A)(I-(\lambda^{\alpha}I-A)^{-1}B)\Big]^{-1}\\
&=(I-(\lambda^{\alpha}I-A)^{-1}B)^{-1}(\lambda^{\alpha}I-A)^{-1}\\
&=\sum_{k=0}^{\infty}\left[ (\lambda^{\alpha}I-A)^{-1}B\right]^{k} (\lambda^{\alpha}I-A)^{-1},
\end{align*}
where $\|(\lambda^{\alpha}I-A)^{-1}B\|<I$.

Taking inverse Laplace transform of \eqref{ulap}, we have 

\begin{align*}
u(t)&=\mathscr{L}^{-1}\Biggl\{ \sum_{k=0}^{\infty}\lambda^{\alpha-1}\left[ (\lambda^{\alpha}I-A)^{-1}B\right]^{k} (\lambda^{\alpha}I-A)^{-1}\Biggr\}(t)x\\
&+\mathscr{L}^{-1}\Biggl\{\sum_{k=0}^{\infty}\lambda^{\alpha-2}\left[ (\lambda^{\alpha}I-A)^{-1}B\right]^{k} (\lambda^{\alpha}I-A)^{-1}\Biggr\}(t)y\\
&+\mathscr{L}^{-1}\Biggl\{\sum_{k=0}^{\infty}\left[ (\lambda^{\alpha}I-A)^{-1}B\right]^{k} (\lambda^{\alpha}I-A)^{-1}F(\lambda)\Biggr\}(t)
\end{align*}
According to the Lemma \ref{non}, we derive the explicit analytical representation of the solution $u(t)$ to \eqref{ABeq} with  linear bounded operators $A$ and $B$ as follows:

\begin{align}\label{uu}
u(t)&=\sum_{n=0}^{\infty}\sum_{\substack{k+m=n\\ k,m \geq 0}}\mathcal{Q}^{A,B}_{k,m}\frac{t^{n\alpha}}{\Gamma(n\alpha+1)}x\nonumber\\
&+\sum_{n=0}^{\infty}\sum_{\substack{k+m=n\\ k,m \geq 0}}\mathcal{Q}^{A,B}_{k,m}\frac{t^{n\alpha+1}}{\Gamma(n\alpha+2)}y\\
&+\int\limits_{0}^{t}\sum_{n=0}^{\infty}\sum_{\substack{k+m=n\\ k,m \geq 0}}\mathcal{Q}^{A,B}_{k,m}\frac{(t-s)^{n\alpha+\alpha-1}}{\Gamma(n\alpha+\alpha)}f(s)\mathrm{d}s. \nonumber
\end{align}
Thus, we proved the following theorem:
\begin{theorem}
	Let $A,B\in \mathcal{L}(\mathbb{X})$ with non-zero commutator, i.e., $[A, B]= AB-BA\neq 0$. Then the closed-form of a solution $u\in \mathbb{X}$ of the abstract Cauchy problem \eqref{ABeq} can be expressed as 
	\begin{align*}
		u(t)=\sum_{n=0}^{\infty}\sum_{\substack{k+m=n\\ k,m \geq 0}}\mathcal{Q}^{A,B}_{k,m}\frac{t^{n\alpha}}{\Gamma(n\alpha+1)}x&+\sum_{n=0}^{\infty}\sum_{\substack{k+m=n\\ k,m \geq 0}}\mathcal{Q}^{A,B}_{k,m}\frac{t^{n\alpha+1}}{\Gamma(n\alpha+2)}y\\
		&+\int\limits_{0}^{t}\sum_{n=0}^{\infty}\sum_{\substack{k+m=n\\ k,m \geq 0}}\mathcal{Q}^{A,B}_{k,m}\frac{(t-s)^{n\alpha+\alpha-1}}{\Gamma(n\alpha+\alpha)}f(s)\mathrm{d}s. 
	\end{align*}
\end{theorem}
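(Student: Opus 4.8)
The plan is to solve \eqref{ABeq} directly by the Laplace transform method, organizing the computation so that Lemma \ref{non} carries the analytic burden. First I would apply the Laplace transform to both sides of \eqref{ABeq}. Writing $U(\lambda)=(\mathscr{L}u)(\lambda)$ and $F(\lambda)=(\mathscr{L}f)(\lambda)$ and invoking the Caputo transform rule \eqref{lapCaputo} specialized to $\alpha\in(1,2]$, the differential equation becomes the algebraic identity
\[
\lambda^{\alpha}U(\lambda)-\lambda^{\alpha-1}x-\lambda^{\alpha-2}y=(A+B)U(\lambda)+F(\lambda).
\]
Solving for $U(\lambda)$ produces \eqref{ulap}, in which the perturbed resolvent $(\lambda^{\alpha}I-A-B)^{-1}$ is applied to $\lambda^{\alpha-1}x$, to $\lambda^{\alpha-2}y$, and to $F(\lambda)$.

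Next I would expand this resolvent as a Neumann series. Factoring $\lambda^{\alpha}I-A-B=(\lambda^{\alpha}I-A)\bigl(I-(\lambda^{\alpha}I-A)^{-1}B\bigr)$ and observing that for $Re(\lambda)$ large enough one has $\|(\lambda^{\alpha}I-A)^{-1}B\|<1$, the geometric expansion gives
\[
(\lambda^{\alpha}I-A-B)^{-1}=\sum_{k=0}^{\infty}\bigl[(\lambda^{\alpha}I-A)^{-1}B\bigr]^{k}(\lambda^{\alpha}I-A)^{-1},
\]
converging in the operator norm. Substituting this into \eqref{ulap} writes $U(\lambda)$ as three sums of terms of the precise shape $\lambda^{\gamma}\bigl[(\lambda^{\alpha}I-A)^{-1}B\bigr]^{k}(\lambda^{\alpha}I-A)^{-1}$ treated in Lemma \ref{non}, with $\gamma=\alpha-1$ for the $x$-term, $\gamma=\alpha-2$ for the $y$-term, and $\gamma=0$ for the forcing term.

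I would then invert term by term. Applying Lemma \ref{non} with each value of $\gamma$ yields, for fixed $k$ and summation index $m$, kernels proportional to $\mathcal{Q}^{A,B}_{k,m}\,t^{\,k\alpha+m\alpha+\alpha-\gamma-1}/\Gamma(k\alpha+m\alpha+\alpha-\gamma)$. The choices $\gamma=\alpha-1,\alpha-2,0$ collapse the exponent $k\alpha+m\alpha+\alpha-\gamma-1$ to $(k+m)\alpha$, to $(k+m)\alpha+1$, and to $(k+m)\alpha+\alpha-1$, with matching Gamma arguments $(k+m)\alpha+1$, $(k+m)\alpha+2$, $(k+m)\alpha+\alpha$. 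Finally I would reindex the double series by setting $n=k+m$, turning $\sum_{k}\sum_{m}$ into $\sum_{n=0}^{\infty}\sum_{k+m=n}$ and recovering exactly the three stated series; the last one appears under the convolution $\int_{0}^{t}(\,\cdot\,)f(s)\,\mathrm{d}s$ because multiplication by $F(\lambda)$ in the transform domain corresponds to convolution with $f$ in the time domain.

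The main obstacle I anticipate is the rigorous justification of the interchanges rather than the algebra: that the Neumann series converges uniformly on a half-plane $Re(\lambda)>\omega_{0}$, that term-by-term inverse Laplace transformation is legitimate, and that the resulting double series may be reordered into the diagonal form $\sum_{n}\sum_{k+m=n}$. All three follow from absolute convergence of $\sum_{k,m}\|\mathcal{Q}^{A,B}_{k,m}\|\,t^{(k+m)\alpha+\cdots}$, which is controlled by a bound of the form $\|\mathcal{Q}^{A,B}_{k,m}\|\le\binom{k+m}{k}\|A\|^{k}\|B\|^{m}$ read off from the recursion defining $\mathcal{Q}^{A,B}_{k,m}$; this renders each majorizing series a convergent Mittag-Leffler-type series. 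Once these estimates are in place the formal manipulations become rigorous, and since uniqueness has already been secured by the argument in Theorem \ref{thm3.2}, the Laplace-transform candidate is indeed the solution of \eqref{ABeq}.
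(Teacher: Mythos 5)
Your proposal follows essentially the same route as the paper: Laplace-transform the equation to obtain \eqref{ulap}, expand $(\lambda^{\alpha}I-A-B)^{-1}$ as the Neumann series $\sum_{k}\bigl[(\lambda^{\alpha}I-A)^{-1}B\bigr]^{k}(\lambda^{\alpha}I-A)^{-1}$, invert term by term via Lemma \ref{non} with $\gamma=\alpha-1,\ \alpha-2,\ 0$, and reindex diagonally by $n=k+m$. Your added remarks on justifying the interchanges via the bound $\|\mathcal{Q}^{A,B}_{k,m}\|\le\binom{k+m}{k}\|A\|^{k}\|B\|^{m}$ go slightly beyond what the paper writes out, but the argument is the same.
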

\textit{Case 2b: Permutable case} $\left( AB= BA\right) $. This case is obtained directly from \eqref{uu} using the following identity for $\mathcal{Q}^{A,B}_{k,m}$ \cite{arxiv,authorea}:
\begin{equation*}
\mathcal{Q}^{A,B}_{k,m}=\binom{k+m}{m}A^{k}B^{m},
\end{equation*}
 and the eminent Cauchy product formula, we acquire 
\allowdisplaybreaks
\begin{align*}
u(t)&=\sum_{k=0}^{\infty}\sum_{m=0}^{\infty}\binom{k+m}{m}A^{k}B^{m}\frac{t^{(k+m)\alpha}}{\Gamma((k+m)\alpha+1)}x\nonumber\\
&+\sum_{k=0}^{\infty}\sum_{m=0}^{\infty}\binom{k+m}{m}A^{k}B^{m}\frac{t^{(k+m)\alpha+1}}{\Gamma((k+m)\alpha+2)}y\\
&+\int\limits_{0}^{t}\sum_{k=0}^{\infty}\sum_{m=0}^{\infty}\binom{k+m}{m}A^{k}B^{m}\frac{(t-s)^{(k+m)\alpha+\alpha-1}}{\Gamma((k+m)\alpha+\alpha)}f(s)\mathrm{d}s\nonumber\\
&=\sum_{k=0}^{\infty}\sum_{m=0}^{k}\binom{k}{m}A^{k-m}B^{m}\frac{t^{k\alpha}}{\Gamma(k\alpha+1)}x\nonumber\\
&+\sum_{k=0}^{\infty}\sum_{m=0}^{k}\binom{k}{m}A^{k-m}B^{m}\frac{t^{k\alpha+1}}{\Gamma(k\alpha+2)}y\\
&+\int\limits_{0}^{t}\sum_{k=0}^{\infty}\sum_{m=0}^{k}\binom{k}{m}A^{k-m}B^{m}\frac{(t-s)^{k\alpha+\alpha-1}}{\Gamma(k\alpha+\alpha)}f(s)\mathrm{d}s.\nonumber
\end{align*}
Now, we apply the well-known binomial theorem for permutable linear bounded operators $A,B\in \mathcal{L}(\mathbb{X})$:
\begin{equation*}
\sum_{m=0}^{k}\binom{k}{m}A^{k-m}B^{m}=(A+B)^{k}.
\end{equation*}
Then we acquire an explicit analytical representation of the solution to \eqref{ABeq} with the help of Mittag-Leffler type functions generated by linear bounded operators:

\begin{align*}
u(t)=E_{\alpha,1}((A+B)t^{\alpha})x+tE_{\alpha,2}((A+B)t^{\alpha})y+\int\limits_{0}^{t}(t-s)^{\alpha-1}E_{\alpha,\alpha}((A+B)(t-s)^{\alpha})f(s)\mathrm{d}s.
\end{align*}
Therefore, we proved the following theorem.
\begin{theorem}\label{thm2}
	Let $A,B \in \mathcal{L}(\mathbb{X})$ with zero commutator, i.e., $[A, B]=AB-BA= 0$. A classical solution $u\in \mathbb{X}$ of the abstract initial value problem \eqref{ABeq} can be expressed as
	
	\begin{align*}
	u(t)=E_{\alpha,1}((A+B)t^{\alpha})x+tE_{\alpha,2}((A+B)t^{\alpha})y+t^{\alpha-1}E_{\alpha,\alpha}((A+B)t^{\alpha})\ast f(t).
	\end{align*}
\end{theorem}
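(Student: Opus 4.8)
The plan is to obtain this commutative formula as a specialization of the non-permutable representation \eqref{uu} already established in Case 2a, rather than re-solving \eqref{ABeq} from scratch. The entry point is the closed form $\mathcal{Q}^{A,B}_{k,m}=\binom{k+m}{m}A^{k}B^{m}$ recorded in the Case 2b preamble (from \cite{arxiv,authorea}): this is the single place where commutativity is genuinely used, since without $AB=BA$ the nested recursion defining $\mathcal{Q}^{A,B}_{k,m}$ does not collapse to a product of powers. Substituting this into each of the three series in \eqref{uu} is the first step.

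Concretely, for the coefficient of $x$ I would rewrite $\sum_{n}\sum_{k+m=n}\mathcal{Q}^{A,B}_{k,m}\,t^{n\alpha}/\Gamma(n\alpha+1)$ as the double series $\sum_{k,m\geq 0}\binom{k+m}{m}A^{k}B^{m}\,t^{(k+m)\alpha}/\Gamma((k+m)\alpha+1)$, then regroup by the value of $k+m$ (a Cauchy-product reindexing) and apply the binomial theorem $\sum_{m=0}^{k}\binom{k}{m}A^{k-m}B^{m}=(A+B)^{k}$, valid precisely because $A,B$ commute. This turns the sum into $\sum_{k=0}^{\infty}(A+B)^{k}t^{k\alpha}/\Gamma(k\alpha+1)=E_{\alpha,1}((A+B)t^{\alpha})x$ by Definition \eqref{ML-2} with the bounded operator $(A+B)$ as argument. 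The identical reindexing applied to the $y$-series and to the integrand of the inhomogeneous term produces $tE_{\alpha,2}((A+B)t^{\alpha})y$ and the convolution $t^{\alpha-1}E_{\alpha,\alpha}((A+B)t^{\alpha})\ast f(t)$, respectively, which assembles into the claimed $u(t)$.

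The main obstacle, though ultimately routine, is justifying the rearrangements: the passage from diagonal summation over $k+m=n$ to the free double sum over $(k,m)$, and the subsequent Cauchy-product regrouping, both require absolute convergence in the operator norm of $\mathcal{L}(\mathbb{X})$. I would secure this via the bound $\|\mathcal{Q}^{A,B}_{k,m}\|\leq\binom{k+m}{m}\|A\|^{k}\|B\|^{m}$, under which each of the three series is dominated by the scalar series $E_{\alpha,\beta}((\|A\|+\|B\|)t^{\alpha})$ (with $\beta=1,2,\alpha$); since $A,B\in\mathcal{L}(\mathbb{X})$ these majorants are entire in $t$, so every rearrangement is legitimate for all $t\geq 0$. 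Finally, to confirm $u$ is a genuine classical solution of \eqref{ABeq} with $u(0)=x$, $u^{\prime}(0)=y$, I would either differentiate the Mittag-Leffler series termwise using the fractional-derivative identities for $t^{\beta-1}E_{\alpha,\beta}(Ct^{\alpha})$ with $C=A+B$, or—more economically—invoke the Laplace-transform identities $\mathscr{L}\{E_{\alpha,1}((A+B)t^{\alpha})\}(\lambda)=\lambda^{\alpha-1}(\lambda^{\alpha}I-(A+B))^{-1}$, $\mathscr{L}\{tE_{\alpha,2}((A+B)t^{\alpha})\}(\lambda)=\lambda^{\alpha-2}(\lambda^{\alpha}I-(A+B))^{-1}$, and $\mathscr{L}\{t^{\alpha-1}E_{\alpha,\alpha}((A+B)t^{\alpha})\}(\lambda)=(\lambda^{\alpha}I-(A+B))^{-1}$, which reproduce exactly the transform $U(\lambda)$ obtained by solving \eqref{ABeq} in the Laplace domain with $A+B$ in place of $A$.
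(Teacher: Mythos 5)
Your proposal is correct and follows essentially the same route as the paper: specialize the non-permutable representation via $\mathcal{Q}^{A,B}_{k,m}=\binom{k+m}{m}A^{k}B^{m}$, reindex the double series by the Cauchy product, and apply the binomial theorem $\sum_{m=0}^{k}\binom{k}{m}A^{k-m}B^{m}=(A+B)^{k}$ to recognize the Mittag-Leffler functions. Your added remarks on absolute convergence of the operator series and the Laplace-transform verification are sound supplements that the paper leaves implicit.
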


\begin{remark}
	 A classical case with non-permutable and permutable linear bounded operators can be derived as a result of above Case 2a and 2b. Therefore, we consider the following abstract Cauchy problem:
	 \begin{equation}\label{class}
	 	\begin{cases}
	 		u^{\prime \prime}(t)= \left( A+B\right) u(t) + f(t), \quad t\in \mathbb{R},\\
	 		u(0)=x,\quad u^{\prime}(0)=y.
	 	\end{cases}
	 \end{equation}
\end{remark}
\begin{theorem}
	Let $A,B\in \mathcal{L}(\mathbb{X})$ with non-zero commutator, i.e., $[A, B]= AB-BA\neq 0$. Then the closed-form of a solution $u\in \mathbb{X}$ of the abstract Cauchy problem \eqref{class} can be expressed as 
	\begin{align*}
		u(t)=\sum_{n=0}^{\infty}\sum_{\substack{k+m=n\\ k,m \geq 0}}\mathcal{Q}^{A,B}_{k,m}\frac{t^{2n}}{(2n)!}x&+\sum_{n=0}^{\infty}\sum_{\substack{k+m=n\\ k,m \geq 0}}\mathcal{Q}^{A,B}_{k,m}\frac{t^{2n+1}}{(2n+1)!}y\\
		&+\int\limits_{0}^{t}\sum_{n=0}^{\infty}\sum_{\substack{k+m=n\\ k,m \geq 0}}\mathcal{Q}^{A,B}_{k,m}\frac{(t-s)^{2n+1}}{(2n+1)!}f(s)\mathrm{d}s. 
	\end{align*}
\end{theorem}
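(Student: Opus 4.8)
The plan is to recognize that the second-order problem \eqref{class} is nothing but the classical instance $\alpha=2$ of the fractional Cauchy problem \eqref{ABeq}: when $\alpha=2$ the Caputo derivative $\prescript{C}{}{\mathcal{D}^{2}_{t}}u$ reduces to the ordinary second derivative $u''$, while the initial data $u(0)=x$, $u'(0)=y$ are unchanged. Consequently I would obtain the asserted representation by substituting $\alpha=2$ directly into the solution formula \eqref{uu} already established in Case 2a, and no genuinely new argument is required.

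Concretely, first I would take the Laplace transform of \eqref{class}, using the rule \eqref{lapCaputo} with $n=2$, to get $\lambda^{2}U(\lambda)-\lambda x-y=(A+B)U(\lambda)+F(\lambda)$, whence $U(\lambda)=\lambda(\lambda^{2}I-A-B)^{-1}x+(\lambda^{2}I-A-B)^{-1}y+(\lambda^{2}I-A-B)^{-1}F(\lambda)$, which is exactly \eqref{ulap} at $\alpha=2$. Next, for $\lambda$ large enough that $\norm{(\lambda^{2}I-A)^{-1}B}<1$, I would expand the resolvent of $A+B$ in the Neumann-type series $(\lambda^{2}I-A-B)^{-1}=\sum_{k=0}^{\infty}\left[(\lambda^{2}I-A)^{-1}B\right]^{k}(\lambda^{2}I-A)^{-1}$, precisely as in Case 2a.

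Then the only computational step is term-by-term inversion. Applying Lemma \ref{non} with $\alpha=2$ and with the three exponents $\gamma\in\{\alpha-1,\alpha-2,0\}=\{1,0,0\}$ coming from the factors $\lambda^{\alpha-1}$, $\lambda^{\alpha-2}$ and $\lambda^{0}$ in \eqref{ulap}, I would recover the three double series with coefficients $\mathcal{Q}^{A,B}_{k,m}$. The Gamma factors then collapse to factorials, namely $\Gamma(n\alpha+1)=\Gamma(2n+1)=(2n)!$, $\Gamma(n\alpha+2)=\Gamma(2n+2)=(2n+1)!$ and $\Gamma(n\alpha+\alpha)=\Gamma(2n+2)=(2n+1)!$, while the powers become $t^{n\alpha}=t^{2n}$, $t^{n\alpha+1}=t^{2n+1}$ and $(t-s)^{n\alpha+\alpha-1}=(t-s)^{2n+1}$. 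Grouping the terms by the index $n=k+m$ yields exactly the claimed formula.

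The steps are all routine once Case 2a is in hand, so I do not expect a serious obstacle; the single point requiring care is the justification of the term-by-term Laplace inversion and of reorganizing the double sum by $n=k+m$, which rests on the uniform operator-norm convergence of the Neumann series for large $\lambda$ — the same estimate already invoked in Case 2a. Since Lemma \ref{non} is stated for arbitrary order and in particular holds at $\alpha=2$, no fresh bound is needed, and the proof reduces to the specialization outlined above.
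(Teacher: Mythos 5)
Your proposal is correct and follows exactly the paper's route: the paper derives this theorem as an immediate consequence of Case 2a by setting $\alpha=2$ in the representation \eqref{uu}, which is precisely your Laplace-transform, Neumann-series, term-by-term inversion argument specialized to $\alpha=2$. The factorial identifications $\Gamma(2n+1)=(2n)!$, $\Gamma(2n+2)=(2n+1)!$ and the exponent substitutions you record are exactly what is needed, so nothing further is required.
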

\begin{theorem}
	Let $A,B \in \mathcal{L}(\mathbb{X})$ with zero commutator, i.e., $[A, B]=AB-BA= 0$. A classical solution $u\in \mathbb{X}$ of the abstract initial value problem \eqref{class} can be expressed as
	
	\begin{align*}
		u(t)=\cos(\sqrt{A+B}t)x+\frac{1}{\sqrt{A+B}}\sin(\sqrt{A+B}t)y+\frac{1}{\sqrt{A+B}}\sin(\sqrt{A+B}t)\ast f(t).
	\end{align*}
\end{theorem}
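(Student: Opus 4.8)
The plan is to obtain the closed form as the $\alpha=2$ specialization of the preceding permutable-case theorem. Since $A,B\in\mathcal{L}(\mathbb{X})$ commute and are bounded, $A+B\in\mathcal{L}(\mathbb{X})$ and $\mathscr{D}(A)=\mathbb{X}$, so that result already furnishes the unique classical solution of \eqref{class} in the form
\[
u(t)=E_{2,1}\big((A+B)t^{2}\big)x+t\,E_{2,2}\big((A+B)t^{2}\big)y+t\,E_{2,2}\big((A+B)t^{2}\big)\ast f(t),
\]
where each Mittag--Leffler operator function is read off from its norm-convergent power series in $A+B$. The entire task is then to rewrite these three operator functions in trigonometric/hyperbolic form, which is a purely algebraic manipulation of the defining series.

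First I would record the elementary scalar identities $E_{2,1}(z)=\cosh\sqrt{z}$ and $E_{2,2}(z)=\sinh(\sqrt{z})/\sqrt{z}$, which hold because $\sum_{k\ge0}z^{k}/(2k)!=\cosh\sqrt{z}$ and $\sum_{k\ge0}z^{k}/(2k+1)!=\sinh(\sqrt{z})/\sqrt{z}$ as identities between entire functions. Substituting the operator argument $z=(A+B)t^{2}$ gives $E_{2,1}((A+B)t^{2})=\sum_{k\ge0}(A+B)^{k}t^{2k}/(2k)!$ and $t\,E_{2,2}((A+B)t^{2})=\sum_{k\ge0}(A+B)^{k}t^{2k+1}/(2k+1)!$, which are exactly the operator cosine and sine families associated with the generator $A+B$, denoted $\cos(\sqrt{A+B}\,t)$ and $\tfrac{1}{\sqrt{A+B}}\sin(\sqrt{A+B}\,t)$ in the statement.

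The key point, and the only genuine subtlety, is the meaning of the symbols $\cos(\sqrt{A+B}\,t)$, $\sin(\sqrt{A+B}\,t)$ and $\tfrac{1}{\sqrt{A+B}}$, since $A+B$ need be neither invertible nor possess a bounded square root. I would stress that these are not literal compositions: in both series above only integer powers of $A+B$ occur, so the formal $\sqrt{A+B}$ and $(A+B)^{-1}$ cancel and the two expressions are unambiguous elements of $\mathcal{L}(\mathbb{X})$, defined directly by those even/odd series. Their norm convergence, uniformly on compact $t$-intervals together with the term-by-term first and second derivatives, follows at once from $\norm{(A+B)^{k}}\le\norm{A+B}^{k}$. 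This is the step I would state most carefully, but once the definition is fixed it is routine.

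Finally I would confirm that the resulting $u$ solves \eqref{class}: differentiation under the sum is justified by the uniform convergence just noted, evaluation at $t=0$ gives $u(0)=x$ and $u^{\prime}(0)=y$, and differentiating the odd series twice reproduces $u^{\prime\prime}=(A+B)u+f$, matching the solution already established in the fractional theorem. Equivalently, one may bypass the Mittag--Leffler route and set $\alpha=2$ directly in the Laplace-transform computation \eqref{ulap}--\eqref{uu}: with $\mathcal{Q}^{A,B}_{k,m}=\binom{k+m}{m}A^{k}B^{m}$ and the Cauchy-product collapse $\sum_{m=0}^{k}\binom{k}{m}A^{k-m}B^{m}=(A+B)^{k}$, the three series in \eqref{uu} turn into precisely the cosine and sine operator functions above, completing the proof.
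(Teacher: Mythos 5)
Your proposal is correct and follows essentially the same route as the paper, which states this theorem without a separate proof as the $\alpha=2$ specialization of the preceding permutable-case result (Theorem on $E_{\alpha,1},E_{\alpha,2},E_{\alpha,\alpha}$), converting the Mittag--Leffler operator functions into the cosine/sine operator series exactly as you do. Your explicit remark that $\cos(\sqrt{A+B}\,t)$ and $\tfrac{1}{\sqrt{A+B}}\sin(\sqrt{A+B}\,t)$ are only formal notation for the even/odd integer-power series in $A+B$ (so no square root or inverse of $A+B$ is actually needed) is a worthwhile clarification that the paper leaves implicit.
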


\end{document}